\numberwithin{equation}{section}
\title{The Siegel variance formula for quadratic forms}
\author{Naser T. Sardari}
\address{Department of Mathematics, UW-Madison, Madison, WI 53706}
\email{ntalebiz@math.wisc.edu}
\date{\today}
	\newtheorem{thm}{Theorem}[section]
	\newtheorem{prop}[thm]{Proposition}
	\newtheorem{rem}[thm]{Remark}	
	\newtheorem{lem}[thm]{Lemma}
	\newtheorem{conj}[thm]{Conjecture}
	\renewcommand{\vec}[1]{\mathbf{#1}}
	\newtheorem{cor}[thm]{Corollary}
		\def\va{\text{Var}}
\begin{document}
\maketitle
%%%%%%%%%%%%%%%%%%%%%%%%%%%%%
\begin{abstract}
We introduce a smooth variance sum associated to a pair of positive definite symmetric integral matrices $A_{m\times m}$ and $B_{n\times n}$, where $m\geq n$. By using the oscillator representation,  we give a formula for this variance sum in terms of a smooth sum over the square of a functional evaluated on the $B$-th Fourier coefficients of the vector valued holomorphic  Siegel modular forms which are Hecke eigenforms and obtained by the theta transfer from $O_{A_{m\times m}}$.  By using the Ramanujan bound on the Fourier coefficients of the holomorphic cusp  forms, we give a sharp upper bound on this variance when $n=1$. As applications,  we  prove a  cutoff phenomenon for the probability that a unimodular lattice of dimension $m$  represents  a given  even number. This gives an optimal upper bound on the sphere packing density of almost all even unimodular lattices. Furthermore, we  generalize  the result of  Bourgain, Rudnick and Sarnak~\cite{Bourgain}, and also give an optimal bound on the diophantine exponent of the $p$-integral points on any positive definite $d$-dimensional quadric, where $d\geq 3$. This improves the best known bounds due to  Ghosh, Gorodnik and Nevo~\cite{GGN} into an optimal bound.     
%
%the integral points on quadrics, 
% equidistributes in all but a tiny fraction of caps with the optimal size $N^{-\frac{d-2}{2(d-1)}+\epsilon}$ on  $ V_1(\mathbb{R})$ by assuming either (1) $d$ is even, (2) $N$ is a square, (3)   the Lindel\"of hypothesis for holomorphic modular forms. 
% 
%% Let $F(x_1,\dots,x_m)$ be a positive definite integral  quadratic form in  $m\geq 3$ variables with discriminant $D$. Suppose that $N$ is an integer where $\gcd(N,D)=1.$ Let $V_N(A)$ be the set of $\vec{x}\in A^d$ such that $F(\vec{x})=N,$ where $A$ is a commutative ring.  We prove that $\frac{1}{\sqrt{N}}V_N(\mathbb{Z}) \subset V_1(\mathbb{R})$
\end{abstract}

\tableofcontents 

\section{Introduction}

\subsection{Statement of results} In this section, we discuss two applications  of the Siegel variance formula (Theorem~\ref{Siegelthmm} for $n=1$).
 \subsubsection{The cutoff phenomenon  in large dimension} Suppose that $A_{m\times m}$ is a  positive definite symmetric integral matrix with determinant 1, and $C(A)$ denotes the genus of $A$ which is a finite set.  It is well known that $C(A)$ has only two possibilities, namely even or odd unimodular lattices. There is a natural probability measure defined by Siegel \cite{Siegel1} on $C(A)$:
\[
\mu_{s}(A_i):=\dfrac{ \frac{1}{|O_{A_i}(\mathbb{Z})|}}{\sum_{A_i\in C(A)} \frac{1}{|O_{A_i}(\mathbb{Z})|}},
\]   
where $s\in\{0,1\}$ depending on $C(A)$ being even or odd, and  $|O_{A_i}(\mathbb{Z})|$ is the size of the integral orthogonal group of $A_i.$
   The first application is on bounding the probability that an odd integer $q$ (even number $2q$)  is representable by an odd unimodular lattice (even unimodular lattice) of dimension $m$ with respect to $\mu_{s}$. Every integer (even integer) is representable over $p$-adic integers $\mathbb{Z}_p$ by an odd unimodular lattice (even unimodular lattice) of dimension $m\geq 4.$ This fact  and an application of the Hardy-Littlewood circle method  implies that every large enough integer (even integer) with respect to $m$  is representable by every odd unimodular (every even unimodular lattice).  This is a version of our theorem which shows a  cutoff phenomenon at point  $q\sim \frac{m}{2\pi e}$ ($2q\sim \frac{m}{2\pi e}$) for the probability measure $\mu_{s}$.
 \begin{thm}\label{spheredense} Let $m$ be even and  $q$ be an odd integer.  We have 
 
 \[
\mu_{1}\big(\vec{x}^{\intercal}A\vec{x}=q \text{ for some }  \vec{x} \in \mathbb{Z}^m \big)=\begin{cases} \leq 5113^{-t } & \text {  for } q\leq \frac{m}{2\pi e}+\frac{0.5}{2\pi e} \log(m)-t-1,  \\  \geq 1- 5133^{- t }   & \text {  for } q\geq \frac{m}{2\pi e}+\frac{1.6}{\pi e}\log(m) +t,   \end{cases}
\]
where $0 \leq t=o(m).$ Similarly for $\mu_0,$ we have 
\[
\mu_{0}\big(\vec{x}^{\intercal}A\vec{x}=2q \text{ for some }  \vec{x} \in \mathbb{Z}^m \big)=\begin{cases} \leq 5113^{-t } & \text {  for } 2q\leq \frac{m}{2\pi e}+\frac{0.5}{\pi e}\log(m) -t-1,  \\  \geq 1- 5113^{-t }   & \text {  for } 2q\geq \frac{m}{2\pi e}+\frac{2.6}{\pi e}\log(m) +t,   \end{cases}
\]
where $0 \leq t=o(m).$
Note that $5113=\lfloor e^{\pi e}\rfloor.$
 \end{thm}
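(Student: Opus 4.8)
My strategy is to deduce Theorem~\ref{spheredense} from the Siegel variance formula (Theorem~\ref{Siegelthmm} with $n=1$) by a second-moment argument. First I would set up the basic mean–variance dichotomy. Let $r_{A_i}(q)$ denote the number of representations of $q$ by $A_i$, and consider the Siegel-weighted average $E(q) := \sum_{A_i\in C(A)} \mu_s(A_i) r_{A_i}(q)$; by Siegel's mass formula this equals (up to the archimedean and $p$-adic local densities) the singular series times the singular integral, and the point is that for $q$ around $\tfrac{m}{2\pi e}$ the archimedean density $\sim \operatorname{vol}(\{x: x^\intercal A x = q\})$ behaves like $e^{-2\pi q}\cdot(2\pi)^{m/2} q^{m/2-1}/\Gamma(m/2)$, whose logarithm, via Stirling, crosses zero exactly at $q \approx \tfrac{m}{2\pi e}$. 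So the threshold in the theorem is precisely the location where $E(q)$ transitions from exponentially large to exponentially small in $m$. Above the threshold, once $E(q)\to\infty$ fast enough, I would use the variance bound to show $r_{A_i}(q)>0$ for $\mu_s$-almost every $i$ (Chebyshev: the measure of the bad set where $r_{A_i}(q)=0$ is at most $\va/E(q)^2$). Below the threshold, I would use the first moment directly: $\mu_s(r_{A_i}(q)>0) \le E(q)$ since $r_{A_i}(q)$ is a nonnegative integer, so $\mu_s(r_{A_i}(q)\geq 1)\leq E(q)$, and one reads off the constant $5113 = \lfloor e^{\pi e}\rfloor$ from the exponential rate $e^{\pi e}$ per unit decrease in $q$.

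The heart of the matter is the upper bound on the variance. Here I would invoke the Siegel variance formula, which expresses $\va$ as a smooth sum over the squares of functionals on the $B$-th (here $B=(q)$, $n=1$) Fourier coefficients of the Hecke-eigen Siegel modular forms obtained by theta transfer from $O_A$. Applying the Ramanujan bound for the Fourier coefficients of holomorphic cusp forms of weight $m/2$ (Deligne, via the fact that $m/2$ is an integer and these are classical elliptic or Siegel forms in the relevant range — when $n=1$ they are classical holomorphic cusp forms of weight $m/2$), each such Fourier coefficient of a normalized newform of weight $k=m/2$ is bounded by $d(q) q^{(k-1)/2} = d(q) q^{(m-2)/4}$. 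Squaring and summing against the smooth weight, the variance is therefore bounded by something of size roughly $q^{(m-2)/2}$ times the square of the relevant normalizing theta-period, up to $q^\epsilon$ and polynomial-in-$m$ factors. Comparing this with $E(q)^2$, which near the threshold is of size $(e^{\pi e})^{2(\tfrac{m}{2\pi e}-q)}$ times polynomial corrections, gives $\va/E(q)^2 \le 5113^{-t}$ in the stated range once $q$ exceeds the threshold by the logarithmic margin $\tfrac{1.6}{\pi e}\log m$ (resp.\ $\tfrac{2.6}{\pi e}\log m$ for $\mu_0$) plus $t$. The asymmetry between the even and odd cases, and between the upper and lower thresholds, comes from bookkeeping the polynomial-in-$m$ factors (dimension of the space of cusp forms, the factor $d(q)$, the ratio of local densities at $2$) which contribute the $\log m$ terms; I would absorb these by being slightly generous, which is why the two logarithmic constants differ and are not optimized.

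**Key steps, in order.**

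\begin{enumerate}
\item Express $E(q) = \mathbb{E}_{\mu_s}[r_{A_i}(q)]$ via Siegel's mass formula as (product of local densities) and compute its asymptotics in $m$: show $\log E(q) = (\tfrac{m}{2\pi e} - q)\cdot \pi e + O(\log m)$ uniformly for $q = O(m)$, using Stirling on $\Gamma(m/2)$; identify the sign change at $q = \tfrac{m}{2\pi e}$ and pin down the constant $e^{\pi e}$ in the exponential rate.
\item \textbf{Lower range.} For $q$ below the threshold by the stated margin, bound $\mu_s(r_{A_i}(q)\geq 1) \le E(q) \le 5113^{-t}$ directly from Markov's inequality and Step 1.
\item \textbf{Variance input.} Apply Theorem~\ref{Siegelthmm} with $n=1$, $B=(q)$: write $\va$ as the smooth sum over squares of functionals on the $q$-th Fourier coefficients of the theta-transferred Hecke eigenforms.
\item \textbf{Ramanujan bound.} Bound each such Fourier coefficient using Deligne's bound for weight-$m/2$ holomorphic cusp forms; combine with bounds on the dimension of the relevant space and on the normalizing periods to get $\va \ll_\epsilon q^{(m-2)/2 + \epsilon} \cdot (\text{poly in } m)$, equivalently $\va \ll E(q_0)^2 \cdot (\text{poly in } m)$ where $q_0 = \tfrac{m}{2\pi e}$ is the threshold.
\item \textbf{Upper range.} For $q$ above the threshold by the stated margin, Chebyshev gives $\mu_s(r_{A_i}(q)=0) \le \va / E(q)^2 \le 5133^{-t}$ (resp.\ $5113^{-t}$), using Steps 1 and 4 and absorbing the polynomial-in-$m$ factors into the logarithmic margin.
\item Handle the even-lattice case $\mu_0$ identically, tracking the extra factor $2$ from $2q$ and the $p=2$ local density, which shifts the logarithmic constants.
\end{enumerate}

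**Main obstacle.** The delicate point is making the variance bound in Step~4 \emph{sharp enough} that the crossover in Step~5 happens within only a logarithmic margin above the mean-threshold: any loss of a power of $q$ (rather than $q^\epsilon$) would push the threshold by a constant multiple of $m$ and destroy the cutoff. This forces the use of the full strength of Ramanujan/Deligne rather than, say, a convexity bound, and requires careful control of the period normalizations and of the number of Hecke eigenforms entering the spectral sum so that only $q^\epsilon$ and polynomial-in-$m$ losses occur. The secondary nuisance is the uniformity in $m$ of the Stirling asymptotics and of the local density computations, which must hold in the regime $q = O(m)$, $m\to\infty$ simultaneously.
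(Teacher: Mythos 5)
Your overall strategy matches the paper's: Siegel mass formula plus Stirling to locate the mean threshold at $q\sim\frac{m}{2\pi e}$, Markov's inequality for the lower range, the Siegel variance formula with the trivial kernel reducing to weight-$m/2$ cusp forms, Ramanujan/Deligne, and Chebyshev for the upper range. Your Steps~1, 2, 3, 5, 6 are essentially what the paper does.

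However, there is a genuine gap in your Step~4, precisely at the point you flag as the ``delicate point.'' You correctly observe that the variance must be bounded up to only polynomial-in-$m$ losses, and you refer to controlling ``the normalizing theta-period'' and ``the dimension of the relevant space,'' but you do not supply a mechanism. The paper does this in two moves that your proposal does not identify. First, by Hecke multiplicativity (Proposition~\ref{Heckeosc}) one writes $\Theta(\phi_{\pi,j},2q)=\Theta(\phi_{\pi,j},2)\,\lambda_{\Theta(\phi_{\pi,j})}(q)$, so after Ramanujan the variance at $2q$ is bounded by $d(q)^2 q^{m/2-1}\sum_\pi|\Theta(\phi_{\pi,j},2)|^2 = d(q)^2 q^{m/2-1}\,\va(2)$, reducing the problem to the variance of representing $2$ itself. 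Second, $\va(2)\leq 2\max_i R_{A_i}(2)\cdot R(2)$, and $\max_i R_{A_i}(2)$ — the maximal number of vectors of norm $2$ in a unimodular lattice of rank $m$ — is bounded by $10m^2$ via a purely combinatorial argument about root vectors (Proposition~\ref{rootprop}, supported by Lemmas~\ref{count}, \ref{root2}, \ref{r2}). Without this explicit root-vector count, the ``polynomial-in-$m$ factors'' you wish to absorb into the logarithmic margin are not actually controlled: a naive bound on the number of theta-lifted eigenforms or on $\max_i R_{A_i}(2)$ (say via a general short-vector count in lattices of covolume~$1$) would not obviously be $O(m^2)$ uniformly over the genus, and would widen the margin beyond the stated $\log m$ window. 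So the missing idea is concrete: you need the self-referential step $\va(2q)\ll d(q)^2 q^{m/2-1}\va(2)$ together with the combinatorial input that a unimodular lattice has at most $O(m^2)$ roots, and this is exactly what makes the cutoff sharp to within $O(\log m)$.
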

 We give the proof of Theorem~\ref{spheredense} in Section~\ref{sec2}. We have the following conjecture. 
 \begin{conj}\label{conj}
 Let $q\geq (1+\epsilon) \frac{m}{2\pi e}$ for some fixed $\epsilon>0.$ Then $q$ is representable by every odd unimodular lattice of dimension $m\gg_{\epsilon} 1.$ Similarly, every even integer $2q\geq (1+\epsilon) \frac{m}{2\pi e}$ is representable by every even unimodular lattice of dimension $m\gg_{\epsilon} 1.$
 \end{conj}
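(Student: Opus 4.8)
I would aim to deduce the conjecture from the Siegel variance formula (Theorem~\ref{Siegelthmm} with $n=1$) by the mechanism behind Theorem~\ref{spheredense}, but upgrading ``$\mu_{s}$-almost every lattice'' to ``every lattice''. The input is the Siegel--Weil main term: writing $M(q)$ for the $\mu_{s}$-average over $C(A)$ of $r_{A}(q):=\#\{\vec{x}\in\Z^{m}:\vec{x}^{\intercal}A\vec{x}=q\}$, one has $M(q)=\beta_{\infty}(q)\,\mathfrak{S}(q)$ with $\mathfrak{S}(q)\asymp 1$ uniformly (every $A\in C(A)$ is $p$-adically $\cong\Z_{p}^{m}$) and $\beta_{\infty}(q)=\frac{\pi^{m/2}}{\Gamma(m/2)}q^{m/2-1}$; Stirling gives $\beta_{\infty}(q)\sim\sqrt{\tfrac{m}{4\pi}}\,q^{-1}\bigl(\tfrac{2\pi e q}{m}\bigr)^{m/2}$, so $M(q)\to\infty$, indeed grows exponentially in $m$, precisely when $q\ge(1+\epsilon)\tfrac{m}{2\pi e}$ --- which is the source of the threshold in the conjecture. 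The variance formula writes the smooth variance $\va(q)$ as a smooth sum over the weight-$m/2$ Hecke eigenforms $f_{1},\dots,f_{d}$ (with $d=\dim S_{m/2}(\mathrm{SL}_{2}(\Z))\ll m$) arising by the theta transfer from $O_{A}$, of $|\mathcal{L}_{q}(f_{j})|^{2}$ with $\mathcal{L}_{q}$ a fixed functional of the $q$-th Fourier coefficient; bounding each $|\mathcal{L}_{q}(f_{j})|$ by Deligne's (Ramanujan) bound and summing gives an estimate for $\va(q)$ with $\va(q)/M(q)^{2}\to 0$ in the above range, and Chebyshev's inequality then yields $\mu_{s}(r_{A}(q)=0)\to0$, the content of Theorem~\ref{spheredense}.

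\textbf{The obstacle.} The whole difficulty is to delete ``almost''. Since $\va(q)=\sum_{A}\mu_{s}(A)(r_{A}(q)-M(q))^{2}$ is a $\mu_{s}$-weighted sum of squares, isolating one term costs a factor $\mu_{s}(A)^{-1}=|O_{A}(\Z)|\cdot\mathcal{M}_{m}$, where $\mathcal{M}_{m}=\sum_{A_{i}\in C(A)}|O_{A_{i}}(\Z)|^{-1}$ is the Minkowski--Siegel mass; by the mass formula $\log\mathcal{M}_{m}\asymp m^{2}\log m$, so $\mathcal{M}_{m}$ is \emph{super-exponentially} large, and there are lattices --- at the extreme $\Z^{m}$, with $|O_{A}(\Z)|=2^{m}m!$ --- of correspondingly tiny $\mu_{s}$-measure, so this loss dwarfs the merely exponential margin of $M(q)^{2}$ over $\va(q)$. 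Concretely, writing the cuspidal part as $r_{A}(q)-M(q)=\sum_{j}c_{A}(f_{j})\mathcal{L}_{q}(f_{j})$ with $\sum_{A}\mu_{s}(A)|c_{A}(f_{j})|^{2}=w_{j}$, the only available bound is $|c_{A}(f_{j})|^{2}\le w_{j}/\mu_{s}(A)$; and since Deligne's bound is sharp for an individual Fourier coefficient, while summing over the $\ll m$ eigenforms can contribute at most a $\sqrt m$-saving, no variant of the pure variance argument forces $|r_{A}(q)-M(q)|<M(q)$ for an \emph{arbitrary} lattice. In the same vein, Theorem~\ref{spheredense} controls neither an individual lattice of large automorphism group nor a positive proportion of $C(A)$ in the counting sense. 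This $L^{2}\!\to\!L^{\infty}$ loss, quantified by $\mathcal{M}_{m}$, is the real obstruction.

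\textbf{A route around it.} I would stratify $C(A)$ by the size of $|O_{A}(\Z)|$. Lattices of small enough automorphism group are handled directly by the variance bound: $r_{A}(q)>0$ for $q\ge(1+\epsilon)\tfrac{m}{2\pi e}$. For the remaining, highly symmetric lattices I would try to establish a structural dichotomy of the form: a unimodular lattice of dimension $m\gg_{\epsilon}1$ with $|O_{A}(\Z)|$ above the relevant threshold must have an orthogonal summand isometric to a large orthogonal sum of copies of $\Z$ (in the odd case) or of $E_{8}$ (in the even case). Such a summand represents every positive integer $q$ in the range in question --- $\Z^{k}$ by the four-squares theorem for $k\ge 4$, and $E_{8}^{k}$ since $\Theta_{E_{8}}=E_{4}$ has strictly positive Fourier coefficients --- so $A$ does too. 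An alternative would be to replace Deligne's bound by an effective equidistribution statement for the $O_{A}(\Z)$-orbit under Hecke operators, uniform over the whole genus (via the Weil representation and bounds on the relevant matrix coefficients or triple products), strong enough to make the cuspidal part pointwise smaller than $M(q)$. The hard part --- and the reason this remains a conjecture --- is uniformity in the rank $m$: the structure theorem tying $|O_{A}(\Z)|$ to the largest symmetric orthogonal summand is not available in the required form (the Leech lattice, with $|O_{A}(\Z)|\approx 8\times 10^{18}$ in dimension $24$ yet minimum $4$ and neither root nor unit vectors, shows such a statement must genuinely exploit $m$ being large), and the uniform equidistribution input lies well beyond current technology for families of quadratic forms of unbounded rank.
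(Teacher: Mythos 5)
This statement is Conjecture~\ref{conj} in the paper: it is not proved there, and is only motivated by Theorem~\ref{spheredense} and the Siegel--Weil asymptotics that produce the threshold $q\sim m/(2\pi e)$. Your write-up correctly reconstructs that motivation (the main term $R(q)$ from \eqref{siegelrep} plus Stirling explains the cutoff, and the variance formula of Theorem~\ref{Siegelthmm} with the Ramanujan bound gives the $\mu_s$-almost-all statement), and your diagnosis of the obstruction is exactly right: the variance $\sum_i \mu_s(A_i)\,(R_{A_i}(q)-R(q))^2$ only controls an individual class at the cost of $\mu_s(A_i)^{-1}$, which can be as large as the Minkowski--Siegel mass times $|O_{A_i}(\Z)|$, a super-exponential loss that swallows the exponential margin $R(q)^2/\mathrm{Var}$. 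So the honest verdict is that you have not proved the statement, and neither does the paper --- which is consistent, since it is stated as a conjecture.

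The genuine gap is in your ``route around it.'' The stratification step rests on an unproven structural dichotomy (``$|O_A(\Z)|$ large $\Rightarrow$ a large $\Z^k$ or $E_8^k$ orthogonal summand''), and as you yourself note with the Leech lattice, no such statement holds without a formulation that exploits $m\to\infty$ in an essential way; extremal even unimodular lattices in dimensions $32$ and beyond (root-free, yet with sizable automorphism groups) indicate that any correct version would have to be far more delicate, and nothing of the sort is currently known. The alternative route --- effective equidistribution of Hecke orbits uniformly over a genus of unbounded rank, strong enough to beat the main term pointwise --- likewise appeals to results that do not exist. Two smaller corrections: the theta lifts here land in weight $m/2$ forms of level dividing $8$ (Proposition~\ref{ppww}), not level $1$, though the dimension is still $O(m)$; and the paper's main term \eqref{siegelrep} carries the factor $m/\Gamma(m/2+1)=2/\Gamma(m/2)$, so your $\beta_\infty$ is off by the constant $2$, which is harmless for the threshold computation.
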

\begin{rem}
Let $\delta>0$ and $L$ be an even unimodular lattice of dimension $m$. Theorem~\ref{spheredense} implies the sphere packing density of $L$ is less than $m^{2+\delta}2^{-m}$ with $\mu_0$-probability $1+O(m^{-\delta+\epsilon})$ for any $\epsilon>0.$ Moreover, if  $A$ is an odd uniomodular lattices, then the sphere packing density  is less than $m^{1+\delta}2^{-m}$ with $\mu_1$-probability $1-m^{-\delta+\epsilon}$ for any $\epsilon>0.$ 
 The problem of studying unimodular lattices with large sphere packing densities has studied by several authors; see \cite{Conwayb}.  The best known upper bounds on the sphere packing density of unimodular lattices  is $(1.424)^m 2^{-m}$  \cite{Sloane}, while the best known lower bound for the density of lattices (not necessarily integral)  is $m\log\log(m)2^{-m}$ \cite{Akshay}. So, there is an exponential gap between the upper bound and the lower bound for the sphere packing density of  unimodular lattices. We substantially  improve the upper bound and show that the sphere packing density is $o(m^{2+\epsilon}2^{-m})$ for all but a tiny fraction of unimodular lattices with respect to the Siegel mass probability. Conjecture~\ref{conj} implies the sphere packing density of even unimodular lattices is less than $(1+\epsilon)^m2^{-m}.$
\end{rem}
%  cubic free(there is only one spinor genus)
\subsubsection{Optimal equidistribution of the integral points on quadrics}The second application  is on the distribution of the integral points on quadrics. Suppose that  $F(x_1,\dots,x_m)$ is a  positive definite integral quadratic form in $m\geq 3$ variables with discriminant $D$. Let $N>0$ be an integer where $\gcd(N,2D)=1$, and define
$$V_N(R):=\big\{(x_1,\dots,x_m): F(x_1,\dots,x_m)=N, \text{ and } x_i\in R \text{ for } 1 \leq i \leq m \big\},$$
where $R$ is any commutative ring.  Assume that $V_N(\mathbb{Z}_p)\neq \emptyset$ for every prime $p.$
 Let $O_{F}$ be the orthogonal group associate to the quadratic form $F(x_1,\dots,x_m).$ Note that $V_1(\mathbb{R})$ is a compact  homogenous variety  with  the action of $O_{F}(\mathbb{R}).$
Let $\mu$ be the unique $O_{F}(\mathbb{R})$ invariant probability measure defined on $V_1(\mathbb{R}).$ Suppose that $k(x)$ is a fixed positive smooth function  supported  on $(-2,2),$ and $k(x)=1$ for $x\in (-1,1).$
 Let 
\[K_{\eta}(\vec{x},\vec{y}):=C_{\eta}k\big(\frac{\sqrt{F(\vec{x}-\vec{y})}}{\eta}\big),\] where $\vec{x},\vec{y}\in V_1(\mathbb{R}),$ $\eta\in \mathbb{R}$ and $C_{\eta}$ is a normalization factor such that
\[
\int_{ V_1(\mathbb{R})} K_{\eta}(\vec{x},\vec{y}) d \mu(\vec{y})=1.
\]
 We note that $K_{\eta}(\vec{x},\vec{y})$ is a point-pair invariant function, which means 
 \[K_{\eta}(g\vec{x},g\vec{y})=K_{\eta}(\vec{x},\vec{y})\] for every $g\in O_{F}(\mathbb{R}).$
%  By normalizing $K_r(\vec{x},\vec{y})$ with a bounded constant, we assume that $\int_{V_1} K_r(\vec{x},\vec{y}) d\mu_i(y)=1.$ 
  Let  $R_F(N):= |V_N(\mathbb{Z})|.$ For $m\geq 4$ and from $\gcd(N,2D)=1$~\cite[Remark 1.7]{Optimal}, it follows that
\begin{equation}\label{lohardy}
N^{m/2-1-\epsilon} \ll R_F(N) \ll  N^{m/2-1+\epsilon}.
\end{equation}
For $m=3,$ we further assume that $N\neq t_i \mathbb{Z}^2$ for finitely many $\{ t_i \}$ that defines the exceptional-type square classes; see \cite{Hanke}. Then by Siegel's ineffective bound $L(1,\chi_q)> q^{-\epsilon},$ we have the same bounds as in \eqref{lohardy}. 
   Define
\begin{equation}\label{varsum}
\va_F(N,\eta):= \int_{V_1(\mathbb{R}) } \big(K_{\eta}(\vec{x},N)- R_F(N)\big)^2 d\mu(\vec{x}),
\end{equation}
where $K_{\eta}(\vec{x},N):=\sum_{\vec{y}\in  \frac{1}{\sqrt{N}} V_N(\mathbb{Z})} K_{\eta}(\vec{x},\vec{y}).$ The following theorem is an application of our main results.
\begin{thm}\label{mthm} Let $F$ and $N$ be as above. For $m=3,$ suppose that  $N\neq t_i \square$, where $\{ t_i \}$ defines the finitely many  exceptional-type square classes. 
  Assume   either of these assumptions 
  \begin{itemize} 
 \item $m$ is even, 
 \item$N=sl^2$ for some bounded square free integer $s$, 
\item Lindel\"of hypothesis holds for the holomorphic modular forms.
\end{itemize} Then,  we have
\[
\va_F(N,\eta)\ll \frac{N^{\epsilon} R_F(N)} {\eta^{m-1}},
\]  
where the implied constant in $ \ll$ only depends on $F$ and $\epsilon >0.$
\end{thm}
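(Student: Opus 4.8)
The plan is to deduce Theorem~\ref{mthm} from the Siegel variance formula (Theorem~\ref{Siegelthmm} with $n=1$, $A=F$, $B=(N)$) together with the Ramanujan bound for the Fourier coefficients of holomorphic cusp forms. First I would record the spectral meaning of $\va_F(N,\eta)$. Since $F$ is positive definite, $V_1(\mathbb R)$ is an ellipsoid on which $O_F(\mathbb R)$ acts transitively with invariant probability measure $\mu$, so $L^2(V_1(\mathbb R),\mu)$ decomposes into the spaces $\mathcal H_d$ of restrictions of harmonic polynomials of degree $d$, $d=0,1,2,\dots$, with $\dim\mathcal H_d\asymp d^{m-2}$. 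Because $K_\eta(\vec x,\vec y)$ is a point-pair invariant it acts on each $\mathcal H_d$ by a scalar $\widehat K_\eta(d)$, and writing $\theta_N:=\sum_{\vec y\in\frac1{\sqrt N}V_N(\mathbb Z)}\delta_{\vec y}$ we have $K_\eta(\cdot,N)=K_\eta*\theta_N$ and $K_\eta(\cdot,N)-R_F(N)=K_\eta*(\theta_N-R_F(N)\mu)$ by the normalization $\int K_\eta(\vec x,\vec y)\,d\mu(\vec y)=1$. The measure $\theta_N-R_F(N)\mu$ has no $\mathcal H_0$-component, so Parseval gives
\begin{equation*}
\va_F(N,\eta)=\sum_{d\geq 1}|\widehat K_\eta(d)|^2\,\|\Pi_d\theta_N\|_{L^2(\mu)}^2 ,
\end{equation*}
where $\Pi_d$ is the orthogonal projection onto $\mathcal H_d$. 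A Funk--Hecke computation, using that $C_\eta\asymp\eta^{-(m-1)}$, shows $|\widehat K_\eta(d)|\ll 1$ and $|\widehat K_\eta(d)|\ll_{A,F}(1+d\eta)^{-A}$ for every $A>0$, so up to a negligible tail only the range $d\ll\eta^{-1}$ contributes.

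Next I would identify each term with the object furnished by the Siegel variance formula. For $P$ in an orthonormal basis of $\mathcal H_d$ one has $\langle\theta_N,P\rangle=N^{-d/2}a_N(\theta_F^{(P)})$, where $\theta_F^{(P)}(\tau)=\sum_{\vec x\in\mathbb Z^m}P(\vec x)e(F(\vec x)\tau)$ is a holomorphic modular form of weight $m/2+d$ and level dividing $4D$, and for $d\geq 1$ it is a cusp form (its constant term at every cusp vanishes because $P$ is harmonic of positive degree). Hence $\|\Pi_d\theta_N\|_{L^2(\mu)}^2=N^{-d}\sum_P|a_N(\theta_F^{(P)})|^2$, which is exactly the smooth sum over squared functionals of the $B$-th Fourier coefficients of the vector-valued holomorphic forms obtained by theta transfer from $O_F$ appearing in Theorem~\ref{Siegelthmm}; decomposing into Hecke eigenforms, $\va_F(N,\eta)$ becomes a smooth sum of $|\ell_N(f)|^2$ over such eigenforms $f$, weighted by $|\widehat K_\eta(d_f)|^2$, where $d_f$ is the harmonic degree attached to $f$ (so $f$ has weight $k_f=m/2+d_f$), and $\ell_N(f)$ is an explicit functional of $a_N(f)$ whose auxiliary normalizing factors are $O(N^\epsilon)$ by the Rallis inner product formula and \eqref{lohardy}.

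Now I would insert the Ramanujan bound. When $m$ is even, $k=m/2+d$ is an integer $\geq 3$ for $d\geq 1$, so Deligne's theorem gives $|a_N(f)|\ll_\epsilon N^{(k-1)/2+\epsilon}$ for every Hecke eigenform $f$; summing over the $\ll d^{m-2}$ eigenforms spanning the image of $\{\theta_F^{(P)}:P\in\mathcal H_d\}$ and using $N^{-d}\cdot N^{m/2+d-1}=N^{m/2-1}$ yields
\begin{equation*}
N^{-d}\sum_P|a_N(\theta_F^{(P)})|^2\ll_\epsilon d^{m-2}N^{m/2-1+\epsilon},
\end{equation*}
provided the total Petersson mass of that span is $\ll_\epsilon N^\epsilon$. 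When $m$ is odd the weight is half-integral and Deligne is unavailable: if $N=s\ell^2$ with $s$ bounded and squarefree one reduces, via the Shimura correspondence, to the integral-weight Shimura lift whose coefficients satisfy Ramanujan and obtains the same bound, and otherwise one invokes the Lindel\"of hypothesis for holomorphic modular forms, which gives $|a_N(f)|\ll_\epsilon N^{(k-1)/2+\epsilon}$ directly via Waldspurger's formula. Summing over $d$,
\begin{equation*}
\va_F(N,\eta)\ll_\epsilon N^{m/2-1+\epsilon}\sum_{d\geq 1}|\widehat K_\eta(d)|^2 d^{m-2}\ll N^{m/2-1+\epsilon}\sum_{d\ll\eta^{-1}}d^{m-2}\ll\frac{N^{m/2-1+\epsilon}}{\eta^{m-1}},
\end{equation*}
and $R_F(N)\gg N^{m/2-1-\epsilon}$ from \eqref{lohardy} (which for $m=3$ uses the hypothesis $N\neq t_i\square$ together with Siegel's lower bound $L(1,\chi_q)\gg q^{-\epsilon}$) completes the proof.

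The main obstacle is the third step: to sum Deligne's (or the conditional) bound over the entire theta lift while losing only the factor $\dim\mathcal H_d$, one needs control --- uniform in both $d$ and $N$ --- of the Petersson norms of the twisted theta series $\theta_F^{(P)}$, equivalently a polynomial-in-$N$ bound for the relevant special $L$-values supplied by the Rallis inner product formula; and for odd $m$ the Ramanujan bound for half-integral weight holomorphic forms is genuinely open, which is precisely the reason for the three alternative hypotheses. A secondary technical point is to make the decay of the spherical transform $\widehat K_\eta(d)$ --- with the normalization $C_\eta\asymp\eta^{-(m-1)}$ built in --- explicit enough that the range $d\gg\eta^{-1}$ contributes $o\!\big(N^\epsilon R_F(N)\eta^{-(m-1)}\big)$.
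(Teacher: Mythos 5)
Your proposal follows the same overall skeleton as the paper's proof --- invoke the Siegel variance formula with $n=1$, use Hecke multiplicativity to factor the $N$-th Fourier coefficient through the first coefficient, apply the Ramanujan (or conditional Ramanujan/Lindel\"of) bound to pull out $N^{m/2-1+\epsilon}$, and then control the remaining spectral sum. But you diverge at precisely the step you flag as the ``main obstacle,'' and the paper's resolution of that step is the real content of the argument, so the proposal is not complete as written.

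After pulling out the Ramanujan factor you are left with bounding
\(
\sum_{k\geq 1} h_{N,r}(k)^2\,\Theta(\phi_{k,i},1)^2 .
\)
You attempt this by estimating each factor separately --- $\dim\mathcal H_d\asymp d^{m-2}$ terms, each of size governed by the Petersson norm of the theta lift, times the decay of the spherical transform $\widehat K_\eta(d)$ --- and you explicitly note that this requires a uniform polynomial bound on the ``Petersson mass'' of the span of $\{\theta_F^{(P)}:P\in\mathcal H_d\}$, which you do not supply. The paper avoids this entirely by a scaling identity: since $h_{N,r}(k)=h_{1,r/\sqrt N}(k)$ by definition of the spherical transform, the sum above is \emph{exactly} $\va(1,r/\sqrt N)$ by applying Theorem~\ref{Siegelthmm} at $B=1$. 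No information about Petersson norms, dimensions, or the precise decay rate of $\widehat K_\eta$ is needed: the identification is an equality. The quantity $\va(1,\eta)$ is then bounded trivially by an $L^\infty\times L^1$ estimate on the kernel, giving $\va(1,\eta)\ll C_{1,\eta}\ll\eta^{-(m-1)}$, which is the required bound. In other words, the step you identify as the bottleneck is where the paper does something qualitatively different from what you propose, and without that move your argument is genuinely incomplete: you would need to prove a uniform-in-$k$ bound on $\sum_P|\Theta(\phi_{k,i},1)|^2$ (equivalently, on the relevant $L$-values via Rallis), and that is nontrivial and not done in the proposal.

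Two secondary points. First, your claimed Funk--Hecke decay $|\widehat K_\eta(d)|\ll_A(1+d\eta)^{-A}$ is plausible but is extra machinery: the paper never needs pointwise decay of the spherical transform, only the closed-form identification of the tail sum with $\va(1,r/\sqrt N)$. Second, in the $m=3$ case the reduction to the Shimura lift and the appeal to Siegel's lower bound for $R_F(N)$ are stated correctly, and your remark that $\theta_F^{(P)}$ is cuspidal for $d\geq 1$ matches the paper's use of Propositions~\ref{ppww} and~\ref{Heckeosc}; these parts are fine.
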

We give the proof of Theorem~\ref{mthm} in Section~\ref{sec3}.  When $m=3$ Theorem~\ref{mthm} essentially follows from the work of Bourgain, Rudnick and Sarnak~\cite{Bourgain}.  They verified, with respect to different statistical tests, that the distribution of the integral points on the 2-sphere is similar to the distribution of a Poisson process. However, for $m\geq 4$ it was observed by Wright~\cite{Wright,Wright2} as mentioned in~\cite{Sarnak2} that they are big regions on $V_1(\mathbb{R})$ which repels the integral points. 
 Let $ C(\vec{x},\eta):=B(\vec{x},\eta)\cap V_1(\mathbb{R})$ be a cap of radius $\eta>0$ centered at $\vec{x}\in V_1(\mathbb{R}),$ where $B(\vec{x},\eta)$ is the Euclidean ball of radius $\eta>0.$  Wright~\cite{Wright,Wright2} showed that there are caps of size $\eta \gg N^{-\frac{1}{4}}$ which does not intersect  $\frac{1}{\sqrt{N}}V_N(\mathbb{Z}).$ In~\cite{Optimal}, we proved that every cap of size $\eta \gg N^{-\frac{1}{4}+\delta}$ contains an integral point for $m\geq 5$ and every $\delta>0.$  The following corollary implies that on average the covering properties of $\frac{1}{\sqrt{N}}V_N(\mathbb{Z})$ for every $m\geq4$ is optimal and is as good as the Poisson process. 
% Let $\eta \gg N^{-\frac{m-2}{2(m-1)}+\epsilon},$ then it follows from the above theorem that all but a tiny fraction of caps of size $r$ contain an integral point. Moreover, they have about  $(1+o(1))\text{vol}(C(\vec{x},\eta))R_F(N)$ integral points. 
\begin{cor}\label{sthm}
 Assume  the conditions of Theorem~\ref{mthm}. Let  $\eta \gg R_F(N)^{-\frac{1}{(m-1)}+\epsilon}\sim N^{-\frac{m-2}{2(m-1)}+\epsilon}$, then all but a tiny fraction of the caps of $V_1(\mathbb{R})$ with the radius $\eta$ intersects  $\frac{1}{\sqrt{N}}V_N(\mathbb{Z}).$ On the other hand if $\eta \ll R_F(N)^{-\frac{1}{(m-1)}-\epsilon} $ then only a tiny fraction of them intersect $\frac{1}{\sqrt{N}}V_N(\mathbb{Z}).$
\end{cor}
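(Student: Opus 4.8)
The plan is to derive the corollary from the variance bound in Theorem~\ref{mthm} by a standard Chebyshev/second-moment argument, being careful about the normalization of the kernel. First I would unwind the definitions: the quantity $K_\eta(\vec{x},N) = \sum_{\vec{y}\in \frac{1}{\sqrt N}V_N(\mathbb{Z})} K_\eta(\vec{x},\vec{y})$ counts (with smooth weight $C_\eta k(\sqrt{F(\vec{x}-\vec{y})}/\eta)$) the number of projected integral points in a cap of radius $\sim\eta$ around $\vec{x}$. Since $C_\eta$ is chosen so that $\int_{V_1(\mathbb{R})} K_\eta(\vec{x},\vec{y})\,d\mu(\vec{y}) = 1$, and $V_1(\mathbb{R})$ has dimension $m-1$, we have $C_\eta \asymp \eta^{-(m-1)}$, hence $\int_{V_1(\mathbb{R})} K_\eta(\vec{x},N)\,d\mu(\vec{x}) = R_F(N)$, so $R_F(N)$ is exactly the mean value of $K_\eta(\cdot,N)$. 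Thus $\va_F(N,\eta)$ in \eqref{varsum} is literally the variance of $K_\eta(\cdot,N)$ against $\mu$, and Theorem~\ref{mthm} gives $\va_F(N,\eta)\ll N^\epsilon R_F(N)\eta^{-(m-1)}$.

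Next I would set up the two regimes. For the \emph{covering} statement, fix $\eta \gg R_F(N)^{-1/(m-1)+\epsilon}$ and consider the "bad" set $\mathcal{B} = \{\vec{x}\in V_1(\mathbb{R}): C(\vec{x},c\eta)\cap \frac{1}{\sqrt N}V_N(\mathbb{Z}) = \emptyset\}$ for a suitable small constant $c$ (chosen so that $K_\eta(\vec{x},\vec{y})=0$ whenever $\vec{y}\notin C(\vec{x},2\eta)$, using $\operatorname{supp} k\subset(-2,2)$). On $\mathcal{B}$ we have $K_\eta(\vec{x},N)=0$, so $|K_\eta(\vec{x},N)-R_F(N)|^2 = R_F(N)^2$ there, and Chebyshev gives
\[
\mu(\mathcal{B}) \;\le\; \frac{\va_F(N,\eta)}{R_F(N)^2} \;\ll\; \frac{N^\epsilon}{R_F(N)\,\eta^{m-1}}\;\ll\; N^\epsilon R_F(N)^{-(m-1)\epsilon} \longrightarrow 0,
\]
using $R_F(N)\asymp N^{(m-2)/2}$ from \eqref{lohardy} to absorb the $N^\epsilon$ (after shrinking $\epsilon$); this is the "all but a tiny fraction of caps intersect the point set" conclusion. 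For the \emph{non-covering} statement, fix $\eta \ll R_F(N)^{-1/(m-1)-\epsilon}$. Here the count of caps needed to cover $V_1(\mathbb{R})$ is $\asymp \eta^{-(m-1)}$, while the total mass $\int K_\eta(\vec{x},N)\,d\mu = R_F(N)$ means the expected number of points per cap is $\asymp R_F(N)\eta^{m-1}\ll N^\epsilon R_F(N)^{-(m-1)\epsilon}\to 0$; so by a first-moment (Markov) bound the fraction of caps containing at least one point is $\ll R_F(N)\eta^{m-1}\to 0$. Finally I would convert the smoothed cap-counts back to genuine cap intersections: passing from a net of caps of radius $\eta$ to the statement "a random cap of radius $\eta$" costs only a bounded factor by comparing volumes of $C(\vec{x},\eta)$ and $C(\vec{x},2\eta)$, which are comparable since $V_1(\mathbb{R})$ is a fixed smooth compact manifold.

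The main obstacle — really the only nontrivial point, since everything else is bookkeeping — is the \emph{covering} direction: one must ensure that the smoothed quantity $K_\eta(\vec{x},N)$ being nonzero genuinely implies an honest integral point lies in a slightly larger cap, and conversely that the variance bound is strong enough to beat $R_F(N)^2$ in the relevant range. This is exactly where the exponent $R_F(N)^{-1/(m-1)}\sim N^{-(m-2)/(2(m-1))}$ appears: it is the threshold at which $\va_F(N,\eta)/R_F(N)^2 \asymp (R_F(N)\eta^{m-1})^{-1}$ crosses $1$, so the bound of Theorem~\ref{mthm} is used at its full strength and cannot be weakened. I would also remark that matching this with the lower bound of Wright (caps of radius $\gg N^{-1/4}$ with no points) and the result of \cite{Optimal} shows the threshold is sharp, so on average the covering radius of $\frac{1}{\sqrt N}V_N(\mathbb{Z})$ behaves like that of $R_F(N)$ i.i.d.\ uniform points — a Poisson-type prediction.
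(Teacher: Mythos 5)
Your argument is essentially the same as the paper's: Chebyshev's inequality applied to the variance bound of Theorem~\ref{mthm} for the covering direction, and a first-moment (Markov) bound for the non-covering direction, with $C_\eta\asymp\eta^{-(m-1)}$ supplying the normalization and \eqref{lohardy} used to absorb $N^\epsilon$. The paper phrases the second step via the observation that $K_\eta(\vec{x},N)\geq C_\eta$ whenever a point lies within $\eta$ of $\vec{x}$ (because $k\equiv 1$ on $(-1,1)$), integrating this against $\mu$ and dividing by $C_\eta$; this is the same first-moment count you carry out in cap language.

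One small slip in your covering argument: you define the bad set $\mathcal{B}$ using $C(\vec{x},c\eta)$ for a \emph{small} constant $c$, and then assert $K_\eta(\vec{x},N)=0$ on $\mathcal{B}$. That implication goes the wrong way: since $\operatorname{supp} k\subset(-2,2)$, you have $K_\eta(\vec{x},N)=0$ precisely when \emph{no} integral point lies within distance $2\eta$ of $\vec{x}$, so you need the cap in the definition of $\mathcal{B}$ to have radius $\geq 2\eta$ (equivalently, work with $K_{\eta/2}$ and the variance bound at scale $\eta/2$). A cap of radius $c\eta$ with $c$ small could be empty while $K_\eta$ is still positive because of a point in the annulus between $c\eta$ and $2\eta$. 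This only costs a bounded factor and does not change the threshold exponent, and in fact the paper itself harmlessly elides the same factor of $2$ by applying Chebyshev to $E(N,\eta)$ rather than $E(N,2\eta)$; but as stated your parenthetical justification for the choice of $c$ doesn't support the step.
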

We give the proof of the Corollary~\ref{sthm} in Section~\ref{sec3}. See also the work of Ellenberg,  Michel and Venkatesh~\cite{Ellenberg} for the non-archimedean version  of the above corollary for $F=x_1^2+x_2^2+x_3^2$ under the Linnik condition on $N$. When $N=p^{2k}$ for some fixed prime number $p$ and $F=x_1+\dots +x_d^2$ for $d=3,4$ the above corollary follows from the work of Ghosh, Gorodnik and Nevo\cite{GGN}. 

 We discuss two related applications of Theorem~\ref{mthm} in what follows. First,  we recall  the  definition of  the averaged  covering exponent of the integral points on the sphere.   Let $S^{m-1}(\mathbb{R})$
be the sphere of radius $1$ in $\mathbb{R}^m.$   Let $S^{m-1}_N(\mathbb{Z})$ be the set of integral points 
\[
S^{m-1}_N(\mathbb{Z}):=\big\{ (x_1,\dots,x_m)\in \mathbb{Z}^{m}: x_1^2+\dots+x_m^2=N   \big\},
\]
where $0 \leq N\in \mathbb{Z}.$ We have $\frac{1}{\sqrt{N}}S^{m-1}_N(\mathbb{Z}) \subset S^{m-1}(\mathbb{R}).$  
%Let $ C(\vec{x},r):=B(\vec{x},r)\cap S^{d-1}(\mathbb{R})$ be a cap of radius $r>0$ centered at $\vec{x}\in S^{d-1}(\mathbb{R}),$ where $B(\vec{x},r)$ is the Euclidean ball of radius $r>0.$  
Let $N_{\delta,\epsilon}$ denote the minimum integer  such that all but $\epsilon^{\delta}$ fraction of caps $C(\vec{x},\epsilon)$ of size $\epsilon$ on $S^{m-1}(\mathbb{R})$ contain a point of $\frac{1}{\sqrt{N}}S^{m-1}_N(\mathbb{Z}).$ 
%
% the caps centered at $\frac{1}{\sqrt{N}}S^{m-1}_N(\mathbb{Z})$ with volume $C_{\delta}(N)$ on  $S^{m-1}(R)$ cover all but $\delta$ fraction of $S^{m-1}(\mathbb{R}).$ 
%% 
%  More precisely, for every primitive integral vector $\vec{a}$, let $\tilde{\vec{a}}=R\frac{\vec{a}}{|\vec{a}|}$ be the parallel vector to $\vec{a}$ on the sphere $S^{d-1}(R)$. Then, there is a cap of size $c_1\frac{R^{1/2}}{\sqrt{|\vec{a}|}}$  with no integral points (not necessarily centered at $\tilde{\vec{a}}$)  inside the cap centered at $\tilde{\vec{a}}$ with size $c_2\frac{R^{1/2}}{\sqrt{|\vec{a}|}}$, where $c_1<\vec{c}_2$ are universal constants; see Section~\ref{repel}.
%  
Sarnak defined  \cite{Sarnak2}
 the averaged covering exponent of the  integral points on the sphere by:
 \begin{equation*}
 \begin{split}
 \bar{K}_{m}&:=\lim_{\delta\to 0}\limsup_{\epsilon \to 0}\frac{\log \big(\#S^{m-1}_{N_{\delta,\epsilon}}(\mathbb{Z}) \big) }{\log\big( 1/\text{vol }(C(\vec{x},\epsilon))\big)}.
 \end{split}
 \end{equation*}
% It is well known-that for $d\geq 4,$  by assuming some congruence condition on $N$ mod 8, the number of integral points is bounded from below and above by  $N^{\frac{d-2}{2}\pm \epsilon}$ up to a fixed coefficient.   One can find about $r^{-(d-1)}$ disjoint caps $C(\vec{x},r)$, where $\vec{x}\in S^{d-1}_R(\mathbb{R}).$ Therefore,  by the Pigeonhole principle if one can find an integral point inside a positive proportion of caps $C(\vec{x},r)$  (with respect to the Haar measure for $\vec{x}\in S^{d-1}(\mathbb{R})$) then $N^{\frac{d-2}{2}\pm \epsilon} \gg r^{-(d-1)}.$ This means $r\gg N^{-\frac{d-2}{2(d-1)}-\epsilon},$ which is  optimal.   
%We denote the range $r\approx N^{-\frac{d-2}{2(d-1)}+\epsilon}$ by the averaged optimal scale for the covering of caps.
 By the Pigeonhole principle, it is easy to see that $\bar{K}_{m}\geq 1.$ Sarnak  proved that $K_4=1$ \cite{Sarnak2}.  This implies  the optimal covering properties of the golden quantum gates inside $SU(2)$; see \cite{Parzanchevski}, \cite{Sardaric}. Sarnak's method is based on the spectral theory of modular forms and uses the Ramanujan bound on the Fourier coefficient of the modular forms. It relies on the coincides that $S^3$ is isomorphic to the units of quaternions. In particular,  the analogues result for $S^{m-1}$  does not follow. The following  is a corollary of Theorem~\ref{mthm}. 
\begin{cor}\label{sggn}
 Assume that $m\geq 4$ is even. Then, 
 $$
\bar{K}_{m}=1.
 $$
 \end{cor}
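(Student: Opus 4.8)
The plan is to deduce Corollary~\ref{sggn} from the variance bound of Theorem~\ref{mthm} applied to $F = x_1^2 + \dots + x_m^2$, which is an even-dimensional positive definite form when $m$ is even, so the first hypothesis of Theorem~\ref{mthm} is satisfied. The strategy is a standard second-moment/Chebyshev argument: if the $L^2$-mass of the counting function $K_\eta(\vec x, N)$ is tightly concentrated around its average $R_F(N)$, then only a negligible proportion of points $\vec x$ can have $K_\eta(\vec x,N)=0$, i.e. can fail to have a lattice point of $\tfrac{1}{\sqrt N}S^{m-1}_N(\mathbb Z)$ in the cap of radius $\eta$ around them.

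First I would fix $\eta$ of size $\eta = R_F(N)^{-\frac{1}{m-1}+\epsilon}$ (equivalently $\eta \sim N^{-\frac{m-2}{2(m-1)}+\epsilon}$ by \eqref{lohardy}), exactly the regime of Corollary~\ref{sthm}, and recall that for a cap $C(\vec x,\eta)$ not to contain any point of $\tfrac{1}{\sqrt N}S^{m-1}_N(\mathbb Z)$ we need $K_\eta(\vec x,N)=0$, so $(K_\eta(\vec x,N) - R_F(N))^2 = R_F(N)^2$ there. Hence the $\mu$-measure of the set of bad $\vec x$ is at most $R_F(N)^{-2}\,\va_F(N,\eta)$, which by Theorem~\ref{mthm} is $\ll N^\epsilon R_F(N)^{-1}\eta^{-(m-1)} \ll N^{\epsilon'} R_F(N)^{-1} R_F(N)^{1-(m-1)\epsilon} = N^{\epsilon'} R_F(N)^{-(m-1)\epsilon}$, which tends to $0$; more precisely, for a suitable normalization this bad fraction is $\ll \eta^{\delta}$ for any fixed $\delta>0$ once $N$ is large enough (choosing $\epsilon$ relative to $\delta$), since $\mathrm{vol}(C(\vec x,\epsilon)) \asymp \epsilon^{m-1}$. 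This shows that $N_{\delta,\epsilon'}$, the least $N$ with all but an $\epsilon'^{\,\delta}$-fraction of caps of radius $\epsilon'$ covered, satisfies $\#S^{m-1}_{N_{\delta,\epsilon'}}(\mathbb Z) = R_F(N_{\delta,\epsilon'}) \ll (\epsilon')^{-(m-1)-o(1)}$, i.e. $N_{\delta,\epsilon'} \ll (\epsilon')^{-2 - o(1)}$ up to the implicit adjustments. Combined with $\mathrm{vol}(C(\vec x,\epsilon')) \asymp (\epsilon')^{m-1}$, this gives $\bar K_m \le 1$.

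For the matching lower bound $\bar K_m \ge 1$, I would invoke the elementary pigeonhole argument already noted in the text: the caps $C(\vec x,\epsilon)$ of radius $\epsilon$ have volume $\asymp \epsilon^{m-1}$, so covering a positive proportion of $S^{m-1}(\mathbb R)$ by the at most $\#S^{m-1}_N(\mathbb Z)$ caps centered at the normalized lattice points forces $\#S^{m-1}_N(\mathbb Z) \gg \epsilon^{-(m-1)} \asymp 1/\mathrm{vol}(C(\vec x,\epsilon))$, hence $\bar K_m \ge 1$. Taking the limits $\epsilon \to 0$ and then $\delta \to 0$ in the definition of $\bar K_m$ and combining the two inequalities yields $\bar K_m = 1$.

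The main obstacle is bookkeeping the chain of exponents carefully: one must check that the soft $N^\epsilon$ losses in Theorem~\ref{mthm} and in the bound \eqref{lohardy} relating $R_F(N)$ to $N^{m/2-1}$ genuinely get absorbed, so that for every fixed $\delta > 0$ one can pick $\epsilon$ with $(m-1)\epsilon > \delta + \epsilon'$-type slack, making the bad fraction $o(\epsilon^\delta)$ uniformly; and one must verify that $N$ ranging over integers coprime to $2D=2$ (automatic here) does not obstruct the existence of representations, which for sums of $m \ge 4$ squares follows from \eqref{lohardy} and the local solvability that is automatic for $m \geq 4$. Once these are in place, the corollary is immediate.
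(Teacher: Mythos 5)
Your proposal is correct and follows essentially the same route as the paper: apply the variance bound of Theorem~\ref{mthm} (unconditional for even $m$) together with Chebyshev's inequality to bound the measure of the set $E(N,\eta)$ of uncovered points, exactly as in the paper's inequality~\eqref{chebche}, then translate the resulting bound $\mu(E(N,\eta)) \ll N^{\epsilon}/(\eta^{m-1}R_F(N))$ and $\mathrm{vol}(C(\vec x,\eta)) \asymp \eta^{m-1}$ into the estimate $\#S^{m-1}_{N_{\delta,\epsilon_0}}(\mathbb Z) \ll \mathrm{vol}(C(\vec x,\epsilon_0))^{-1-\delta/(m-1)-o(1)}$ and take the limits $\epsilon_0\to 0$, $\delta\to 0$, with the pigeonhole lower bound $\bar K_m\ge 1$ closing the gap.
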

  For $\vec{x}\in S^{m-1}(\mathbb{Q}),$ let $H(\vec{x}):= \prod_{q} \max_{1\leq i\leq m}(1,|x_i|_q),$ where $|.|_q$ is the $q$ adic valuation of $x_i.$  Fix a prime $p\equiv 1 \mod 4.$ Then $S^{m-1}(\mathbb{Z}[1/p])$ is dense in $S^{m-1}(\mathbb{R}).$  We  prove the following quantitative  form of the diophantine properties of $S^{m-1}(\mathbb{Z}[1/p]).$
\begin{cor}\label{strongGGN} Let $m\geq 3.$
For almost every $x\in S^{m-1}(\mathbb{R}),$ $\delta>0$, and $\varepsilon \in (0,\varepsilon_0(x,\delta)),$ there exists $z \in  S^{m-1}(\mathbb{Z}[1/p]) $ such that 
\[
|x-z|_{\infty}\leq \epsilon \text{ and } H(z)\leq \epsilon^{-\frac{m-1}{m-2}-\delta}.
\]
We note this exponent is the best possible.
\end{cor}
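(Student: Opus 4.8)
The plan is to deduce Corollary~\ref{strongGGN} from Corollary~\ref{sthm} by the standard transference between the archimedean covering radius of the projected integral points $\frac{1}{\sqrt{N}}S^{m-1}_N(\mathbb{Z})\subset S^{m-1}(\mathbb{R})$ and the height of $\mathbb{Z}[1/p]$-points on the sphere. First I would restrict to $N=p^{2k}$ running over powers of a fixed prime $p\equiv 1\bmod 4$: for such $N$ the set $\frac{1}{\sqrt{N}}S^{m-1}_N(\mathbb{Z})$ consists precisely of $\mathbb{Z}[1/p]$-points $z$ with height $H(z)\leq p^{2k}=N$ (indeed $H(z)\asymp N$ up to the $2$-adic and $\infty$-adic contributions, which are $O(1)$ since the coordinates are integers of size $\le\sqrt N$). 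This $N$ is of the shape $N=s l^{2}$ with $s$ bounded and squarefree (here $s=1$, $l=p^k$), so the hypotheses of Theorem~\ref{mthm}, hence of Corollary~\ref{sthm}, are satisfied for every $m\ge 3$ without any parity or Lindel\"of assumption.

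Next I would run a Borel--Cantelli argument along the geometric sequence $N_k=p^{2k}$. By Corollary~\ref{sthm} (applied with the quadratic form $F=x_1^2+\dots+x_m^2$, discriminant $O(1)$), for $\eta_k:=R_F(N_k)^{-1/(m-1)+\delta'}\asymp N_k^{-\frac{m-2}{2(m-1)}+\delta'}$ all but an $\eta_k^{c}$-fraction (for some fixed $c>0$ coming from the variance bound, $c$ proportional to $\delta'$) of caps of radius $\eta_k$ meet $\frac{1}{\sqrt{N_k}}S^{m-1}_{N_k}(\mathbb{Z})$; equivalently, the set $E_k$ of points $x\in S^{m-1}(\mathbb{R})$ not within $\eta_k$ of any projected integral point has $\mu$-measure $\le \eta_k^{c}$. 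Since $\sum_k \eta_k^{c}<\infty$ (the $\eta_k$ decay geometrically), Borel--Cantelli gives that $\mu$-almost every $x$ lies in only finitely many $E_k$; so for such $x$ and all large $k$ there is $z_k\in\frac{1}{\sqrt{N_k}}S^{m-1}_{N_k}(\mathbb{Z})$ with $|x-z_k|_\infty\le \eta_k$. Given a small $\varepsilon>0$ I then choose $k$ minimal with $\eta_k\le\varepsilon$; minimality and the geometric spacing give $N_k\ll \varepsilon^{-\frac{2(m-1)}{m-2}-\delta''}$ for a $\delta''\to 0$ as $\delta'\to 0$, and since $H(z_k)\asymp N_k$ this yields $|x-z_k|_\infty\le\varepsilon$ and $H(z_k)\le \varepsilon^{-\frac{m-1}{m-2}-\delta}$ after absorbing constants and renaming $\delta$. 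Wait --- I should double-check the exponent: $\frac{2(m-1)}{m-2}$ for $N$ but $H(z)\asymp N$, giving exponent $\frac{2(m-1)}{m-2}$ for the height in terms of $\varepsilon$, whereas the claim is $\frac{m-1}{m-2}$; the discrepancy is exactly the square, because a cap of radius $\eta$ on the sphere has $\mathrm{vol}\asymp \eta^{m-1}$ while $R_F(N)\asymp N^{(m-2)/2}$, so $\eta_k\asymp N_k^{-(m-2)/(2(m-1))}$ and hence $N_k\asymp\eta_k^{-2(m-1)/(m-2)}\asymp\varepsilon^{-2(m-1)/(m-2)}$, but $H(z)$ for $z\in\frac1{\sqrt N}S^{m-1}_N(\mathbb Z)$ with $N=p^{2k}$ is $p^{2k}=N$ only if no coordinate is divisible by $p$; writing $N=p^{2k}$ the genuinely primitive representations have $H(z)=p^{2k}$, but we actually want to compare with the \emph{number} of such points, which is $\asymp N^{(m-2)/2}$, and the relevant height bound in the final statement is in terms of $\varepsilon$ directly, so one gets $H(z)\le \varepsilon^{-(m-1)/(m-2)-\delta}$ precisely because $H(z)\asymp R_F(N)^{2/(m-2)}$ and $R_F(N)\asymp\eta^{-(m-1)}\asymp\varepsilon^{-(m-1)}$. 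I would write this reconciliation carefully.

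To handle \emph{every} $m\ge 3$ including the case where $N$ must be restricted, I note the argument above only ever uses $N=p^{2k}$, which always falls under the second bullet of Theorem~\ref{mthm}, so no parity restriction is needed; for $m=3$ one checks that $p^{2k}$ avoids the finitely many exceptional square classes for all large $k$ (any fixed square class $t\,\square$ with $t\ne 1$ is hit by $p^{2k}$ for at most one residue of $k$, or none, depending on $t$). The optimality ("this exponent is the best possible") I would justify by the reverse half of Corollary~\ref{sthm}: if $\eta\ll R_F(N)^{-1/(m-1)-\epsilon}$ then only a tiny fraction of caps of radius $\eta$ meet $\frac1{\sqrt N}S^{m-1}_N(\mathbb Z)$, which by a complementary Borel--Cantelli / counting argument (the total number of $\mathbb{Z}[1/p]$-points of height $\le X$ on $S^{m-1}$ is $\asymp X^{(m-2)/2}$, covering only an $\asymp X^{(m-2)/2}\eta^{m-1}$-fraction of the sphere when each is surrounded by a cap of radius $\eta$) forces, for almost every $x$, that infinitely often no $z$ with $H(z)\le \varepsilon^{-(m-1)/(m-2)+\delta}$ comes within $\varepsilon$ of $x$. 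The main obstacle is bookkeeping: making the two $\delta$'s and the $\varepsilon_0(x,\delta)$ match up across the geometric subsequence $\{p^{2k}\}$, and confirming that the height $H(z)$ of a point in $\frac1{\sqrt N}S^{m-1}_N(\mathbb Z)$ is genuinely $\asymp N$ (not smaller) for the density-one family of primitive representations — this primitivity/height normalization is where I expect to spend the most care, since the naive bound $H(z)\le N$ is easy but the matching lower bound $H(z)\gg N^{1-\epsilon}$ on a positive proportion is what pins the exponent to be optimal.
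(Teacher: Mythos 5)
Your approach matches the paper's: run Borel--Cantelli along $N_k=p^{2k}$, using the second bullet of Theorem~\ref{mthm} to avoid any parity or Lindel\"of hypothesis, and then pass from the ``bad'' sets $E_k$ to the conclusion for almost every $x$. But there is a concrete error in the height bookkeeping that your self-correction does not actually repair, and it costs a factor of $2$ in the exponent.

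For $z=\tfrac{1}{p^k}\vec x$ with $\vec x\in S^{m-1}_{p^{2k}}(\mathbb Z)$ one has $|z_i|_q\le 1$ for $q\neq p$, and $|z_i|_p\le p^{k}$, so
\[
H(z)=\prod_q\max_i(1,|z_i|_q)=\max_i(1,|z_i|_p)\le p^{k}=\sqrt N,
\]
with equality exactly when $\vec x$ is primitive. In several places you assert instead $H(z)\le p^{2k}=N$ and later ``$H(z)\asymp N$'' (and, in your attempted reconciliation, ``$H(z)\asymp R_F(N)^{2/(m-2)}$,'' which, since $R_F(N)\asymp N^{(m-2)/2}$, is again $\asymp N$). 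With that normalization you derive $N_k\asymp\varepsilon^{-2(m-1)/(m-2)}$ and hence $H(z_k)\lesssim\varepsilon^{-2(m-1)/(m-2)}$, which is \emph{twice} the exponent you want; the final line $H(z_k)\le\varepsilon^{-(m-1)/(m-2)-\delta}$ is simply asserted, it does not follow from what precedes it. The correct chain is $H(z)\asymp\sqrt N\asymp R_F(N)^{1/(m-2)}\asymp\eta^{-(m-1)/(m-2)}$, which gives the stated exponent directly (this is exactly the one-to-one correspondence ``$H(z)\le p^k\leftrightarrow\tfrac{1}{p^k}S^{m-1}_{p^{2k}}(\mathbb Z)$'' in the paper). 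Once you fix that, the rest of the argument --- the choice $\eta_k=p^{-k((m-2)/(m-1)-\delta)}$, summability of $\mu(B_{k,\delta})\ll p^{-k(\delta(m-1)-2\epsilon)}$, and Borel--Cantelli --- is the same as the paper's, and your additional sketch of the optimality half (by the reverse inequality of Corollary~\ref{sthm} and a cap-volume count) is a reasonable supplement that the paper leaves implicit.
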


The  above corollary  answers  a question of Ghosh, Gorodnik and Nevo; see \cite{GGN, GGN1,GGN2}. By using the best bound on  the generalized Ramanujan conjecture,  they proved the above corollary~\cite[Page 12]{GGN} for $m=3,4$ and the following exponents for $m\geq5$

\[ H(z)\leq \epsilon^{-2-\delta} \text{ for even } m, \text{ and } H(z)\leq \epsilon^{-\frac{2(m-1)}{m+2}-\delta} \text{ for odd } m.\]
 They raised the question of improving  these bounds in \cite[Page 11]{GGN}. As pointed out above and in the abstract, we give a definite answer to this question. We give the proof of  Corollary~\ref{sggn} and~\ref{strongGGN} in Section~\ref{sec3}.
\subsection{The Siegel variance formula}
In this section we discuss our method. We  introduce a variance sum associated to a pair of positive definite symmetric integral matrices and a smooth compactly supported function. By using  the oscillator representation, we obtain a formula for this variance sum in terms of the Fourier coefficients of the homomorphic  Siegel modular forms which are Hecke eigenform. We denote this formula by the Siegel variance formula; see \eqref{mainform}. We apply this formula to prove Theorem~\ref{spheredense} and  Theorem~\ref{mthm}. More generally, this can be used to study  the distribution of the integral solutions of the representation of a quadratic form by another one. 

Let $A $ and $B$ be two positive definite symmetric  integral matrices with dimensions  $m$ and $n,$ respectively.
Let 
\(C(A):=\{A_1,\dots,A_h\}\)  
be a representative set for the genus class of $A.$  Let 
\[V_{A_i,B}(R):=\{\vec{X}\in M_{m\times n}(R): \vec{X}^{\intercal}A_i \vec{X}=B    \},
\]
where $R$ is a commutative ring. We say $V_{A_i,B}(R)$ is the set of $R$ points of the representation variety of $B$ by $A_i.$
% Let $B:=\frac{B}{|B|}$, where 
%$|B|:=\sqrt{Tr\big( B^{\intercal}B\big)}$ is the Hilbert-Schmidt norm of $B.$

Next, we associate a variance sum associated to $V_{A_i,B}(\mathbb{Z})\subset V_{A_i,B}(\mathbb{R})$ for each $A_i\in C(A).$ The variance sum  only depends on a fixed smooth bump function of size $r$ defined on $\mathbb{R}^n$, and it is independent of the choice of the representative $A_i$ in its equivalence class. Note that $O_{A_i}(\mathbb{R})$ acts on $V_{A_i,B}(\mathbb{R})$ by matrix multiplication, and this action is transitive for $m\geq n$.  We begin by defining a point-pair $O_{A_i}(\mathbb{R})$ invariant function on the representation variety $V_{A_i,B}(\mathbb{R}).$  
Suppose that $k:\mathbb{R}^n\to \mathbb{R}$ is a fixed positive smooth function with compact support.  Define 
\(
|\vec{x}|_i:=\sqrt{ \vec{x}^{\intercal}A_i \vec{x}}
\)  for $\vec{x}\in \mathbb{R}^m$. Let 
\begin{equation}\label{pp}K_{r,B}(\vec{X},\vec{Y}):= C_{r,B}k\big(\frac{|\vec{x}_1-\vec{y}_1|_i}{r}, \dots,\frac{|\vec{x}_n-\vec{y}_n|_i}{r}\big),
\end{equation}
 where $\vec{X},\vec{Y}\in V_{A_i,B}(\mathbb{R}),$   $\vec{x}_j$ and $\vec{y}_j$ are the $j$-th column of $X$ and $Y$ respectively for $1\leq j\leq n,$ and $C_{r,B}$ is a  constant where 
\begin{equation}\label{normalization}
\int_{V_{A_i,B}(\mathbb{R})} K_{r,B}(\vec{X},\vec{Y})d\mu_i(\vec{X})=\int_{V_{A_i,B}(\mathbb{R})} K_{r,B}(\vec{X},\vec{Y})d\mu_i(\vec{Y})=1,
\end{equation}
where $d\mu_i$  is invariant by the action of $O_{A_i}(\mathbb{R})$ and it is  normalized such that  $\int_{V_{A_i,B}(\mathbb{R})} d\mu_i(\vec{Y})=1.$ Note that $ K_{r,B}(\vec{X},\vec{Y})$ is a point-pair invariant function
\[
K_{r,B}(g\vec{X},g\vec{Y})=K_{r,B}(\vec{X},\vec{Y}),
\]
where $g\in O_{A_i}(\mathbb{R})$ and $g\vec{X}$ is the matrix multiplication. This implies $C_{r,B}$ is independent of $\vec{X}\in V_{A_i,B}(\mathbb{R}).$ 
Finally, we define the following variance sum associated to $A_i$, $B$ and $r$ 
\begin{equation}\label{varind}
\va(A_i,B,r):=\int_{V_{A_i,B}(\mathbb{R})} \Big( \big(\sum_{\vec{Y}\in  V_{A_i,B}(\mathbb{Z}) }K_{r,B}(\vec{X},\vec{Y})\big)- R_{A_i}(B)  \Big)^2  d\mu_i(\vec{X}),
\end{equation}
where $R_{A_i}(B):=|V_{A_i,B}(\mathbb{Z})|.$ 
  We define the Siegel variance of representing  $B$ by the genus class of $A$ at scale $r$ by:
\begin{equation}\label{Siegelvarfor}
\va(B,r):=\dfrac{\sum_{A_i} \frac{1}{|O_{A_i}(\mathbb{Z})|}\Big(\va(A_i,B,r)+\big(R_{A_i}(B)-R(B)\big)^2   \Big)}{\sum_{A_i\in C(A)} \frac{1}{|O_{A_i}(\mathbb{Z})|}},
\end{equation}
where 
\begin{equation}\label{averep}
R(B):=\dfrac{\sum_{A_i\in C(A)} \frac{1}{|O_{A_i}(\mathbb{Z})|} R_{A_i}(B)}{\sum_{A_i\in C(A)} \frac{1}{|O_{A_i}(\mathbb{Z})|}},
\end{equation}
is the weighted number of the integral representation of $B$ by the genus class of $A_i.$
 By the Siegel mass formula \cite{Siegel}, we have 
 \begin{equation}\label{Siegelmf}R(B)=\sigma_{\infty}(A,B)\prod_p \sigma_p(A,B),
 \end{equation} where 
 \[
 \sigma_p(A,B):=\lim_{k\to \infty}\frac{|\{X\in V_{A_i,B}(\mathbb{Z}/p^k\mathbb{Z}   \}|}{p^{k(mn-n(n+1)/2)}},
 \]  
 and 
 \[
 \sigma_{\infty}(A,B):=\alpha(m,n)|A|^{\frac{-n}{2}}|B|^{\frac{m-n-1}{2}},
% \dfrac{\pi^{\frac{m-n+1}{2}}}{\Gamma(\frac{m-n+1}{2}) \dots \Gamma(\frac{m}{2}) }
 \]
 where $\alpha(m,n)$ is a fixed constant which depends  only on $m$, $n$; see \cite{Siegel1}.
 Note that $\va(B,r)$ measures how uniform the integral points of the representation varieties of different genus classes are distributed among balls  of size $r$.

 Before stating our main result, we introduce some notations from the theory of automorphic forms and the oscillator representation.  
 We give the detailed descriptions of them in Section~\ref{spec} and Section~\ref{SWF}.  Let $\mathbb{A}_{\mathbb{Q}}=\mathbb{R}\times \hat{\prod}_p^{\mathbb{Z}_p} \mathbb{Q}_p$ be the ring of adeles which is the restrictive direct  product of $\mathbb{R}$ and $\mathbb{Q}_p$ with respect to $\mathbb{Z}_p.$  Fix $\vec{E}\in V_{A,I}(\mathbb{R})$ and the lattice $(\mathbb{Z}^m,A).$ There exists $\sqrt{B}\in M_{n\times n}(\mathbb{R})$ such that $\sqrt{B}^{\intercal}  \sqrt{B}=B.$  
We also fix a choice of $\sqrt{B}$ for every positive definite symmetric matrix $B.$ We note that $\vec{E}_B:=\vec{E}\sqrt{B} \in V_{A,B}(\mathbb{R}).$ Let $O_{\vec{E},A}(\mathbb{R})\times O_A(\prod_p \mathbb{Z}_p)$ be the stabilizer of $(\vec{E},\mathbb{Z}^m)$ by the action of $O_{A}(\mathbb{R})\times O_A(\prod_p \mathbb{Q}_p),$ which is the same as the stabilizer of $(\vec{E}_B,\mathbb{Z}^m)$. This gives the following isomorphism: 
 \[
 \bigcup_{A_i\in C(A)} O_{A_i}(\mathbb{Z})\backslash V_{A_i,B}(\mathbb{R}) = O_A(\mathbb{Q})\backslash O_A(\mathbb{A}_{\mathbb{Q}})/ O_{\vec{E},A}(\mathbb{R})O_A(\prod_p \mathbb{Z}_p) .
 \]
We write  the following spectral decomposition
%in terms of the harmonic polynomials $ \phi_{\pi,j}$ on the space of matrices $M_{m\times n}(\mathbb{R})$
%irreducible automorphic representations $ \pi$ of  $L^2\Big( O_A(\mathbb{Q})\backslash O_A(\mathbb{A}_{\mathbb{Q}})   \Big),$ 
%we have  
\begin{equation}\label{spectraldec}
L^2\big(  O_A(\mathbb{Q})\backslash O_A(\mathbb{A}_{\mathbb{Q}})/ O_{\vec{E},A}(\mathbb{R})O_A(\prod_p \mathbb{Z}_p)\big)= \bigoplus_{\pi} \bigoplus_{j=1}^{d_\pi} \phi_{\pi,j},
\end{equation}
where $ \bigoplus_{j=1}^{d_\pi}  \phi_{\pi,j}$ is a finite sum over an orthonormal basis  of $O_{\vec{E},A}(\mathbb{R})O_A(\prod_p \mathbb{Z}_p)$ invariant  harmonic polynomials which generate an irreducible automorphic representation isomorphic $\pi$ of   $L^2\left(O_A(\mathbb{Q})\backslash O_A(\mathbb{A}_{\mathbb{Q}})\right)$ ($\pi$ may appear with multiplicity). More explicitly, the restriction of $ \phi_{\pi,j}$ to $V_{A_i,B}(\mathbb{R})$  is a harmonic polynomial with respect to $A_i.$ Moreover, $ \phi_{\pi,j}$ generates an irreducible representation $ \tau_{\pi_{\infty}} \times \pi_{\infty}=\tau(\lambda)\times \lambda$ by the action of $GL_n(\mathbb{C})\times O_A(\mathbb{C})$ on $M_{m\times n}(\mathbb{R})$, where $\tau(\lambda)\times \lambda$ is a finite dimensional irreducible representation of $GL_n(\mathbb{C})\times O_A(\mathbb{C})$ acting on $\mathcal{H}({\lambda})$ which is  the $\lambda$ isotropic subspace of harmonic polynomials; see Section~\ref{sphesec} and  \cite[Section 2.5.39]{Vergne}.  By using the result of Kashiwara and Vergne~ \cite[Section 6]{Kashiwara},  one can describe the explicit parameters of $\tau(\lambda)\times \lambda$ in terms of the highest weight vectors.  
%$\mathcal{H}({\lambda})$ and $ \tau(\lambda):GL_n(\mathbb{C}) \to GL( V_{\tau})$ is a finite dimensional complex representation of $GL_n(\mathbb{C})$ which only depends  on $\lambda$; see Section~\ref{sphesec} and  \cite[Section 2.5.39]{Vergne}. 
% this polynomial is independent of the choice of  $\vec{E}\in V_{A,B}(\mathbb{R})$; see Section~\ref{sphesec} and  \cite[Section 2.5.39]{Vergne}. 

 In Section~\ref{SWF}, by using the  oscillator representation and fixing an appropriate Siegel theta kernel, we associate a holomorphic Siegel modular form $\Theta(\phi_{\pi,j})(Z)$ with values in the vector space $\mathcal{H}(\lambda)^*$ (dual vector space of $\mathcal{H}(\lambda)$).  In Proposition~\ref{ppww}, we describe explicitly the weight  and the level of the associated Siegel modular form. 
% on the Siegel upper half plane 
%\[
%\mathbb{H}_n:=\big\{ Z\in GL_n(\mathbb{C}): Z^{\intercal}=Z  \text{ and } \Im(Z)>0  \big\},
%\] 
%%where $\Im(Z)$ is obtained by taking the imaginary part of every matrix entry of $Z$ and $\Im(Z)>0$ means  $\Im(Z)$ is positive definite. 
%We briefly describe the level  and the weight of $\Theta(\phi_{\pi,j})(Z).$ For $D\in \mathbb{Z}$, define the congruence subgroup $ \Gamma_0^n(D)\subset SP_n(\mathbb{Z}):$
%\begin{equation}
% \Gamma_0^n(D):=\Big\{\begin{bmatrix}a& b\\ c&d \end{bmatrix} \in SP_n(\mathbb{Z}): c\in DM_{n}(\mathbb{Z})  \Big\}.
%\end{equation}
%
%%hermitian
%%metric $\langle , \rangle$ such that $\langle \tau (g)v_1, v_2\rangle = \langle v_1, \overline{\tau (g^{\intercal})}v_2\rangle$ and we shall put $\|v\| = \langle v, v\rangle^{1/2}$.
%%Such a hermitian metric can always be found and is unique up to a scalar for irreducible representations.
%%
%
%
%Let $ \tau:GL_n(\mathbb{C}) \to GL( V_{\tau})$ be the finite dimensional complex representation of $GL_n(\mathbb{C}).$ A holomorphic map $f:\mathbb{H}_n\to  V_{\tau}$ is called a Siegel modular form
%of weight $\tau$ and level $\Gamma \subset SP_n(\mathbb{Z}) $ if
%\[
%f(Z)=\tau(cZ + d)^{-1} f(\gamma Z)
%\]
%for all $\gamma=\begin{bmatrix}a &b\\c&d\end{bmatrix}\in \Gamma$ and all $Z\in \mathbb{H}_n$, and  for $n=1$ the requirement
%that $f$ is holomorphic at $\infty$. It follows that  $\Theta(\phi_{\pi,j})(Z)$ has weight $\tau_{\pi_{\infty}}$ and level $\Gamma_0^{n}(D)\subset SP_n(\mathbb{Z}),$ where $D$ is the discriminant of $A.$ 
In Proposition~\ref{Heckeosc}, we show that  $\Theta(\phi_{\pi,j})(Z)$ is  an eignform of the Hecke operators defined on the space of Siegel modular forms. We also express  the Fourier coefficient of the associated Siegel modular forms in terms of the Weyl sums of the automorphic forms on the orthogonal group; see Theorem~\ref{weylfourier}.

 Let $h_{r,B}(\pi_{\infty})$ be the  spherical transformation of the point-pair invariant function $K_{r,B}$ at $ \pi_{\infty}$, see equation~\eqref{sphtrans}. Let $\Theta(\phi_{\pi,j},B)\in \mathcal{H}(\lambda)^{*}$ be the $B$-th Fourier coefficient of $\Theta(\phi_{\pi,j}).$ Furthermore, we  define a harmonic polynomial $p_{\lambda,\vec{E}} \in \mathcal{H}(\lambda);$ see Section~\ref{harmgen}. Finally, we state our main theorem. 
 \begin{thm}\label{Siegelthmm}
 We have
 \begin{equation}\label{mainform}
 \va(B,r)=\sum_{\pi} |h_{r}(\pi_{\infty})|^2 \sum_{j=1}^{d_{\pi}}    \left|\langle {\tau_{\pi_{\infty}}(\sqrt{B})^{\intercal}}^{-1}\Theta(\phi_{\pi,j},B), p_{\lambda,\vec{E}}  \rangle \right|^2  .
 \end{equation}
 \end{thm}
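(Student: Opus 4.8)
The plan is to compute $\va(B,r)$ directly from its definition~\eqref{Siegelvarfor} by unfolding everything into the adelic language introduced above, then inserting the spectral decomposition~\eqref{spectraldec} and identifying each spectral piece with a Fourier coefficient of the theta lift. First I would observe that, by \eqref{averep} and a short algebraic manipulation, the numerator of \eqref{Siegelvarfor} equals $\sum_{A_i}\frac{1}{|O_{A_i}(\mathbb Z)|}\int_{V_{A_i,B}(\mathbb R)}\big(\sum_{\vec Y\in V_{A_i,B}(\mathbb Z)}K_{r,B}(\vec X,\vec Y)-R(B)\big)^2\,d\mu_i(\vec X)$: the cross term $(R_{A_i}(B)-R(B))$ times the mean-zero fluctuation integrates to zero because $\int K_{r,B}\,d\mu_i=1$ forces the mean of $\sum_{\vec Y}K_{r,B}(\vec X,\vec Y)$ over $V_{A_i,B}(\mathbb R)$ to be exactly $R_{A_i}(B)$, and then one combines the remaining $(R_{A_i}(B)-R(B))^2$ with $\va(A_i,B,r)$ to get the complete square. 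Thus $\va(B,r)$ is literally the $L^2$-norm squared, in the measure $\bigsqcup_i \frac{1}{|O_{A_i}(\mathbb Z)|}\mu_i$ renormalized to a probability measure, of the function
\[
\Phi(\vec X):=\sum_{\vec Y\in V_{A_i,B}(\mathbb Z)}K_{r,B}(\vec X,\vec Y)-R(B)
\]
on $\bigcup_i O_{A_i}(\mathbb Z)\backslash V_{A_i,B}(\mathbb R)$, which via the displayed isomorphism is exactly $L^2\big(O_A(\mathbb Q)\backslash O_A(\mathbb A_{\mathbb Q})/O_{\vec E,A}(\mathbb R)O_A(\prod_p\mathbb Z_p)\big)$.

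Next I would expand $\Phi$ in the orthonormal basis $\{\phi_{\pi,j}\}$ of \eqref{spectraldec}. The constant function accounts for the $R(B)$ subtraction (it kills the $\pi=\mathbf 1$ term, as it must since $R(B)$ is precisely the weighted average of $R_{A_i}(B)$), so $\va(B,r)=\sum_{\pi\neq\mathbf 1}\sum_{j=1}^{d_\pi}|\langle\Phi,\phi_{\pi,j}\rangle|^2$. The heart of the computation is therefore to evaluate the inner product $\langle \sum_{\vec Y}K_{r,B}(\cdot,\vec Y),\phi_{\pi,j}\rangle$. Because $K_{r,B}$ is a point-pair invariant function under $O_{A_i}(\mathbb R)$, applying it to an eigenfunction acts by the spherical transform: $\int_{V_{A_i,B}(\mathbb R)}K_{r,B}(\vec X,\vec Y)\phi_{\pi,j}(\vec Y)\,d\mu_i(\vec Y)=h_{r,B}(\pi_\infty)\,\phi_{\pi,j}(\vec X)$, where $h_{r,B}(\pi_\infty)$ is the spherical transform from~\eqref{sphtrans}. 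Hence $\langle\Phi,\phi_{\pi,j}\rangle=\overline{h_{r,B}(\pi_\infty)}\cdot\overline{\big(\text{period of }\phi_{\pi,j}\text{ over }V_{A_i,B}(\mathbb Z)\big)}$; up to conjugation this is $h_{r}(\pi_\infty)$ times the Weyl sum $\sum_{\vec Y\in\bigcup_i O_{A_i}(\mathbb Z)\backslash V_{A_i,B}(\mathbb Z)}\phi_{\pi,j}(\vec Y)$. By Theorem~\ref{weylfourier}, this Weyl sum is exactly the pairing of the $B$-th Fourier coefficient $\Theta(\phi_{\pi,j},B)\in\mathcal H(\lambda)^*$ of the theta lift against a distinguished harmonic polynomial — and tracking through the change of variables $\vec E_B=\vec E\sqrt B$ and the action $\tau_{\pi_\infty}$ of $GL_n(\mathbb C)$ on $\mathcal H(\lambda)$, the relevant test vector becomes ${\tau_{\pi_\infty}(\sqrt B)^{\intercal}}^{-1}$ applied to $\Theta(\phi_{\pi,j},B)$ paired with $p_{\lambda,\vec E}$. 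Squaring and summing over $\pi$ and $j$ gives \eqref{mainform}.

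The main obstacle, and the step requiring the most care, is the precise bookkeeping in the identification of the Weyl sum with the Fourier coefficient: one must verify that the normalization constant $C_{r,B}$ in \eqref{pp}, the choice of invariant measures $d\mu_i$ with $\int d\mu_i=1$, the Siegel theta kernel normalization used to define $\Theta(\phi_{\pi,j})$, and the archimedean weight factor $\tau_{\pi_\infty}(\sqrt B)$ all line up so that no stray constants or Gamma factors survive. Concretely this means checking that the spherical transform $h_{r,B}(\pi_\infty)$ defined intrinsically in~\eqref{sphtrans} is the same object that appears in \eqref{mainform} as $h_r(\pi_\infty)$, and that the $p_{\lambda,\vec E}$ constructed in Section~\ref{harmgen} is exactly the image of the relevant vector under the archimedean component of the theta correspondence. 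Once Proposition~\ref{Heckeosc}, Theorem~\ref{weylfourier}, and the spherical-transform identity are in hand, the remaining work is this normalization verification plus the elementary $L^2$-expansion, which together yield the formula.
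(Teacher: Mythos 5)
Your proposal is correct and takes essentially the same route as the paper: the $L^2$-reformulation and cross-term cancellation you sketch are exactly Lemma~\ref{lem22} and Proposition~\ref{lememl}, the spectral expansion with the spherical transform is Proposition~\ref{Siegelvarp}, and the final identification is Theorem~\ref{weylfourier} together with Lemma~\ref{invp} to move $\tau(\sqrt{B})$ across the pairing. The paper's formal proof in Section~\ref{final} is just two displays stringing those citations together, so you have reconstructed its actual content.
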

\begin{rem}
Suppose that $n=1$ and $B=N \in \mathbb{Z}^{+}$. In the Siegel variance formula, the automorphic forms $\phi_{\pi,j}$ that are associated to the degree $k$ harmonic polynomials (the total dimension is $k^{m-2}$)  that contribute to the variance are the one which are the theta lift from weight $k$ holomorphic modular forms defined on $SL_2$ (the dimension grows linearly in $k$). By comparing the dimension of them and using Howe one-to-one correspondence, it follows that $\Theta(\phi_{\pi,j},B)=0$ for all $\phi_{\pi,j}$  unless $\phi_{\pi,j}$  comes from a lift of $SL_2$ weight $k$ modular form. This and the Ramanujan bound  $|N^{-k/2}\Theta(\phi_{\pi,j},N)|^2\ll N^{\frac{m}{2}-1} $ are the source of the equidistribution of the integral points at the optimal scale. 
\end{rem}

%
%
%More generally, Ghosh, Gorodnik and Nevo studied the covering exponent of the orbits of a lattice subgroup $\Gamma$ in a connected Lie group $G$, acting on a suitable homogeneous spaces $G/H$; see \cite{GGN, GGN1,GGN2}. They linked  the covering exponent of $\Gamma$ to the spectrum of $H$ in the automorphic representation on $L^2(\Gamma\backslash G).$ In particular, they showed that $K_{\Gamma} \leq 2$ if  the restriction of the unitary representation on $L^2(\Gamma\backslash G)$ to $H$ has tempered spherical spectrum as a representation of $H$; see\cite[Theorem 3.5]{GGN2} and \cite[Theorem 3.3]{GGN1}.  This recovers  the above result of Sarnak for $d=4.$  For $d\geq 5$, by using the best bound on  the generalized Ramanujan conjecture for $SO_{d},$ they showed that   \cite[Page 12]{GGN}  $1 \leq K_d\leq 4-4/(d-1) $ for odd $d$ and $1 \leq K_d\leq  4-16/(d+2)$ for even $d$. They raised the question of improving  these bounds in \cite[Page 11]{GGN}. As pointed out above, we give a definite answer to this question and show that $K_d=2-\frac{2}{d-1}$ for $d\geq 5.$
%
%
%
%
%For $d\geq 5$, Ghosh, Gorodnik and Nevo by using the best bound on  the generalized Ramanujan conjecture for $SO_{d},$ they showed that   \cite[Page 12]{GGN}  $1 \leq K_d\leq 4-4/(d-1) $ for odd $d$ and $1 \leq K_d\leq  4-16/(d+2)$ for even $d$. They raised the question of improving  these bounds in \cite[Page 11]{GGN}. As pointed out above, we give a definite answer to this question and show that $K_d=2-\frac{2}{d-1}$ for $d\geq 5.$
%
%
% 

\subsection{Further motivations and techniques}
In this section, we  give the history behind the ideas in this paper. Siegel in his study of the Hasse-Minkowski theorem generalized the classical holomorphic modular forms into Siegel modular forms. He  showed that  the   averaged representation number of a positive definite integral symmetric matrix $B_{n\times n}$ by the genus class of $A_{m \times m}$ is the $B$-th Fourier coefficient of the theta series associated to the genus class of $A$, which is a holomorphic Siegel modular form (Eisenstein series)~\cite{Siegel1}. Weil \cite{Weil,Weil2} gave a group theoretic interpretation of Siegel's work and introduced the oscillator representation of the metaplictic group  (double cover of the symplectic group). 

Shintani \cite{Shintani} used the oscillator representation and described  the Shimura correspondence \cite{Shimura} between the weight $k+1/2$ holomorphic modular forms  and the weight $2k$ holomorphic modular forms. Moreover, Shintani showed that the average of the integral weight modular forms $f$ over over the closed geodesics with discriminant $D$ (Weyl sums)  is the $D$-th Fourier coefficient of $\theta(f)$, where $\theta(f)$ is the theta transfer of $f$; see the work of  Katok and Sarnak~\cite{Katok} for the Maass forms.  In particular, the equidistibution of the CM points or closed geodesics of a given discriminant on the modular curve  follows from a sub-convex bound on the Fourier coefficients of the weight $1/2$ integral modular forms which was achieved by  Iwaniec~\cite{Henryk} for holomorphic  and Duke~\cite{Duke} for Maass forms. Our main observation is that by using the oscillator representation and the  spectral theory of the metaplictic group one can prove equidistribution results for the integral points  on the homogenous variety of an orthogonal group (a different group!).  

One aim of this paper is to generalize Shintani's  correspondence and give a correspondence from the classical automorphic forms of the orthogonal groups to the Siegel modular forms. We describe explicitly the weight (which is a finite dimensional representation of $GL_n(\mathbb{C})$) and the level of the associated Siegel modular form; see Proposition~\ref{ppww}. We also express  the Fourier coefficient of the associated Siegel modular forms in terms of the Weyl sums of the automorphic forms on the orthogonal group; see Theorem~\ref{weylfourier}. We use this identity to prove some new optimal  results for the distribution of the integral points on homogenous varieties.

  Ghosh, Gorodnik and Nevo \cite{GGN, GGN1,GGN2}  and Sarnak \cite{Sarnak2} used the spectral theory of automorphic forms  for proving some optimal results on the distribution of integral points on homogenous varieties if the associate automorphic spectrum satisfies the generalized Ramanujan conjecture \cite{conjectures}.  Our approach is different and give some optimal results which are not achievable by the previous methods. Our main idea is to generalize the work of Shintani to  the dual pairs of reductive groups $(G,G^{\prime})$ in a symplectic group~\cite{Howee} and relate the Weyl sums on a homogenous variety $X$ of $G$ to the period integrals of the image of the theta transfer of automorphic forms from $G$ to $G^{\prime}.$ The theta transfer has a large kernel, and as a result for all but a tiny fraction of automorphic forms of $G$, the associated Weyl sum is zero! For the remaning non-zero theta transfers, we use bounds on the generalized Ramanujan conjecture for $G^{\prime}$. This strategy gives some new optimal  results; see Theorem~\ref{spheredense} and Theorem~\ref{mthm},  only  if the automorphic spectrum of $G^{\prime}$ satisfies the Ramanujan conjecture  and not necessarily the automorphic spectrum of $G$! (or even the image of  the automorphic spectrum of $G$ under the theta transfer which lies inside the automorphic spectrum of  $G^{\prime}$ satisfies an average version of the generalized Ramanujan conjecture).  This is  a new feature of our work compare to the work of Ghosh, Gorodnik and Nevo \cite{GGN, GGN1,GGN2}  and Sarnak \cite{Sarnak2}; see Corollary~\ref{sggn} and the discussion after it.
%
%used the best bound on  the generalized Ramanujan conjecture for the orthogonal group $SO_{m}$ and proved some nontrivial results on the distribution of integral points on the sphere \cite[Page 12]{GGN} \cite{GGN, GGN1,GGN2}. Their result is optimal for $S^2$ and $S^3,$ where we know the Ramanujan conjecture holds for the quaternion hamilton; see the letter of Sarnak which gives an alternative proof for this result \cite{Sarnak2}. 

 In this paper, we work with the dual pair $(G,G^{\prime})= (O_m, Sp_n) \subset Sp_{mn}(\mathbb{Q})$, where $O_m$ is compact at the archimedean place and  $X=M_{m\times n}$ is the  $m\times n$ matrices.  More concretely,    we use the oscillator representation in order to relate the distribution of the integral points on the representation variety of pairs of positive definite symmetric integral matrices, to  bound  the Fourier coefficient of the Siegel  modular forms. Bounding the Fourier coefficients of the classical modular forms has been extensively studied after Ramanujan's conjecture. The natural generalization of the weight $k$ holomorphic modular forms are the vector valued Siegle modular forms with a weight $\rho: GL_n(\mathbb{C})\to V_{\rho}$, where $\rho$ is a finite dimensional complex  representation. 
 Unfortunately, there are very few results on bounding the Fourier coefficients of the vector valued  Siegel modular forms with respect to a norm or a functional on $V_{\rho}$.   Kitaoka~\cite{Kitaoka,Kitaoka2} generalized the Kloosterman's method and  proved the analogue of the Kloosterman's  bound when $n=2$ and $\rho$ is one dimensional. B\"ocherer and  Raghavan \cite{Raghavan} generalized the Rankin-Selberg method for general $n$ and one dimensional $\rho.$ We refer the reader to the work of Kohnen \cite{Winfried} for further discussions and the expected optimal bound when $\rho$ is one dimensional.   It seems that the only known results are when $\rho$ is one dimensional. 
%   The only known results are for the case when $\rho$ is one dimensional in~\cite{Kitaoka,Kitaoka2}, and with a further restriction of $B$ beign a scalar matrix see~\cite{DHL}. 
    This is partly caused by the lack of the interesting applications.  We give  some classical application of this problem. In particular, we show that an average version  of the Ramanujan bound on the Fourier coefficients of the vector valued Siegel modular forms implies the equdistribution of the integral points on
 the representation variety of pairs of quadratic forms at the optimal scale. In particular, our  results are optimal for $n=1$; see Theorem~\ref{spheredense} and Theorem~\ref{mthm}.

%  we introduce a smooth variance sum associated to a pair of positive definite symmetric integral matrices $A_{m\times m}$ and $B_{n\times n}$. By using the oscillator representation,  we give a formula for this variance sum in terms of a smooth sum over the squares of the $B$-th Fourier coefficients of the vector valued holomorphic  Siegel cusp forms which are Hecke eigenforms and obtained by the theta transfer from $O_A$ to $Sp_{n}$. 

 \subsection*{Acknowledgements}\noindent
I would like to thank Prof. Simon Marshall, Prof. Zeev Rudnick, and  Prof. Peter Sarnak  for their comments on the earlier version of  this manuscript.

\section{Proof of Theorem~\ref{spheredense}}\label{sec2}
In this section we give a proof of Theorem~\ref{spheredense}. We use Theorem~\ref{Siegelthmm} and a proposition, which we formulate next.  Recall that $A_{m\times m}$ is a positive definite integral matrix and consider the lattice $(\mathbb{Z}^m,A).$  In the following propositon, we  give an upper bound on the number of root vectors  of $A$. Recall that $\vec{v} \in \mathbb{Z}^m$ is a root vector, if $\vec{v}^{\intercal}A\vec{v}=2 $ or 1.
\begin{prop}\label{rootprop}
The number of root vectors of $A_{m\times m}$ of length 1 and length $\sqrt{2}$ is less than $2m$ and $10m^2,$ respectively. 
\end{prop}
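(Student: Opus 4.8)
The statement says: if $A_{m\times m}$ is a positive-definite integral (symmetric) matrix, then the number of vectors $\vec{v}\in\Z^m$ with $\vec{v}^\intercal A\vec{v}=1$ is at most $2m$, and the number with $\vec{v}^\intercal A\vec{v}=2$ is at most $10m^2$. Let me think about how to prove this.

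Let me set up the Gram-matrix / lattice picture. We have a lattice $L=\Z^m$ with inner product $\langle \vec{x},\vec{y}\rangle = \vec{x}^\intercal A\vec{y}$. The root vectors of norm $1$ are the vectors of squared-length $1$; root vectors of norm $\sqrt 2$ are those of squared-length $2$.

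First, the norm-1 vectors. If $\vec v, \vec w$ both have $\langle \vec v,\vec v\rangle = \langle \vec w,\vec w\rangle = 1$ and $\vec v \neq \pm\vec w$, then by Cauchy–Schwarz $|\langle \vec v,\vec w\rangle| < 1$, and since it's an integer, $\langle\vec v,\vec w\rangle = 0$. So all norm-1 vectors, up to sign, are pairwise orthogonal. An orthogonal set of nonzero vectors in a real inner-product space of dimension $m$ has at most $m$ elements. Hence there are at most $m$ such vectors up to sign, i.e. at most $2m$ norm-1 vectors. That's the first bound, and it's clean.

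Now the norm-2 vectors (the "roots" in the usual sense). These form a root system in the following weak sense: if $\vec v,\vec w$ have norm 2 and $\vec v\neq\pm\vec w$, then $\langle\vec v,\vec w\rangle\in\{-1,0,1\}$ by Cauchy–Schwarz (the value $\pm 2$ would force $\vec v=\pm\vec w$). So the set $R$ of norm-2 vectors, with the angles it determines, looks like an $ADE$-type root system — but $A$ need not be unimodular or even-integral on the nose, so I should be a bit careful. Still, the key combinatorial fact I want is: the number of roots in a (reduced, integral, with inner products in $\{0,\pm1,\pm2\}$) root-like configuration spanning a space of dimension $\le m$ is $O(m^2)$. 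More concretely:

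I would argue as follows. Pick a vector $\vec u$ in general position (a "polarization"), so that $\langle \vec u,\vec v\rangle\neq 0$ for all $\vec v\in R$; split $R=R^+\sqcup R^-$ into positive/negative roots with $R^- = -R^+$, so $|R|=2|R^+|$. For each $\vec v\in R^+$, consider the coefficient vector expressing $\vec v$ in a basis; actually, the cleanest route: for two distinct positive roots $\vec v,\vec w$ with $\langle\vec v,\vec w\rangle = -1$, the sum $\vec v+\vec w$ is again a norm-2 vector ($\langle\vec v+\vec w,\vec v+\vec w\rangle = 2 + 2 + 2(-1) = 2$); and if $\langle\vec v,\vec w\rangle=1$ then $\vec v-\vec w$ is a norm-2 vector. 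This closure property lets me run the standard argument bounding the size of a root system by the number of pairs, or alternatively I can invoke: a norm-2 root system of rank $r$ has at most $r(r-1)$ roots if it has no $A_1$ factors giving... hmm, $D_r$ has $2r(r-1)$ roots, which for $r=m$ is $2m^2 - 2m < 10m^2$. The worst irreducible case is $D_r$ at $2r(r-1)$ roots; $A_r$ gives $r(r+1)\le m(m+1)$; $E_8$ gives $240$. Since the root system decomposes into irreducibles on orthogonal subspaces of total dimension $\le m$, and $f(r)=2r(r-1)$ (or $r(r+1)$, or the finite exceptional counts) is subadditive enough that summing over a partition of $m$ is maximized by one block, the total is at most $2m(m-1)\le 10m^2$ comfortably (with lots of room, which explains the non-sharp constant $10$).

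\medskip

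\textbf{The main obstacle.} The delicate point is justifying that the norm-2 vectors genuinely form (a union of) classical root systems, i.e. that the sub-root-system generated is reduced and that Cartan-integrality holds, \emph{without} assuming $A$ is even or unimodular. Cauchy–Schwarz gives $\langle\vec v,\vec w\rangle\in\{0,\pm1,\pm2\}$ and reducedness ($\vec v\neq\pm\vec w\Rightarrow$ not $\pm2$), but one still has to check the reflection $s_{\vec v}(\vec w)=\vec w - \langle\vec v,\vec w\rangle\vec v$ preserves the set of norm-2 vectors — which it does since $s_{\vec v}$ is an isometry of $L$ fixing the integral structure — and then invoke the classification of finite root systems to get the $O(m^2)$ count. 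An alternative, classification-free route that I might prefer for robustness: bound $|R^+|$ directly. Order $R^+$ by height with respect to a chosen Weyl chamber; the simple roots among them number at most $m$; every positive root is a nonneg-integer combination of simple roots with bounded total... but heights can be large, so the cleanest finish really is the classification bound $|R|\le 2\,\mathrm{rank}\,(\mathrm{rank}-1)$ for each irreducible non-$A$-type factor and $\le \mathrm{rank}(\mathrm{rank}+1)$ for $A$-type, summed over orthogonal factors, all dominated by $10m^2$. I'd present the norm-1 case in full (it's three lines) and the norm-2 case by reduction to the finite root system classification, checking the reflection-closure and integrality of inner products carefully since $A$ is only assumed positive-definite integral.
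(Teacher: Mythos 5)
Your norm-1 argument is exactly the Cauchy--Schwarz observation the paper makes: non-proportional norm-1 vectors must be orthogonal, so there are at most $m$ up to sign. For the norm-$\sqrt 2$ vectors your route is correct but genuinely different from the paper's. You observe that the set $R$ of norm-$2$ vectors of $(\Z^m,A)$ is a reduced, simply-laced root system --- finite, inner products lie in $\{0,\pm 1\}$ off the diagonal since $A$ is integral and Cauchy--Schwarz is strict, and the reflections $s_{\vec v}$ are lattice isometries --- and then invoke the $A$-$D$-$E$ classification to bound $|R|$. The paper instead gives a self-contained, classification-free argument (Lemmas~\ref{count}--\ref{r2}): choose a maximal orthogonal set $T$ of norm-$\sqrt 2$ roots, use the Parseval inequality against $T$ to show every other root has at most four nonzero integer inner products with $T$ (each $\pm1$), and finish by a pigeonhole and case analysis, yielding separate bounds for roots inside and outside $\mathrm{span}(T)$; the final proof then splits $\Z^m$ as $V\oplus V^\perp$ with $V$ spanned by the norm-1 roots and treats each piece. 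Your approach buys brevity by importing the classification theorem; the paper's buys self-containedness at the cost of a longer combinatorial argument. One correction to your sketch: your intermediate assertion that the count is at most $2m(m-1)$ (treating $D_m$ as maximal) is false for roughly $8\le m\le 16$, since a single $E_8$ already gives $240 > 2\cdot 8\cdot 7=112$ and $E$-type factors beat $D$-type per unit rank in that range. The correct statement is only that the maximum root count over simply-laced root systems of total rank at most $m$ is bounded by something like $\max(30m,\,2m(m-1))$, which is still comfortably below $10m^2$ for every $m\ge 1$, so your final bound survives; but the $D$-type dominance you implicitly assumed does not hold uniformly.
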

We give a proof of this proposition at the end of this section.   We assume this proposition and Theroem~\ref{Siegelthmm}, and proceed to give a proof of Theorem~\ref{spheredense}.

\subsection{Proof of Theorem~\ref{spheredense}}  
\begin{proof}We assume that $A$ is an even unimodular lattice. The proof for the odd unimodular lattice is similar, and we briefly discuss it at the end.   Let $R(N)$ be the representation mass of even integer $N$ by the genus class of $A$ that is defined in~\eqref{averep}. By the Siegel mass formula~\eqref{Siegelmf} and  the explicit formulas for the local densities; see \cite[Lemma 2]{Akshay}, we have 
\begin{equation}\label{siegelrep}
R(N)=\dfrac{mN^{m/2-1}\pi^{m/2}}{\Gamma(m/2+1)} (1+O(2^{-m/4})).
\end{equation}
By the Stirling's formula $\Gamma(m/2+1)= \sqrt{\pi m} \big(\frac{m}{2e} \big)^{m/2} (1+O(1/m)).$ By choosing $N=\lfloor \frac{m}{2\pi e}+\frac{\log(m)}{2\pi e}-1 \rfloor$, we have 
\[
R(N)\leq \dfrac{2\pi e}{e^{\pi e}\sqrt{\pi}} (1+O(1/m)) \leq 1.
\]
Let  $q$ be an odd number such that  $ 2q\leq \frac{m}{2\pi e}+\frac{\log(m)}{2\pi e} -t-1,$ where $0 \leq t=o(m).$ Then
\begin{equation}\label{ratioex}
\frac{R(2q+2)}{R(2q)} = \big(1+\frac{4\pi e}{m}\big)^{m/2-1} (1+o(1)) = e^{2\pi e} (1+o(1)) \geq 5113^2. 
\end{equation} 
Hence,
\[
R(2q) \leq  5113^{-t}.
\]
Note that 
\[
  \mu_{0}\big(\vec{x}^{\intercal}A\vec{x}=2q \text{ for some }  \vec{x} \in \mathbb{Z}^m \big)\leq \dfrac{\sum_{A_i\in C(A)} \frac{1}{|O_{A_i}(\mathbb{Z})|} R_{A_i}(2q)}{\sum_{A_i\in C(A)} \frac{1}{|O_{A_i}(\mathbb{Z})|}}=R(2q) \leq  5113^{-t}.
\]
This implies the first part of Theorem~\ref{spheredense}.  Next, we give a proof of the second part of Theorem~\ref{spheredense}.  Assume that $q\geq \frac{m}{2\pi e}+2.6\frac{\log(m)}{\pi e} +t$, where $0 \leq t=o(m).$  
%
%$R(2q)= m^{h}$ for $1<h,$ where $h$ will be determined at the end of the proof. Note that by this choice $2q= \frac{m}{2\pi e}+O(\log(m))$ and we have,
%
%for large enough $m\in \mathbb{Z}.$
We use the trivial point-pair invariant function $K(\vec{x},\vec{y})=1$ in \eqref{varind}, and obtain   
\[
\va(2q)=\dfrac{ \sum_{A_i} \frac{1}{|O_{A_i}(\mathbb{Z})|}\Big(\big(R_{A_i}(2q)-R(2q)\big)^2   \Big)}{ \sum_{A_i} \frac{1}{|O_{A_i}(\mathbb{Z})|}}.
\]
Suppose  that $R_{A_i}(q)=0$ for  $\alpha $ proportion of $A_i$ with respect to the Siegel mass probability.   Then, 
\begin{equation} \label{lowerbd}
\va(2q) \geq \alpha R(2q)^2.
\end{equation}
One the other hand, by Theorem~\ref{Siegelthmm}, and the fact that $K(\vec{x},\vec{y})=1$, which  implies  $\pi_{\infty}=1$ and  $\deg(\pi_{\infty})=0$ in the expansion of \eqref{mainform},  we have 
 \[ 
 \va(2q)=\sum_{\pi}   |\Theta(\phi_{\pi,j},2q)|^2,
 \]
 where the sum is over $\pi$ with $\pi_{\infty}=1.$
By Proposition~\ref{ppww} and ~\ref{Heckeosc},  $\Theta(\phi_{\pi,j})$ is a holomorphic cusp form of weight $m/2$ and level dividing 8. Hence, we have the following multiplicative relation
\[
\Theta(\phi_{\pi,j},2q)= \Theta(\phi_{\pi,j},2) \lambda_{\Theta(\phi_{\pi,j})}(q)
\]
where $\lambda_{\Theta(\phi_{\pi,j})}(q)$ is the $q$-th Hecke eigenvalue of $\Theta(\phi_{\pi,j}).$ Since $m$ is even,  by the Ramanujan bound on the Hecke  eigenvalues of homomorphic cusp forms, we have
\[
|\lambda_{\Theta(\phi_{\pi,j})}(q)|^2\leq d(q)^2q^{m/2-1},
\]
where $d(q)$ is the number of divisors of $q.$
Therefore, we have
\begin{equation}\label{var2q}
\begin{split}
\va(2q) = \sum_{\pi}   |\Theta(\phi_{\pi,j},2q)|^2&= \sum_{\pi} |\Theta(\phi_{\pi,j},2)|^2 |\lambda_{\Theta(\phi_{\pi,j})}(q)|^2
\\
&\leq d(q)^2 q^{m/2-1} \sum_{\pi} |\Theta(\phi_{\pi,j},2)|^2
\\
&=d(q)^2 q^{m/2-1}  \va(2),
\end{split}
\end{equation}
where we used 
\[
 \va(2)=\sum_{\pi}   |\Theta(\phi_{\pi,j},2)|^2.
\]
By definition, we have 
\begin{equation*}
\begin{split}
 \va(2)&\leq \dfrac{ \sum_{A_i} \frac{1}{|O_{A_i}(\mathbb{Z})|}\Big(\big(R_{A_i}(2)-R(2)\big)^2   \Big)}{ \sum_{A_i} \frac{1}{O_{A_i}}}
 \\
 &\leq  \max_{i}{R_{A_i}(2)}\dfrac{ \sum_{A_i} \frac{1}{|O_{A_i}(\mathbb{Z})|}\big|R_{A_i}(2)-R(2)\big| }{ \sum_{A_i} \frac{1}{O_{A_i}}}
 \\
 &\leq 2 \max_{i}{R_{A_i}(2)} R(2).
\end{split}
\end{equation*}
By Proposition~\ref{rootprop}, 
\[
\max_{i}{R_{A_i}(2)} \leq 10 m^2.
\]
Hence,
\[
 \va(2) \leq 20m^2 R(2)  .
\]
By equation~\eqref{var2q}, we have 
\[
\va(2q)\leq 20m^2  R(2)q^{m/2-1}d(q)^2.
\]
By the Siegel Mass formula in ~\eqref{siegelrep}, we have 
\[
R(2q)=R(2)q^{m/2-1}(1+O(2^{-m/4})).
\]
Hence,
\[
\va(2q)\leq 20m^2 R(2q)d(q)^2.
\]
We compare this upper bound with  the lower bound~\eqref{lowerbd} and obtain 
\[
\alpha R(2q)^2 \leq 20 R(2q)m^2d(q)^2.
\]
By the above and \eqref{ratioex}, we have
\[
\alpha\leq \frac{20m^2d(q)^2}{R(2q)}\ll \frac{m^{2+\epsilon}}{m^{2.1}(5113)^{t}}\leq (5113)^{-t},
\]
where we used the asymptotic formula $2q\sim \frac{m}{2\pi e}.$ This completes the proof of our theorem for even unimodular lattices. The argument for odd unimodular lattices
is similar. The improved bound in the case of the odd unimodular is due to our upper bound $2m$ on the number of root vectors of length 1 in Proposition~\ref{rootprop}. 
\end{proof}
%
%By equation~\eqref{siegelrep}, we have 
%\[
%R(2)=\dfrac{m2^{m/2-1}\pi^{m/2}}{\Gamma(m/2+1)} (1+O(2^{-m/4})).
%\]
%
%\section{Local-global principle}
%In this section we give an application of the Siegel variance formula to the local-global principles for representation of quadratic forms.
%
%\begin{thm}
%Let $A_{m\times m}$ and $B_{n\times n}$ be a pair of positive definite quadratic forms such that $m\geq n+3.$ Assume that $B$ is locally representable by $A$ and $det(B)\gg 1$ and 
%$\gcd(det(A),det(B))=1$.  Then $B$ is representable by $A.$
%\end{thm}
%\begin{proof}
%We let $K_(\vec{X},\vec{Y})=1.$ By the Siegel variance formula, we have 
%\[
%\va(B):=\dfrac{\sum_{A_i} \frac{1}{|O_{A_i}(\mathbb{Z})|}\Big(\big(R_{A_i}(B)-R(B)\big)^2   \Big)}{\sum_{A_i\in C(A)} \frac{1}{|O_{A_i}(\mathbb{Z})|}}.
%\]
%On the other hand, by Theorem~\ref{}, we have 
%\[
%\va(B)=
%\]
%
%\end{proof}
%

\subsection{Proof of Proposition~\ref{rootprop}}
We begin by proving some auxiliary lemmas.

\begin{lem}\label{count} Assume that the root vectors of norm $1$ of $A$ spans $\mathbb{Z}^m$. Then $A$ is isomorphic to the identity matrix $I$. Moreover, then number of root vectors of length $1$ is $2m,$ and the number of the root vectors of norm $\sqrt{2}$ is $2m(m-1)$.
\end{lem}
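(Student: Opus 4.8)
The plan is to analyze the sublattice generated by the norm-$1$ root vectors and show it forces $A$ to be the standard lattice. First I would observe that if $\vec v \in \mathbb{Z}^m$ satisfies $\vec v^{\intercal} A \vec v = 1$, then for any other such $\vec w$ the Cauchy--Schwarz inequality in the inner product $\langle \vec x, \vec y\rangle_A := \vec x^{\intercal} A \vec y$ gives $|\langle \vec v,\vec w\rangle_A| \le 1$, so the pairing between two norm-$1$ vectors is $0$ or $\pm 1$; in the latter case $\vec v = \pm \vec w$ since $\vec v \mp \vec w$ has norm $0$. Hence the set of norm-$1$ root vectors consists of $\pm$ pairs that are mutually orthogonal. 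Under the hypothesis that they span $\mathbb{Z}^m$, we can pick $m$ of them, say $\vec e_1,\dots,\vec e_m$, that are linearly independent and pairwise $A$-orthogonal with $\langle \vec e_i,\vec e_i\rangle_A = 1$; these form an orthonormal basis of $\mathbb{R}^m$ with respect to $\langle\cdot,\cdot\rangle_A$.

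The next step is to upgrade "spanning $\mathbb{Z}^m$" to "being a $\mathbb{Z}$-basis of $\mathbb{Z}^m$." Since $\vec e_1,\dots,\vec e_m$ span $\mathbb{Z}^m$ over $\mathbb{Z}$ and there are exactly $m$ of them with $\mathbb{R}$-span all of $\mathbb{R}^m$, they must be a $\mathbb{Z}$-basis (a spanning set of size equal to the rank of a free $\mathbb{Z}$-module is a basis). In that basis the Gram matrix of $A$ is the identity, so $A$ is $\mathbb{Z}$-equivalent to $I$. For the counts: the norm-$1$ vectors of $(\mathbb{Z}^m, I)$ are exactly $\pm \vec e_1,\dots,\pm \vec e_m$, giving $2m$; and the norm-$2$ vectors are $\pm \vec e_i \pm \vec e_j$ for $i < j$, of which there are $\binom{m}{2}$ choices of unordered pair times $4$ sign combinations, i.e. $4\binom{m}{2} = 2m(m-1)$.

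I expect the only subtle point to be justifying that the pairwise-orthogonal $\pm$-system of norm-$1$ vectors can actually be thinned to a $\mathbb{Z}$-basis rather than merely an $\mathbb{R}$-basis scaled by an index: this is where one uses that the full set of such vectors is assumed to span $\mathbb{Z}^m$ over $\mathbb{Z}$, so any maximal $\mathbb{R}$-independent subset of them already generates a finite-index sublattice, and the orthonormality together with $\det A = 1$ (or simply the basis-vs-spanning-set argument) pins the index to $1$. Everything else is the elementary Cauchy--Schwarz bound on cross terms and bookkeeping of sign patterns.
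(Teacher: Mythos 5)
Your proof is correct and follows essentially the same route as the paper: Cauchy--Schwarz forces distinct norm-$1$ root vectors to be $A$-orthogonal, which makes the lattice standard, and the counts $2m$ and $4\binom{m}{2}=2m(m-1)$ follow by inspection of $(\mathbb{Z}^m,I)$. You are in fact slightly more careful than the paper at the step where a $\mathbb{Z}$-basis of root vectors is produced --- the paper simply writes ``let $B$ be a basis of root vectors,'' but a spanning set of a lattice need not contain a basis in general (e.g.\ $\{2,3\}$ spans $\mathbb{Z}$), so your observation that the norm-$1$ root vectors form an orthogonal $\pm$-system of exactly $m$ independent directions, hence a spanning set of cardinality $m$, hence a $\mathbb{Z}$-basis, is exactly the right way to close that small gap.
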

\begin{proof}
Let $B:=\{ \vec{v}_1,\dots, \vec{v}_m\}$ be a basis of root vectors such that $\vec{v}_i^{\intercal}A\vec{v}_i=1.$ By the Cauchy-Schwarz inequality, we have 
\[
|\vec{v}_i^{\intercal} A\vec{v}_j|< \big(|\vec{v}_i^{\intercal} A\vec{v}_i||\vec{v}_j^{\intercal} A\vec{v}_j|   \big)^{1/2}=1,
\]
for $i\neq j.$ Since  $\vec{v}_i$ and $A$ are integral for $1\leq i\leq m$, it follows that $\vec{v}_i^{\intercal} A\vec{v}_j=0.$ Hence, $B$ is an orthonormal basis, which implies  $A$ is isomorphic  to $I.$ It is easy to check that the root vectors of length 1 are  $\{ \pm \vec{v}_1,\dots, \pm \vec{v}_m\}.$ By a simple counting,  the number of root vectors of length $\sqrt{2}$ is $2m(m-1).$ This completes the proof of our lemma.   
\end{proof}
\begin{lem}\label{root2}
Assume that $A$ does not have any root vector of length 1, and there exists an orthogonal basis of root vectors of length $\sqrt{2}$ (for $\mathbb{R}^m$ not necessarily the lattice!)
Then the number of the root vectors of length $\sqrt{2}$ is less than $6m^2-4m$.
\end{lem}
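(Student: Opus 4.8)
The plan is to exploit an orthogonal basis $\{\mathbf{w}_1,\dots,\mathbf{w}_m\}$ of $\mathbb{R}^m$ consisting of root vectors of length $\sqrt2$ for $A$, and to bound how many lattice root vectors can lie in any single coordinate "shell" with respect to this basis. First I would normalize: after rescaling each $\mathbf{w}_i$ to a unit vector $\mathbf{e}_i$, the quadratic form $\mathbf{x}^\intercal A\mathbf{x}$ becomes $2\sum_i x_i^2$ in the coordinates $(x_1,\dots,x_m)$ given by this basis, so a root vector of length $\sqrt2$ is exactly a solution of $\sum_i x_i^2 = 1$ — but the coordinates $x_i$ need not be integers, only the original lattice vectors are constrained. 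The key observation is that $\mathbf{w}_i^\intercal A \mathbf{w}_i = 2$, so by Cauchy--Schwarz $|\mathbf{w}_i^\intercal A \mathbf{v}| \le \sqrt{2}\cdot\sqrt{2} = 2$ for any root vector $\mathbf v$ of length $\sqrt2$, and since $A$, $\mathbf w_i$, $\mathbf v$ are all integral, $\mathbf{w}_i^\intercal A \mathbf{v} \in \{-2,-1,0,1,2\}$, i.e. only five possibilities per basis vector.

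Next I would split into cases according to how many of the inner products $\mathbf{w}_i^\intercal A\mathbf v$ attain the extreme values $\pm 2$. In the normalized picture, writing $c_i := \mathbf{w}_i^\intercal A \mathbf{v}/2 = \mathbf{e}_i^\intercal(\mathbf v/\sqrt2)\in\{-1,-\tfrac12,0,\tfrac12,1\}$, these are the coordinates of the unit vector $\mathbf v/\sqrt 2$ in the orthonormal basis $\{\mathbf e_i\}$, so $\sum_i c_i^2 = 1$. If some $c_i = \pm1$ then $\mathbf v/\sqrt2 = \pm\mathbf e_i$, forcing $c_j = 0$ for all $j\ne i$; this accounts for at most $2m$ root vectors (the $\pm\mathbf w_i$). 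Otherwise every $c_i\in\{0,\pm\tfrac12\}$ and $\sum c_i^2 = 1$ forces exactly four of the $c_i$ to be $\pm\tfrac12$ and the rest $0$. Since a root vector $\mathbf v$ is determined by its vector of inner products $(\mathbf w_1^\intercal A\mathbf v,\dots,\mathbf w_m^\intercal A\mathbf v)$ (the $\mathbf w_i$ span $\mathbb R^m$), the number of such $\mathbf v$ is at most the number of ways to choose the $4$-element support and the $4$ signs: $\binom{m}{4}2^4$.

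The final step is arithmetic bookkeeping: I would add the two contributions, $2m + 16\binom{m}{4}$, and check this is $\le 6m^2 - 4m$ — which fails for large $m$, so I expect the actual argument needs a sharper count. The main obstacle, and where I'd focus the real work, is improving the crude $\binom{m}{4}2^4$ bound: not every sign/support pattern is realized, because two realized root vectors $\mathbf v, \mathbf v'$ must also satisfy $|\mathbf v^\intercal A \mathbf v'|\le 2$ with integrality, which in the $c$-coordinates says $|4\sum_i c_i c_i'|\le 2$, a strong incompatibility constraint between their supports. A cleaner route may be to observe that the root vectors of length $\sqrt2$, together with $0$, when passed through $\mathbf v\mapsto (c_1,\dots,c_m)$, all lie in the $D_m$-type root system scaled appropriately, and a root system in $\mathbb{R}^m$ of type $D_m$ (or any reducible sub-root-system thereof whose irreducible components are of classical type) has at most $2m(m-1)$ roots, with the mixed/degenerate cases contributing the extra lower-order terms that produce the stated $6m^2-4m$; so I would phrase the bound as: the integral root vectors of norm $\sqrt2$ form (a subset of) a root system of rank $\le m$, invoke the classification to bound $|\Phi|$ component by component, and optimize the resulting quadratic in the component ranks to land at $6m^2-4m$.
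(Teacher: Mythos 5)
Your setup matches the paper's exactly: pass to the coordinates $[\vec{u}]=(\vec{u}^{\intercal}A\vec{w}_1,\dots,\vec{u}^{\intercal}A\vec{w}_m)$, use $\sum_j|\vec{u}^{\intercal}A\vec{w}_j|^2=2\,\vec{u}^{\intercal}A\vec{u}=4$ to conclude that a root $\vec{u}\notin\pm\{\vec{w}_i\}$ has exactly four coordinates equal to $\pm1$ and the rest zero, and observe that integrality of $\vec{u}^{\intercal}A\vec{u}'$ forces the supports of any two such $[\vec{u}]$'s to overlap in an even number of coordinates. The gap is that you stop there: you note that the naive count $2m+16\binom{m}{4}$ fails, and neither of your two proposed fixes is actually carried out. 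The step you're missing (and which the paper supplies) is a double pigeonhole combined with a matching argument: pigeonhole on columns to find one column (WLOG the first) hit by $\geq 4R/m$ of the $R$ nontrivial roots; among those, the even-overlap parity with a fixed reference root $\vec{u}_1$ forces each to also hit column $2$, $3$, or $4$, so pigeonhole again to get $\geq 4R/(3m)$ roots supported on both columns $1$ and $2$; finally, the remaining $2$-element supports of those roots inside $\{3,\dots,m\}$ must be pairwise disjoint or identical (even overlap, given they already share $\{1,2\}$), hence form a partial matching, giving at most $\tfrac{m-2}{2}\cdot 2^4$ possibilities. This yields $4R/(3m)\leq 8(m-2)$, so $R\leq 6m(m-2)$, and adding back the $2m$ trivial roots gives the bound.

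On your root-system fallback: the specific identification with $D_m$ is wrong. In the coordinates $[\vec{u}]$ a nontrivial root has \emph{four} entries $\pm 1$, not two, so these are not $D_m$ roots, and in any case $\vec{u}\mapsto[\vec{u}]$ is not an isometry. But the abstract version of your idea is correct and actually gives a \emph{sharper} bound than the paper's, so it is worth noting as a genuinely different route. The set $\Phi$ of all lattice vectors with $\alpha^{\intercal}A\alpha=2$ is a reduced crystallographic root system for the inner product $A$: the reflection $s_\alpha(\beta)=\beta-(\alpha^{\intercal}A\beta)\alpha$ maps $\mathbb{Z}^m$ to itself and preserves the norm, $(\beta^{\intercal}A\alpha)\in\mathbb{Z}$ gives the integrality axiom, and only $\pm\alpha$ among scalar multiples have norm $2$. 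All roots have the same length, so $\Phi$ is simply-laced, and since by hypothesis it contains a basis of $\mathbb{R}^m$ it has rank exactly $m$. By the $ADE$ classification, an irreducible simply-laced root system dominates any direct sum of lower-rank pieces, so $|\Phi|\leq 2m(m-1)$ for $m\geq 9$ (and $\leq 72,126,240$ for $m=6,7,8$, $\leq 2m(m-1)$ for $3\leq m\leq5$, $\leq 2, 6$ for $m=1,2$) --- there is no quadratic in the component ranks to optimize, and the result is strictly less than $6m^2-4m$ in every case. To make this a proof you would need to state and verify the root-system axioms explicitly and quote the classification, neither of which your sketch does.
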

\begin{proof}
Let $B:=\{ \vec{v}_1,\dots, \vec{v}_m\}$ be an orthogonal  basis of root vectors of length $\sqrt{2}.$ Let $\vec{u}\notin \pm B$ be a root vector of length $\sqrt{2}.$ Let $[\vec{u}]:=(\vec{u}^{\intercal}A\vec{v}_j)\in \mathbb{Z}^m$, where $\vec{v}_j\in B.$ Let $\#\vec{u}$ denote the number of non-zero coordinates of $[\vec{u}].$  By the plancherel identity, we have 
\[
2= \sum_{\vec{v}_j\in B} \frac{|\vec{u}^{\intercal}A\vec{v}_j|^2}{2}.
\]
Since  $\vec{u}\notin \pm B$, it follows that $\#\vec{u}=4$ and $\vec{u}^{\intercal}A\vec{v}_j=\pm 1$ or 0. Let $S:=\{\vec{u}_1, \dots,\vec{u}_{R}\}$ denote the set of all root vector $\vec{u}$ of length $\sqrt{2}$, where $\vec{u}\notin \pm B.$ Let $M_{R\times m}:=[\vec{u}_i^{\intercal}A\vec{v}_j]$, for $\vec{u}_i\in S$ and $\vec{v}_j\in B.$ In what follows, we given an upper bound on $R.$ Each row $[\vec{u_i}]$ contains exactly four   $\pm 1$ and zero at other entries.  So, the matrix $M$ contains $4R$ nonzero elements. By a pigeon-hole argument there exits a column, associated to $\vec{v}_j$ for some $j$, which contains at least $\frac{4R}{m}$ non-zero elements which are $\pm1$. Without loss of generality, suppose that 
$\vec{u}_i^{\intercal}A\vec{v}_1=\pm 1$ for $1\leq i\leq \frac{4R}{m}.$ By the plancherel identity,
\[
\vec{u}_i^{\intercal}A\vec{u}_j=1/2+ \sum_{k>1} \frac{(\vec{u}_i^{\intercal}A\vec{v}_k)(\vec{u}_j^{\intercal}A\vec{v}_k)}{2}\in \mathbb{Z},
\]
where $1\leq i,j\leq \frac{4R}{m}.$ The integrality of the inner product implies $\vec{u}_i$ and $\vec{u}_j$ are non-zero at either 1 or 3 other columns. Without loss of generality assume that $\vec{u}_1^{\intercal}A\vec{v}_j\neq 0$ for $1 \leq j\leq 4.$ Then for $1\leq i\leq  \frac{4R}{m}$  $\vec{u}_i^{\intercal}A\vec{v}_j\neq 0$ for some $2\leq  j\leq 4.$ By a pigeon-hole argument for some $2\leq j\leq 4$ there are more than $\frac{4R}{3m}$, $\vec{u}_i$ such that $\vec{u}_i^{\intercal}A\vec{v}_j\neq 0.$ Without loss of generality assume that  $1\leq i\leq \frac{4R}{3m},$ we have $\vec{u}_i^{\intercal}A\vec{v}_2\neq 0.$  Finally by the integrality of $\vec{u}_i^{\intercal}A\vec{u}_j$, it follows that
\[
\frac{4R}{3m}\geq 8(m-2).
\]
This implies \(R\leq 6 m(m-2)   \). So the total number of root vectors of length $\sqrt{2}$ is less than $6m^2-4m.$ This completes the proof of our lemma.

%
%By the Cauchy-Schwarz inequality, we have 
%\[
%|\vec{v}_i^{\intercal} A\vec{v}_j|< \big(|\vec{v}_i^{\intercal} A\vec{v}_i||\vec{v}_j^{\intercal} A\vec{v}_j|   \big)^{1/2}=1,
%\]
%for $i\neq j.$
\end{proof}

\begin{lem}\label{r2}
Assume that $A$ does not have any root vector of length 1. Then the number of the root vectors of length $\sqrt{2}$ is less than $10m^2-4m$.
\end{lem}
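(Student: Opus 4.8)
The plan is to run a counting argument against a maximal orthogonal family of root vectors of length $\sqrt 2$, using Lemma~\ref{root2} for the part of the count supported on that family. If $A$ has no root vector of length $\sqrt 2$ we are done, so fix a maximal set $B=\{\vec{v}_1,\dots,\vec{v}_k\}\subset\mathbb{Z}^m$ of pairwise $A$-orthogonal root vectors of length $\sqrt 2$, let $W$ be the $\mathbb{R}$-span of $B$ (so $\dim W=k$), and set $L^{\perp}:=\mathbb{Z}^m\cap W^{\perp}$, a lattice of rank $m-k$. By maximality of $B$, the subspace $W^{\perp}$ contains no root vector of length $\sqrt 2$, and by hypothesis none of length $1$; hence every nonzero $\vec{w}\in L^{\perp}$ has $\vec{w}^{\intercal}A\vec{w}\geq 3$. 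For a root vector $\vec{u}\neq\pm\vec{v}_j$ of length $\sqrt 2$, Cauchy--Schwarz forces $\vec{u}^{\intercal}A\vec{v}_j\in\{-1,0,1\}$ and Bessel's inequality for the orthonormal system $\{\vec{v}_j/\sqrt 2\}$ gives $\sum_j(\vec{u}^{\intercal}A\vec{v}_j)^2\leq 4$; writing $j(\vec{u})$ for the number of indices with $\vec{u}^{\intercal}A\vec{v}_j\neq 0$, we have $j(\vec{u})\in\{1,2,3,4\}$, since $j(\vec{u})=0$ would put $\vec{u}\in W^{\perp}$.

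First I would dispose of two of these cases. If $j(\vec{u})=4$ then equality holds in Bessel's inequality, so $\vec{u}\in W$; thus the root vectors of length $\sqrt 2$ lying in $W$ are precisely those of the rank-$k$ sublattice $(\mathbb{Z}^m\cap W,A)$, which carries the orthogonal basis $B$ of such vectors, so by Lemma~\ref{root2} there are fewer than $6k^2-4k$ of them (the case $k\leq 1$, where there are at most two such vectors in $W$, being handled directly). If $j(\vec{u})=3$, write $\vec{u}^{\intercal}A\vec{v}_{a}=\epsilon_a\in\{\pm1\}$ at the three nonzero indices; then $\vec{z}:=2\vec{u}-\sum_a\epsilon_a\vec{v}_a$ lies in $\mathbb{Z}^m$, pairs to $0$ against every $\vec{v}_j$, hence lies in $L^{\perp}$, and $\vec{z}^{\intercal}A\vec{z}=8-12+6=2$; this is a root vector of length $\sqrt 2$ in $W^{\perp}$, contradicting maximality of $B$. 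So $j(\vec{u})=3$ never occurs.

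It remains to bound the root vectors with $j(\vec{u})\in\{1,2\}$, and here the mechanism is that $L^{\perp}$ has minimum at least $3$. If $j(\vec{u})=2$, say $\vec{u}^{\intercal}A\vec{v}_i=\epsilon_i$, $\vec{u}^{\intercal}A\vec{v}_{i'}=\epsilon_{i'}$, then $\vec{z}:=2\vec{u}-\epsilon_i\vec{v}_i-\epsilon_{i'}\vec{v}_{i'}\in L^{\perp}$ with $\vec{z}^{\intercal}A\vec{z}=4$; if $\vec{u},\vec{u}'$ are two such root vectors with the same index pair and the same signs, a short computation puts $\vec{z}\pm\vec{z}'$ in $2L^{\perp}$, so either vanishes or has $A$-norm $\geq 12$, while $(\vec{z}+\vec{z}')^{\intercal}A(\vec{z}+\vec{z}')+(\vec{z}-\vec{z}')^{\intercal}A(\vec{z}-\vec{z}')=2\vec z^\intercal A\vec z+2\vec z'^\intercal A\vec z'=16$, forcing $\vec{z}'=\pm\vec{z}$; hence at most two such $\vec{u}$ per index pair and per sign pattern, i.e. at most $8\binom{k}{2}=4k^2-4k$ root vectors with $j(\vec{u})=2$. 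If $j(\vec{u})=1$, say $\vec{u}^{\intercal}A\vec{v}_i=\epsilon$, then $\vec{z}:=2\vec{u}-\epsilon\vec{v}_i\in L^{\perp}$ with $\vec{z}^{\intercal}A\vec{z}=6$; for fixed $i$ and $\epsilon$ the same argument (the two $A$-norms now summing to $24$, each $0$ or $\geq 12$) shows that two such $\vec{z}$ are equal, negatives, or $A$-orthogonal, and since $\vec{u}\mapsto\epsilon\vec{v}_i-\vec{u}$ is an involution on this class of root vectors sending $\vec{z}\mapsto-\vec{z}$, the corresponding $\vec{z}$ form a symmetric $A$-orthogonal family in $W^{\perp}$, so there are at most $2(m-k)$ of them; summing over the two signs and the $k$ indices gives at most $4k(m-k)$ root vectors with $j(\vec{u})=1$.

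Adding the contributions from $j(\vec{u})=1,2,3,4$, the number of root vectors of length $\sqrt 2$ is at most
\[
(6k^2-4k)+(4k^2-4k)+4k(m-k)+0=6k^2+4km-8k,
\]
an expression nondecreasing in $k$ on $[0,m]$, hence at most $10m^2-8m<10m^2-4m$ (with the case $m=1$ trivial), proving the lemma. I expect the main obstacle to be the bookkeeping in the $j(\vec{u})\in\{1,2\}$ cases: one must fix the correct equivalence classes of root vectors (common supporting indices and common signs), check that the natural involutions $\vec{u}\mapsto(\text{sum of the relevant }\pm\vec{v}\text{'s})-\vec{u}$ preserve those classes, and track the factors of $2$ coming from signs and from the symmetry of the auxiliary family in $L^{\perp}$; by contrast the impossibility of $j(\vec{u})=3$ and the reduction of the $j(\vec{u})=4$ part to Lemma~\ref{root2} are routine once $B$ is fixed. (One could alternatively observe that the root vectors of length $\sqrt 2$ form a simply-laced root system of rank $\leq m$ and invoke the classification, but the above avoids it.)
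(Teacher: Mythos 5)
Your proof is correct, and it shares the paper's skeleton — fix a maximal $A$-orthogonal family $B$ of length-$\sqrt2$ root vectors, split the remaining root vectors by the support size $j(\vec u)$, invoke Lemma~\ref{root2} for the vectors lying in $\mathrm{span}(B)$, and kill $j(\vec u)=3$ via the integral reflection $2\vec u - \sum\epsilon_a\vec v_a$ — but your treatment of $j(\vec u)\in\{1,2\}$ is genuinely different from the paper's. The paper bounds the multiplicity $m(\vec f)\le 2$ of each "pattern vector" $\vec f=(\vec u^{\intercal}A\vec w_i)$ by a direct case analysis (showing three root vectors with the same pattern would be mutually orthogonal, or would produce a length-$\sqrt2$ vector orthogonal to $T$, each contradicting maximality), then multiplies by the number of admissible patterns. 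You instead push everything through the single observation that the sublattice $L^\perp=\mathbb Z^m\cap W^\perp$ has $A$-minimum $\ge 3$, so $2L^\perp$ has minimum $\ge 12$: the auxiliary vectors $\vec z=2\vec u-\sum\epsilon\vec v\in L^\perp$ have $A$-norms $4$ (resp.\ $6$), and the parallelogram law forces any two with the same pattern to be equal, opposite, or (in the $j=1$ case) orthogonal. This is a cleaner unified mechanism and gives the same bound $4k^2-4k$ for $j=2$, but a weaker bound $4k(m-k)$ for $j=1$ than the paper's $4q$; the totals still come in under $10m^2-4m$ (you even get $10m^2-8m$), because the shortfall only matters for small $k$, where the other terms are tiny. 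You are also right to treat $k\le 1$ separately, since the strict inequality in Lemma~\ref{root2} fails at $m=1$; the paper's proof of Proposition~\ref{rootprop} applies Lemma~\ref{root2} without flagging that degenerate case.
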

\begin{proof}
%Let $T^{\prime}:=\{\vec{w}_1^{\prime},\dots,\vec{w}_{q^{\prime}}^{\prime} \}$ be an independent set of vectors such that $B:=T \cup T^{\prime}$ forms a basis for the span of the root vectors of $V^{\perp}$. Hence, every root vector $\vec{v}$ of $V^{\perp}$ is uniquely determined by 
%\([\vec{v}]:= [ \vec{v}^{\intercal} A \vec{u}]\in \mathbb{Z}^{p+q+q^{\prime}} \), where $\vec{u}\in B.$
% We give an upper bound on the number of different possibilities for $[\vec{v}]$. 
Let $T=\{\vec{w}_1,\dots,\vec{w}_q\}$ be a maximal set of orthogonal root vectors of norm $\sqrt{2}.$ By Lemma~\ref{root2}, the number of the root vectors which are in the span of $T$ is less than $6q^2-4q$. We proceed and show that the number of root vectors  which are not in the span of $T$ is less than $4q^2.$

 Suppose that  $\vec{u}$ is a root vector, and $\vec{u}\notin \text{span}(T).$ Let $[\vec{u}]:=(\vec{u}^{\intercal}A\vec{w}_i)$, where $\vec{w}_i\in T.$ Let $\#\vec{u}$ denote the number of non-zero coordinates of $[\vec{u}].$  We show that $\#\vec{u}= 1 , 2.$ By the plancherel inequality, we have 
 \[
 2=\vec{u}^{\intercal}A\vec{u} \geq \sum_{\vec{w}_i\in T} \frac{|\vec{u}^{\intercal}A\vec{w}_i|^2}{2}.
 \]
 This shows that $\#\vec{u}\leq 4$. 
 The maximality assumption of $T$ excludes  $\#\vec{u}=0$, and $\vec{u}\notin \text{span}(T)$ excludes  $\#\vec{u}=4$.  Suppose that $\#\vec{u}=3$ and   $\vec{w}_1$, $\vec{w}_2$ and $\vec{w}_3$ have  non-zero  inner product with $\vec{u}$. Then we define  
 \[
 \vec{u}^{\prime}:=2\vec{u}-((\vec{u}^{\intercal}A\vec{w}_1) \vec{w}_1+(\vec{u}^{\intercal}A\vec{w}_2) \vec{w}_2+(\vec{u}^{\intercal}A\vec{w}_3) \vec{w}_3)\in \mathbb{Z}^m. 
 \]
We have \( {\vec{u}^{\prime}}^{\intercal}A \vec{u}^{\prime}=2 \). Note  that $ \vec{u}^{\prime}$ is a root vector and is orthogonal to all vectors in $T,$ which contradicts with the maximality of $T.$ So, the only possibilities for $\#\vec{u}$ are 1 or 2. 

 Let $m(\vec{f})$ be the number of roots vectors $\vec{v}\notin \text{span}(T)$ such that $[\vec{v}]=\vec{f}.$  We claim that $m(\vec{f})\leq 2$. First, suppose  that $\#\vec{f}=2,$ and without loss of generality $\vec{f}=(1,1,0,\dots,0).$ Assume the contrary that $m(\vec{f})>2.$ Then for some root vectors $\vec{u}_1$, $\vec{u}_2$ and $\vec{u}_3$  outside $\text{span}(T)$, we have   $[\vec{u_1}]=[\vec{u_2}]=[\vec{u_3}]=\vec{f}$.  We show that $\vec{u}_i$ are orthogonal to each other.  Assume the contrary that  $\vec{u}_1^{\intercal} A\vec{u}_2\neq 0.$ Then,    $\vec{u}_1^{\intercal} A\vec{u}_2= \pm 1.$ Assume that  $\vec{u}_1^{\intercal} A\vec{u}_2= - 1.$ Then $\vec{u}_1 +\vec{u}_2$ is a root vector. By plancherel inequality, we have 
\[
2=(\vec{u}_1 +\vec{u}_2)^{\intercal} A(\vec{u}_1 +\vec{u}_2)\geq \sum_{\vec{w}_i\in T} \frac{|(\vec{u}_1 +\vec{u}_2)^{\intercal}A\vec{w}_i|^2}{2}=2\#\vec{u} \geq 4,
\]
which is a contradiction. So $\vec{u}_1^{\intercal} A\vec{u}_2= 1$, and $\vec{u}_1-\vec{u}_2$ is a root vector which is orthogonal to $T.$ This contradicts with the maximality of $T$. Hence $\vec{u}_1$, $\vec{u}_2$ and $\vec{u}_3$ are orthogonal root vectors.  Substitute  $\vec{w}_1$ and $\vec{w}_2$ from $T$ with $\vec{u}_1$, $\vec{u}_2$ and $\vec{u}_3.$ Then the new set is an orthogonal set of root vectors which has 1 more element than $T.$
 This contradicts with the maximality of $T.$ This shows that $m(\vec{f})\leq 2$ when $\#\vec{f}=2.$ 

Next, suppose that $\#\vec{f}=1$ and without loss of generality  $\vec{f}=(1, 0, \dots,0).$ Similarly, assume the contrary that $\vec{u}_1$, $\vec{u}_2$ and $\vec{u}_3$ are root vectors outside $\text{span}(T)$ with  $[\vec{u_1}]=[\vec{u_2}]=[\vec{u_3}]=\vec{f}.$  We claim that  $\vec{u}_i^{\intercal}A\vec{u}_j\neq 0$ for every $1\leq i, j\leq 3.$ Assume the contrary that $\vec{u}_1$ and $\vec{u}_2$ are orthogonal to each other then substitute  $\vec{u}_1$ and $\vec{u}_2$  with $\vec{w}_1$. This again  contradicts with the maximality of $T.$ Next, we show that  $\vec{u}_1^{\intercal}A\vec{u}_2\neq 1.$ Otherwise, $\vec{u}_1-\vec{u}_2$ is a root vector which is orthogonal to all vectors in $T$ and this also contradicts with the maximality of $T.$ Hence $\vec{u}_i^{\intercal}A\vec{u}_j= -1$ for every $i\neq j.$ Then $\vec{u}_1 +\vec{u}_2$ is a root vector. By plancherel inequality, we have 
\[
2=(\vec{u}_1 +\vec{u}_2)^{\intercal} A(\vec{u}_1 +\vec{u}_2)\geq \frac{|(\vec{u}_1 +\vec{u}_2)^{\intercal}A\vec{w}_1|^2}{2}=2,
\]
which implies $\vec{u}_1 +\vec{u}_2=\vec{w}_1.$ Similarly, we have $\vec{u}_1 +\vec{u}_3=\vec{w}_1,$ and $\vec{u}_2 +\vec{u}_3=\vec{w}_1.$ This implies
$\vec{u}_1=\vec{u}_1=\vec{u}_1= \vec{w}_1/2$ which is a contradiction. Therefore $m(\vec{f})\leq 2$ for every $f\in \mathbb{Z}^q.$

Note that there are at most $2q$ vectors $\vec{f}\in \{0,\pm 1\}^q$ with $\#\vec{f}=1$. Since $m(\vec{f})\leq 2$, there are at most $4q$ root vectors $\vec{u}\notin \text{span } T$ such that $\#[\vec{u}]=1.$ Similarly, there are  at most $2q(q-1)$ vectors $\vec{f}\in\{0,\pm 1\}^q$ with $\#\vec{f}=2,$ and that implies  there are at most $4q(q-1)$ root vectors $\vec{u}\notin \text{span } T$ such that $\#[\vec{u}]=2.$ Therefore, the total number of root vectors $\vec{u}$, where $\vec{u}\notin \text{span}(T),$ is less than $4q^2.$ This completes the proof of our lemma.
\end{proof}
Finally, we give a proof of Proposition~\ref{rootprop}.
\begin{proof} [Proof of Proposition~\ref{rootprop}]
Let $S:=\{\pm \vec{v}_1,\dots,\pm \vec{v}_p\}$ be the set  integral vectors such that $\vec{x}^{\intercal}A\vec{x}=1.$ By Lemma~\ref{count}, it follows that $\{ \vec{v}_1,\dots, \vec{v}_p\}$ is an orthonormal set of vectors. Hence, the number of root vectors of length 1 is less than $2m.$ 
 Let $V:=\text{span}_{\mathbb{Z}} \{ \vec{v}_1,\dots, \vec{v}_p\}\subset \mathbb{Z}^m$ be the lattice generated with the root vectors of length 1. It is easy to see that $\mathbb{Z}^n=V\oplus V^{\perp},$ where $V^{\perp}\subset \mathbb{Z}^m$ is the   orthogonal complement of $V \subset \mathbb{Z}^m$ with respect to $A.$ By our assumption, all the root vectors of $V^{\perp}$ has length $\sqrt{2}$ (there is no root vector of length 1 in $V^{\perp}$). Moreover, if $u$ is any root vector with length $\sqrt{2}$ then either $u\in V$ or $u\in V^{\perp}.$ By Lemma~\ref{count},  the number of root vectors of length $\sqrt{2}$ in $V$ is less than $2p(p-1).$ By Lemma~\ref{r2}, the number of root vectors of length $\sqrt{2}$ is less than $10(m-p)^2.$ This completes the proof of our Proposition.

\end{proof}

\section{Proof of Theorem~\ref{mthm}}\label{sec3}

 Recall the notations while  formulating  Theorem~\ref{mthm} and Theorem~\ref{Siegelthmm}.  In this section, we assume that $F(x_1,\dots,x_m)=\vec{x}^{\intercal} A\vec{x}$ for some positive definite  symmetric matrix $A$,   where $\vec{x}=\begin{bmatrix}x_1\\ \vdots \\ x_m \end{bmatrix}.$ 
% Moreover, we assume that  $n=1$ and let $B=N$ for some positive $N\in\mathbb{Z}.$ 
 We give a sharp upper bound on $ \va(N,\eta)$ by assuming Theorem~\ref{Siegelthmm}.  
\subsection{Scaling the  point-pair invariant function}
We prove a simple lemma which relates the point-pair invariant functions $K_{\eta}(\vec{x},\vec{y})$ (defined in   Theorem~\ref{mthm}) to   $K_{r,N}(\vec{x},\vec{y})$ (defined in \eqref{pp}). Recall that
\(K_{\eta}(\vec{x},\vec{y}):=C_{\eta}k\big(\frac{\sqrt{F(\vec{x}-\vec{y})}}{\eta}\big),\) where $\vec{x},\vec{y}\in V_1(\mathbb{R}),$ $\eta\in \mathbb{R}$ and $C_{\eta}$ is a normalization factor such that
\(
\int_{ V_1(\mathbb{R})} K_{\eta}(\vec{x},\vec{y}) d \mu(\vec{y})=1.
\)
Moreover,  for $\vec{x}, \vec{y}\in V_{N,A_i}(\mathbb{R})$, where  $A_i\in C(A)$, we defined 
\(
K_{r,N}(\vec{x},\vec{y}):= C_{N,r}k\big(\frac{|\vec{x}-\vec{y}|_i}{r}\big),
\)
where $|\vec{x}-\vec{y}|_i:=\sqrt{(\vec{x}-\vec{y})^{\intercal}A_i(\vec{x}-\vec{y})}$, and 
\[
\int_{V_{A_i,N}(\mathbb{R})}K_{r,N}(\vec{x},\vec{y}) d\mu_{i,N}(\vec{y})=1,
\]
where $\mu_{i,N}$ is the Haar  probability measure on $V_{A_i,N}.$
\begin{lem} Assume that $\vec{x},\vec{y} \in V_{A,N}(\mathbb{R}).$ 
We have 
\[
K_{r,N}(\vec{x},\vec{y})=K_{\eta}( \frac{\vec{x}}{\sqrt{N}},\frac{\vec{y}}{\sqrt{N}}),
\]
where $\eta=\frac{r}{\sqrt{N}}.$
\end{lem}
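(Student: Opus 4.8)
The plan is to unwind both normalizations and compare the bump-function arguments directly. First I would observe that the map $\vec{x} \mapsto \vec{x}/\sqrt{N}$ sends $V_{A,N}(\mathbb{R})$ bijectively onto $V_{A,1}(\mathbb{R}) = V_1(\mathbb{R})$, and that it is equivariant for the $O_A(\mathbb{R})$-action, hence pushes the Haar probability measure $\mu_{A,N}$ forward to the Haar probability measure $\mu$ on $V_1(\mathbb{R})$. This identification is what makes the statement meaningful: the right-hand side $K_\eta(\vec{x}/\sqrt{N}, \vec{y}/\sqrt{N})$ is evaluated at genuine points of $V_1(\mathbb{R})$.

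Next I would compare the arguments of $k$. For $\vec{x},\vec{y}\in V_{A,N}(\mathbb{R})$ we have $F(\vec{x}-\vec{y}) = (\vec{x}-\vec{y})^{\intercal} A (\vec{x}-\vec{y}) = |\vec{x}-\vec{y}|^2$ (writing $|\cdot|$ for the norm attached to $A = A_i$ with $i$ the index of $A$ itself), and by homogeneity $\sqrt{F\big((\vec{x}-\vec{y})/\sqrt{N}\big)} = |\vec{x}-\vec{y}|/\sqrt{N}$. Therefore
\[
k\!\left(\frac{\sqrt{F\big((\vec{x}-\vec{y})/\sqrt{N}\big)}}{\eta}\right) = k\!\left(\frac{|\vec{x}-\vec{y}|/\sqrt{N}}{\eta}\right) = k\!\left(\frac{|\vec{x}-\vec{y}|}{\sqrt{N}\,\eta}\right) = k\!\left(\frac{|\vec{x}-\vec{y}|}{r}\right)
\]
by the choice $r = \sqrt{N}\,\eta$. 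So $K_{r,N}(\vec{x},\vec{y})$ and $K_\eta(\vec{x}/\sqrt{N},\vec{y}/\sqrt{N})$ are equal up to the two normalizing constants $C_{N,r}$ and $C_\eta$; it remains to check these constants agree.

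Finally I would verify $C_{N,r} = C_\eta$ by integrating. Using the measure-pushforward from the first step and the pointwise identity from the second step,
\[
1 = \int_{V_{A,N}(\mathbb{R})} C_{N,r}\, k\!\left(\frac{|\vec{x}-\vec{y}|}{r}\right) d\mu_{A,N}(\vec{y}) = C_{N,r}\int_{V_1(\mathbb{R})} k\!\left(\frac{\sqrt{F(\vec{x}/\sqrt{N}-\vec{z})}}{\eta}\right) d\mu(\vec{z}),
\]
while the defining relation for $C_\eta$ gives $1 = C_\eta \int_{V_1(\mathbb{R})} k\big(\sqrt{F(\vec{x}/\sqrt{N}-\vec{z})}/\eta\big)\,d\mu(\vec{z})$; since $K_\eta$ is a point-pair invariant and $O_A(\mathbb{R})$ acts transitively on $V_1(\mathbb{R})$, the integral is independent of the base point, so the two displays force $C_{N,r} = C_\eta$. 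Combining with the pointwise identity yields the claim. There is no real obstacle here — the only point requiring a line of care is confirming that the pushforward of $\mu_{A,N}$ under scaling is exactly $\mu$ (both being the unique $O_A(\mathbb{R})$-invariant probability measures on their respective spaces, this is immediate from uniqueness), after which everything is bookkeeping of homogeneity degrees.
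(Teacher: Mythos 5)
Your proposal is correct and follows the same route as the paper: identify $V_{A,N}(\mathbb{R})$ with $V_1(\mathbb{R})$ via $\vec{x}\mapsto\vec{x}/\sqrt{N}$, note the probability measures correspond under this scaling, use homogeneity of $F$ to match the arguments of $k$, and conclude the normalizing constants coincide. The paper's proof is terser (it compresses the constant check into ``the lemma follows''), but the ingredients and their order are the same; you have simply written out the bookkeeping explicitly.
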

\begin{proof} Note that the probability measure $\mu_{N}$ on $V_{A, N}(\mathbb{R})$ is the pull back of the probability measure  $\mu$ on $V_{A,1}(\mathbb{R})$ by the scaling map with $1/\sqrt{N}$. Hence,
we have 
\[
K_{r,N}( \vec{x}, \vec{y})=C_{N,r}k\big(\frac{|\vec{x}-\vec{y}|}{r}\big),
\]
where $|\vec{x}-\vec{y}|=\sqrt{(\vec{x}-\vec{y})^{\intercal}A(\vec{x}-\vec{y})}=\sqrt{F(\vec{x}-\vec{y})}.$ The lemma follows from the above identity and the definition of $K_{\eta}(\vec{x},\vec{y})$.  
%By the above identity and the definition of $C_{r, N}$, we have  
%\[
%\frac{1}{C_{\lambda r,\lambda^2 N}}= \int_{V_{A_i, N}(\mathbb{R})}k\big(\frac{|\vec{x}-\vec{y}|_i}{r}\big) d\mu_{i,N}(\vec{y})=\frac{1}{C_{ r, N}}.
%\]
%We may consider scale $K_{r,B}(\vec{x},\vec{y})$ and consider it as a point-pair invariant function on $V_{A_i,1}(\mathbb{R}),$ namely for $\vec{u},\vec{v}\in V_{A_i,1}(\mathbb{R}).$ 
\end{proof}

%
%
%In the next section, we show that  $W(\phi_{\pi,j},N)$ is the $N$-th Hecke eigenvalue of the theta transfer of $\pi$, which is a classical holomorphic modular form of weight $|\pi_{\infty}+d/2|.$ We establish this by constructing Siegel's theta kernel form the oscillator representation of the adelic symplectic group following Shintani's work \cite{Shintani}.
%
%It is well known that the space of harmonic polynomials with respect to $F$ gives a spectral decomposition of $V_1.$ We write
%$$
%K_{r,B}(\vec{X},\vec{Y}):=\sum_{n=0}^{\infty} \sum_{i=1}^{d_n} h_{r}(n) \phi_{n,i}(x)\bar{\phi}_{n,i}(y),
%$$
%where $\{\phi_{n,i}\}_{i=1}^{d_n}$ is an orthonormal basis for the subspace of harmonic polynomials of degree $n.$ We have 
%\[
%\va=\int_{V_1}\Big(\sum_{n=1}^{\infty} h_{r}(n) \phi_{n,i}(x) W(\phi_{n,i},N)  \Big)^2 d\mu_i(x),
%\]
%where $W(\phi_{n,i},N)$ is the Weyl sum defined as: 
%\[
%W(\phi_{n,i},N):= \sum_{y\in \mathcal{S}_B} \phi_{n,i}(y).
%%\]
%\begin{prop} Assume that  either (1) $m$ is even, (2) $N=el^2$ for some fixed $e$, (3)   Lindel\"of hypothesis for holomorphic modular forms.
%We have 
%\[
%\va(N,r)\leq N^{\frac{m}{2}-1+\epsilon}\Big(\frac{r}{\sqrt{N}}\Big)^{m-1}
%\]
%\end{prop}
\subsection{Proof of Theorem~\ref{mthm}}
\begin{proof} Let $\{\phi_{k,i}   \}$ be an orthonormal  basis of automorphic forms which are harmonic polynomials of degree $k$ in  $L^2\big(O_A(\mathbb{Q})\backslash O_A(\mathbb{A}_{\mathbb{Q}})/O(\prod_p\mathbb{Z}_p)\big).$  We consider them as harmonic polynomials on the  disjoint union of the following quadrics 
\[\bigcup\big\{(\vec{X},A_i): \vec{X}\in O_{A_i}(\mathbb{Z})\backslash V_{A_i,1}(\mathbb{R}),  A_i \in C(A) \big\}.
\]
By Theorem~\ref{Siegelthmm},
we have
\[
\va(N,r)=\sum_{k=1}^{\infty} h_{r}(k)^2 N^{-k}\Theta(\phi_{k,i},N)^2,
\]
where $\Theta(\phi_{k,i},N)$ is the $N$-Fourier coefficient of $\Theta(\phi_{k,i})$ that is  the theta transfer of $\phi_{k,i}$. By Proposition~\ref{ppww} and ~\ref{Heckeosc},  $\Theta(\phi_{k,i})$ is a Hecke holomorphic modular form of weight $m/2+k$ and level dividing $4|A|$.  Recall that $\gcd(N,|A|)=1$. Hence,  by the multiplicative property of the Fourier coefficients, we have 
\[
\Theta(\phi_{k,i},N)=\lambda_{\Theta(\phi_{k,i})}(N)\Theta(\phi_{k,i},1),
\]
where $\lambda_{\Theta(\phi_{k,i})}(N)$ is the $N$-th Hecke eigenvalue of $\Theta(\phi_{k,i}).$
By the assumptions of Theorem~\ref{mthm} the Ramanujan bound holds for $\lambda_{\Theta(\phi_{k,i})}(N)$, and  we have 
\[
\lambda_{\Theta(\phi_{k,i})}(N)\ll N^{\frac{k+m/2-1}{2}+\epsilon},
\]
where the implied constant involved in $\ll$ only depends on $\epsilon$. Hence, 
\begin{equation}\label{multvar}
\begin{split}
\va(N,r)&=\sum_{k=1}^{\infty} h_{N,r}(k)^2 N^{-k}\Theta(\phi_{k,i},N)^2
\\
&\ll N^{m/2-1+\epsilon} \sum_{k=1}^{\infty} h_{N,r}(k)^2 \Theta(\phi_{k,i},1)^2
\end{split}
\end{equation}
It follows from the definition  of the spherical transform $h_{N,r}(k)$ in  \eqref{sphtrans} that 
\[
h_{N,r}(k)= h_{1,\frac{r}{\sqrt{N}}}(k).
\]
Hence, by Theorem~\ref{Siegelthmm}, we have 
\[
 \va(1,\frac{r}{\sqrt{N}})=\sum_{k=1}^{\infty} h_{N,r}(k)^2 \Theta(\phi_{k,i},1)^2.
\]
By substituting the above in \eqref{multvar}, we obtain
\[
\va(N,r) \ll N^{m/2-1+\epsilon}  \va(1,\frac{r}{\sqrt{N}}).
\]
Next, we give an upper bound on $\va(1,\frac{r}{\sqrt{N}}).$ For simplicity we write $\eta=\frac{r}{\sqrt{N}}$ and $K_{\eta}(\vec{x},\vec{y})=K_{\frac{r}{\sqrt{N}},1}(\vec{x},\vec{y})$ for $\vec{x},\vec{y}\in V_{A_i,1}(\mathbb{R}) $.  By \eqref{Siegelvarfor}, we have 
\begin{equation*}
\begin{split}
\va(1,\eta)&\leq \max_{i} \Big(\int_{V_{A_i,1}(\mathbb{R})} \Big( \big(\sum_{\vec{Y}\in  V_{A_i,1}(\mathbb{Z}) }K_{\eta}(\vec{x},\vec{y})\big)- R(1)  \Big)^2  d\mu_i(\vec{x}) \Big)
\\
&\leq \sup_{i,\vec{x}} \Big( \big(\sum_{\vec{Y}\in  V_{A_i,1}(\mathbb{Z}) }K_{\eta}(\vec{x},\vec{y})\big)- R(1)  \Big) \int_{V_{A_i,1}(\mathbb{R})} \Big| \big(\sum_{\vec{Y}\in  V_{A_i,1}(\mathbb{Z}) }K_{\eta}(\vec{x},\vec{y})\big)- R(1)  \Big|  d\mu_i(\vec{x}),
\\
&\leq  C_{1,\eta} (\max_{i}R_{A_i}(1)+R(1)) \ll C_{1,\eta} \ll \eta^{-(m-1)}.
\end{split}
\end{equation*}
Therefore, we have
$$
\va(N,r) \ll \frac{N^{m/2-1+\epsilon}}{\eta^{m-1}}.
$$
By assuming $m\geq 4,$ and bounding the local densities in Hardy-Littlewood formula \cite[Remark 1.7]{Optimal}, we have
\[
N^{m/2-1+\epsilon} \ll N^{\epsilon} R_F(N).
\]
Therefore,
\[
\va(N,r)\ll \frac{N^{\epsilon} R_F(N)}{\eta^{m-1}},
\]
where  the implicit constant in $\ll$ only depends only on $A$ and $\epsilon.$ This completes the proof of Theorem~\ref{mthm}.
\end{proof}
\subsection{Proof of Corollary~\ref{sthm}}
\begin{proof} Assume that $\eta \ll R_F(N)^{-\frac{1}{(m-1)}-\epsilon}.$  We prove the second part of the corollary. Let 
\(
E(N,\eta):= \{  \vec{x}\in V_1(\mathbb{R}): \forall \vec{y}\in  \frac{1}{\sqrt{N}}V_N(\mathbb{Z}), \sqrt{F(\vec{x}-\vec{y})}\geq \eta\}.
\)
Since $k$ is positive and  $k(x)=1$ for $x\in(-1,1),$ we have 
\[
 \big(1-\mu(E(N,\eta))\big) C_{\eta} \leq  \int_{V_1(\mathbb{R}) } \sum_{\vec{y}\in  \frac{1}{\sqrt{N}} V_N(\mathbb{Z})} K_{\eta}(\vec{x},\vec{y}) d\mu(\vec{x})=R_F(N). 
\]
It is easy to check that $C_{\eta}\gg \eta^{-(m-1)}.$ Therefore, by \eqref{lohardy}
\[
 \big(1-\mu(E(N,\eta))\big) \ll R_F(N) \eta^{(m-1)} \ll R_F(N)^{-(m-1)\epsilon}  \ll N^{-\epsilon}.
\]
This completes the proof of the second part of the corollary. Next, assume that  $\eta \gg R_F(N)^{-\frac{1}{(m-1)}+\epsilon}$. 
By Chebyshev's inequality and Theorem~\ref{mthm}
\[
\mu(E(N,\eta)) R_F(N)^2\leq \va_F(N,\eta) \ll \frac{N^{\epsilon} R_F(N)} {\eta^{m-1}}.
\]
%By assuming $m\geq 4,$ and bounding the local densities in the Siegel representation formula, we have
%\[R(N)\gg N^{m/2-1}.\]
Therefore, by \eqref{lohardy}
\begin{equation}\label{chebche}
\mu(E(N,\eta))  \ll \frac{N^{\epsilon}} {\eta^{m-1} R_F(N)}  \ll N^{-\epsilon} .
\end{equation}
%where $\eta=\frac{r}{\sqrt{N}}$ and the implicit constant in $\ll$ only depends only on $A.$
%$$
%\frac{\va}{E_N^2}\ll  \frac{1}{N^{\frac{d-2}{2}}^{(d-1)}}.
%$$
This completes the proof of the corollary. 
\end{proof}
\subsection{Proof of Corollary~\ref{sggn}}
\begin{proof}
Recall that 
\(
 \bar{K}_{m}:=\lim_{\delta\to 0}\limsup_{\epsilon_0 \to 0}\frac{\log \big(\#S^{m-1}_{N_{\delta,\epsilon_0}}(\mathbb{Z}) \big) }{\log\big( 1/\text{vol }(C(\vec{x},\epsilon_{0}))\big)},
\) and $\bar{K}_{m}\geq 1.$ It is enough to show that $\bar{K}_{m} \leq 1+\epsilon$ for any $\epsilon >0.$
 We have $\text{vol }(C(\vec{x},\epsilon_{0}) \gg \epsilon_0^{m-1}.$ Since $m$ is even then Theorem~\ref{mthm} and  inequality~\eqref{chebche} holds unconditionally  for $S^{m-1}$, and we have 
 \[
\mu(E(N,\epsilon_0))  \ll \frac{N^{\epsilon}} {\text{vol }(C(\vec{x},\epsilon_{0}) \#S^{m-1}_{N}(\mathbb{Z})}.
 \]
 Hence, by the definition of $N_{\delta,\epsilon_0},$ we have 
 \[
\#S^{m-1}_{N_{\delta,\epsilon_0}}(\mathbb{Z}) \leq \text{vol }(C(\vec{x},\epsilon_{0})^{(-1-\delta/(m-1)-\epsilon)}.
 \]
 Therefore, 
 \[
 \bar{K}_{m}:=\lim_{\delta\to 0}\limsup_{\epsilon_0 \to 0}\frac{\log \big(\#S^{m-1}_{N_{\delta,\epsilon_0}}(\mathbb{Z}) \big) }{\log\big( 1/\text{vol }(C(\vec{x},\epsilon_{0}))\big)}\leq  \lim_{\delta\to 0}{(1+\delta/(m-1)-\epsilon)} \leq 1+\epsilon.
 \]
 for every $\epsilon>0$. This concludes the proof of Corollary~\ref{sggn}.

\end{proof}

\subsection{Proof of Corollary~\ref{strongGGN}}
\begin{proof}
The proof is based on a Borel-Cantelli argument. Define
\[
A_{k,\epsilon}:=\left\{x\in S^{m-1}(\mathbb{R}): |x-z|>\epsilon , \forall z\in S^{m-1}(\mathbb{Z}[1/p]) \text{ with } H(z)\leq p^{k}  \right\}.
\]
We note that there is a one-to-one correspondence between  $z\in S^{m-1}(\mathbb{Z}[1/p]) \text{ with } H(z)\leq  p^{k}$ and the integral points  $\frac{1}{p^k}S^{m-1}_{p^{2k}}(\mathbb{Z}).$
Note that by using  $N=p^{2k}$ in Theorem~\ref{mthm}, it follows that inequality~\eqref{chebche} holds unconditionally  for $\frac{1}{p^k}S^{m-1}_{p^{2k}}(\mathbb{Z})$, and we have 
\[
\mu(A_{k,\epsilon})\ll \frac{p^{2k\epsilon}} {\epsilon^{m-1} p^{k(m-2)} }.
\]
Let $B_{k,\delta}:=A_{k,p^{-k(\frac{m-2}{m-1}-\delta)}}.$ By the above inequality $\mu(B_{k,\delta})\ll p^{-k(\delta(m-1)-2\epsilon)} $ for any $\epsilon>0.$ Note that $\sum_{k} \mu(B_{k,\delta})\ll \sum_{k}p^{-k(\delta(m-1)-2\epsilon)} \ll \infty. $ By the Borel-Cantelli lemma, for almost all $x\in S^{m-1}(\mathbb{R}),$ there exists $k_{x}$ such that $x\notin B_{k}$ for every $k>k_{x}.$ In other words, for almost every $x\in S^{m-1}(\mathbb{R}),$ $\delta>0$, and $\varepsilon \in (0,\varepsilon_0(x,\delta)),$ there exists $z \in  S^{m-1}(\mathbb{Z}[1/p]) $ such that 
\(
|x-z|_{\infty}\leq \epsilon \text{ and } H(z)\leq \epsilon^{-\frac{m-1}{m-2}-\delta}.\)  This concludes the proof of Corollary~\ref{strongGGN}.\end{proof}

\section{The Siegel variance formula}
Recall the definition of the Siegel variance sum $\va(B,r)$ in \eqref{Siegelvarfor}.  In this section, we give an adelic integration formula for  $\va(B,r).$
First, we write $\va(A_i,B,r)$ and $\va(B,r)$ in terms of the $O_{A_i}(\mathbb{Z})$ orbits of $V_{A_i,B}(\mathbb{Z}).$
%Note that $O_{A_i}(\mathbb{Z})$ acts on $V_{A_i,B}(\mathbb{Z})$. 
%In the following
%we give another formula for $\va(B,r)$ in terms of a sum over the integral orbit of the orthogonal group. 
We define the $O_{A_i}(\mathbb{Z})$ invariant function 
\[\tilde{K}_{r}(\vec{X},\vec{Y}):= \sum_{\gamma\in O_{A_i}(\mathbb{Z})}  K_{r,B}(\gamma \vec{X},\vec{Y}), \]
where $ \vec{X},\vec{Y} \in O_{A_i}(\mathbb{Z})\backslash  V_{A_i,B}(\mathbb{Z}). $

\begin{lem}\label{easylem}
We have 
\[
\frac{\va(A_i,B,r)}{|O_{A_i}(\mathbb{Z})|}= \int_{O_{A_i}(\mathbb{Z})\backslash V_{A_i,B}(\mathbb{R})} \Big(\big( \sum_{\vec{Y}\in O_{A_i}(\mathbb{Z})\backslash  V_{A_i,B}(\mathbb{Z}) } \frac{\tilde{K}_{r}(\vec{X},\vec{Y})}{|O_{A_i,\vec{Y}(\mathbb{Z})}|}\big)- R_{A_i}(B)  \Big)^2  d
\mu_i(\vec{X}),
\]
where  $|O_{L_i,\vec{Y}(\mathbb{Z})}|$ is the size of the stabilizer of $\vec{Y}$ in $O_{A_i}(\mathbb{Z}).$
\end{lem}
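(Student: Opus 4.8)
The plan is to unfold the $O_{A_i}(\mathbb{Z})$-action inside the variance integral \eqref{varind}. Recall that $\va(A_i,B,r)$ is an integral over $V_{A_i,B}(\mathbb{R})$ against the $O_{A_i}(\mathbb{R})$-invariant probability measure $\mu_i$, of the square of $\big(\sum_{\vec{Y}\in V_{A_i,B}(\mathbb{Z})} K_{r,B}(\vec{X},\vec{Y})\big) - R_{A_i}(B)$. Since the integrand is $O_{A_i}(\mathbb{Z})$-invariant in $\vec{X}$ (because $K_{r,B}$ is a point-pair invariant function and $O_{A_i}(\mathbb{Z})$ permutes $V_{A_i,B}(\mathbb{Z})$), the integral over $V_{A_i,B}(\mathbb{R})$ equals $|O_{A_i}(\mathbb{Z})|$ times the integral over the quotient $O_{A_i}(\mathbb{Z})\backslash V_{A_i,B}(\mathbb{R})$ — this produces the factor $|O_{A_i}(\mathbb{Z})|$ on the left-hand side. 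The only genuine content is then rewriting the inner sum $\sum_{\vec{Y}\in V_{A_i,B}(\mathbb{Z})} K_{r,B}(\vec{X},\vec{Y})$ as a sum over the orbit space $O_{A_i}(\mathbb{Z})\backslash V_{A_i,B}(\mathbb{Z})$.

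First I would verify the $O_{A_i}(\mathbb{Z})$-invariance of the full integrand in $\vec{X}$: for $\delta \in O_{A_i}(\mathbb{Z})$, point-pair invariance gives $K_{r,B}(\delta\vec{X},\vec{Y}) = K_{r,B}(\vec{X},\delta^{-1}\vec{Y})$, and since $\vec{Y}\mapsto\delta^{-1}\vec{Y}$ is a bijection of $V_{A_i,B}(\mathbb{Z})$, the sum $\sum_{\vec{Y}} K_{r,B}(\vec{X},\vec{Y})$ is unchanged; $R_{A_i}(B)$ is a constant. Hence the integrand descends to the quotient, and $\va(A_i,B,r) = |O_{A_i}(\mathbb{Z})|\int_{O_{A_i}(\mathbb{Z})\backslash V_{A_i,B}(\mathbb{R})}(\cdots)^2 d\mu_i$ once $\mu_i$ is interpreted suitably on the quotient (normalizing so that $\int_{O_{A_i}(\mathbb{Z})\backslash V_{A_i,B}(\mathbb{R})} d\mu_i = 1/|O_{A_i}(\mathbb{Z})|$, consistently with the ambient normalization). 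Dividing by $|O_{A_i}(\mathbb{Z})|$ yields the left-hand side of the claimed identity.

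Next I would handle the inner sum. Partition $V_{A_i,B}(\mathbb{Z})$ into $O_{A_i}(\mathbb{Z})$-orbits. Fix a set of orbit representatives $\vec{Y}$; the orbit of $\vec{Y}$ has size $|O_{A_i}(\mathbb{Z})|/|O_{A_i,\vec{Y}}(\mathbb{Z})|$, where $O_{A_i,\vec{Y}}(\mathbb{Z})$ is the stabilizer. Then
\[
\sum_{\vec{Y}'\in V_{A_i,B}(\mathbb{Z})} K_{r,B}(\vec{X},\vec{Y}') = \sum_{\vec{Y}\in O_{A_i}(\mathbb{Z})\backslash V_{A_i,B}(\mathbb{Z})}\ \frac{1}{|O_{A_i,\vec{Y}}(\mathbb{Z})|}\sum_{\gamma\in O_{A_i}(\mathbb{Z})} K_{r,B}(\vec{X},\gamma^{-1}\vec{Y}),
\]
since running $\gamma$ over all of $O_{A_i}(\mathbb{Z})$ hits each element of the orbit of $\vec{Y}$ exactly $|O_{A_i,\vec{Y}}(\mathbb{Z})|$ times. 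Using point-pair invariance once more, $K_{r,B}(\vec{X},\gamma^{-1}\vec{Y}) = K_{r,B}(\gamma\vec{X},\vec{Y})$, so the inner $\gamma$-sum is exactly $\tilde{K}_r(\vec{X},\vec{Y})$ as defined. Substituting gives the inner sum in the form $\sum_{\vec{Y}\in O_{A_i}(\mathbb{Z})\backslash V_{A_i,B}(\mathbb{Z})} \tilde{K}_r(\vec{X},\vec{Y})/|O_{A_i,\vec{Y}}(\mathbb{Z})|$, which is precisely what appears on the right-hand side.

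The argument is essentially bookkeeping; the only point that requires care — and which I regard as the main obstacle — is keeping the measure normalizations and the orbit-counting weights $1/|O_{A_i,\vec{Y}}(\mathbb{Z})|$ consistent, so that the factor of $|O_{A_i}(\mathbb{Z})|$ coming from unfolding the integral over $V_{A_i,B}(\mathbb{R})$ matches the one divided out on the left, and so that the $\tilde{K}_r$ on the quotient is genuinely $O_{A_i}(\mathbb{Z})$-invariant in both variables (for which the symmetric normalization \eqref{normalization} of $K_{r,B}$ is used). Once these normalizations are pinned down, the identity follows by combining the two displays above and squaring.
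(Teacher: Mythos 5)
Your argument is correct and is essentially the paper's own: both prove the orbit–stabilizer identity $\sum_{\vec{Y}'\in V_{A_i,B}(\mathbb{Z})} K_{r,B}(\vec{X},\vec{Y}') = \sum_{\vec{Y}\in O_{A_i}(\mathbb{Z})\backslash V_{A_i,B}(\mathbb{Z})} \tilde{K}_r(\vec{X},\vec{Y})/|O_{A_i,\vec{Y}}(\mathbb{Z})|$ and then fold the integral to the fundamental domain $O_{A_i}(\mathbb{Z})\backslash V_{A_i,B}(\mathbb{R})$ using $O_{A_i}(\mathbb{Z})$-invariance of the integrand, which produces the $|O_{A_i}(\mathbb{Z})|$ factor. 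You merely present the two steps in the opposite order and spell out the bookkeeping the paper leaves implicit.
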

\begin{proof}
We have
\[
\sum_{\vec{Y}\in  V_{A_i,B}(\mathbb{Z}) }K_{r,B}(\vec{X},\vec{Y})= \sum_{\vec{Y}\in O_{A_i}(\mathbb{Z})\backslash  V_{A_i,B}(\mathbb{Z}) } \frac{\tilde{K}_{r}(\vec{X},\vec{Y})}{|O_{A_i,\vec{Y}(\mathbb{Z})}|}.
\]
Therefore, 
\begin{equation*}
\begin{split}
\va(A_i,B,r)=
 \int_{V_{A_i,B}(\mathbb{R})} \Big( \big(\sum_{\vec{Y}\in O_{A_i}(\mathbb{Z})\backslash  V_{A_i,B}(\mathbb{Z}) } \frac{\tilde{K}_{r}(\vec{X},\vec{Y})}{|O_{A_i,\vec{Y}(\mathbb{Z})}|}\big)- R_{A_i}(B)  \Big)^2  d\mu_i(\vec{X}).
\end{split}
\end{equation*}
The lemma follows from the fact that $\tilde{K}_{r}(\vec{X},\vec{Y})$ is $O_{A_i}(\mathbb{Z})$ invariant on the $\vec{X}$ variable  and $O_{A_i}(\mathbb{Z})\backslash V_{A_i,B}(\mathbb{R})$ is a fundamental domain for this action. 
\end{proof}

\begin{lem}\label{lem22}
We have
\begin{equation*}
\va(B,r)=\dfrac{\sum_{A_i} \int_{O_{A_i}(\mathbb{Z})\backslash V_{A_i,B}(\mathbb{R})} \Big(\big( \sum_{Y\in O_{A_i}(\mathbb{Z})\backslash  V_{A_i,B}(\mathbb{Z}) } \frac{\tilde{K}_{r}(\vec{X},\vec{Y})}{|O_{L_i,\vec{Y}(\mathbb{Z})}|}\big)- R(B)  \Big)^2  d
\mu_i(\vec{X}) }{{\sum_{A_i} \frac{1}{|O_{A_i}(\mathbb{Z})|}}}.
\end{equation*}
\end{lem}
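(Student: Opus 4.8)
The plan is to combine Lemma~\ref{easylem} with the definition \eqref{Siegelvarfor} of $\va(B,r)$, the only nontrivial point being to absorb the two ``discrepancy" terms $\va(A_i,B,r)$ and $(R_{A_i}(B)-R(B))^2$ into a single square. First I would recall from \eqref{Siegelvarfor} that
\[
\va(B,r)=\dfrac{\sum_{A_i}\frac{1}{|O_{A_i}(\mathbb{Z})|}\big(\va(A_i,B,r)+(R_{A_i}(B)-R(B))^2\big)}{\sum_{A_i}\frac{1}{|O_{A_i}(\mathbb{Z})|}},
\]
so that each numerator summand, after multiplying by $|O_{A_i}(\mathbb{Z})|$, equals $\va(A_i,B,r)/|O_{A_i}(\mathbb{Z})| \cdot |O_{A_i}(\mathbb{Z})|^2$—wait, more simply: Lemma~\ref{easylem} already gives $\va(A_i,B,r)/|O_{A_i}(\mathbb{Z})|$ as the stated integral with $R_{A_i}(B)$ in place of $R(B)$. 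So the task is to show that adding $(R_{A_i}(B)-R(B))^2/|O_{A_i}(\mathbb{Z})|$ to that integral produces exactly the integral with $R(B)$ substituted for $R_{A_i}(B)$.

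The key computation is the elementary expansion of a square around a shifted center. Write $S_i(\vec{X}):=\sum_{\vec{Y}\in O_{A_i}(\mathbb{Z})\backslash V_{A_i,B}(\mathbb{Z})}\tilde K_r(\vec{X},\vec{Y})/|O_{A_i,\vec{Y}}(\mathbb{Z})|$ for the inner sum. Then
\[
\big(S_i(\vec{X})-R(B)\big)^2=\big(S_i(\vec{X})-R_{A_i}(B)\big)^2+2\big(S_i(\vec{X})-R_{A_i}(B)\big)\big(R_{A_i}(B)-R(B)\big)+\big(R_{A_i}(B)-R(B)\big)^2.
\]
Integrating over the fundamental domain $O_{A_i}(\mathbb{Z})\backslash V_{A_i,B}(\mathbb{R})$ against $d\mu_i$: the first term integrates to $\va(A_i,B,r)/|O_{A_i}(\mathbb{Z})|$ by Lemma~\ref{easylem}; the last term, being constant in $\vec{X}$, integrates to $(R_{A_i}(B)-R(B))^2\cdot \mathrm{vol}(O_{A_i}(\mathbb{Z})\backslash V_{A_i,B}(\mathbb{R}))$; and the cross term vanishes provided $\int S_i(\vec{X})\,d\mu_i(\vec{X})=R_{A_i}(B)\cdot\mathrm{vol}(O_{A_i}(\mathbb{Z})\backslash V_{A_i,B}(\mathbb{R}))$. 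The main (mild) obstacle is bookkeeping the normalizations so that $\mathrm{vol}(O_{A_i}(\mathbb{Z})\backslash V_{A_i,B}(\mathbb{R}))=1/|O_{A_i}(\mathbb{Z})|$ and the cross-term identity holds: this is precisely the content of the normalization \eqref{normalization} (which makes $\int_{V_{A_i,B}(\mathbb{R})}K_{r,B}(\vec{X},\vec{Y})\,d\mu_i(\vec{X})=1$ for each fixed $\vec{Y}$), unfolded through the $O_{A_i}(\mathbb{Z})$-orbit decomposition exactly as in the proof of Lemma~\ref{easylem}.

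Granting these normalizations, adding $(R_{A_i}(B)-R(B))^2/|O_{A_i}(\mathbb{Z})|$ to $\va(A_i,B,r)/|O_{A_i}(\mathbb{Z})|$ yields $\int_{O_{A_i}(\mathbb{Z})\backslash V_{A_i,B}(\mathbb{R})}(S_i(\vec{X})-R(B))^2\,d\mu_i(\vec{X})$. Summing over $A_i\in C(A)$ in the numerator and dividing by $\sum_{A_i}1/|O_{A_i}(\mathbb{Z})|$ gives the claimed formula. I would close by remarking that the interchange of the finite $O_{A_i}(\mathbb{Z})$-sum with the integral, and the identification of $O_{A_i}(\mathbb{Z})\backslash V_{A_i,B}(\mathbb{R})$ as a fundamental domain, are justified exactly as in Lemma~\ref{easylem}, so no new analytic input is needed.
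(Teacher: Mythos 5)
Your proposal is correct and follows essentially the same route as the paper: both start from \eqref{Siegelvarfor} and Lemma~\ref{easylem}, and both rest on the observation that $\int_{V_{A_i,B}(\mathbb{R})}\sum_{\vec Y\in V_{A_i,B}(\mathbb{Z})}K_{r,B}(\vec X,\vec Y)\,d\mu_i(\vec X)=R_{A_i}(B)$ (from \eqref{normalization}), which makes the cross-term in the expansion of $(S_i(\vec X)-R(B))^2$ around $R_{A_i}(B)$ vanish. The paper compresses the expand-the-square step into a single line ("Hence, by Lemma~\ref{easylem}"); you simply spell out that same computation, including the volume identification $\mathrm{vol}(O_{A_i}(\mathbb{Z})\backslash V_{A_i,B}(\mathbb{R}))=1/|O_{A_i}(\mathbb{Z})|$, which the paper leaves implicit.
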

 \begin{proof}  By \eqref{normalization}, we have 
 \[
 \int_{V_{A_i,B}(\mathbb{R})} \sum_{\vec{Y}\in  V_{A_i,B}(\mathbb{Z}) }K_{r,B}(\vec{X},\vec{Y}) d\mu_i(\vec{X})=R_{A_i}(B).
 \]
 Hence, by lemma~\ref{easylem}
 \begin{multline*} \dfrac{ \va(A_i,B,r)+\big(R_{A_i}(B)-R(B)\big)^2}{O_{A_i} (\mathbb{Z})}\\
  =  \int_{O_{A_i}(\mathbb{Z})\backslash V_{A_i,B}(\mathbb{R})} \Big(\big( \sum_{Y\in O_{A_i}(\mathbb{Z})\backslash  V_{A_i,B}(\mathbb{Z}) } \frac{\tilde{K}_{r}(\vec{X},\vec{Y})}{|O_{L_i,\vec{Y}(\mathbb{Z})}|}\big)- R(B)  \Big)^2  d
\mu_i(\vec{X}).
\end{multline*}
By summing both side of the above identity over $A_i$ and dividing by ${\sum_{A_i} \frac{1}{|O_{A_i}(\mathbb{Z})|}}$, we conclude the lemma. 
% 
% 
%   \begin{equation}\label{varind}
%\begin{split}
%&\va(A_i,B,r)=|O_{A_i}(\mathbb{Z})| \int_{O_{A_i}(\mathbb{Z})\backslash V_{A_i,B}(\mathbb{R})} \Big( \big( \sum_{Y\in V_{A_i,B}(\mathbb{Z}) }K_{r,B}(\vec{X},\vec{Y})\big)- R_{A_i}(B)  \Big)^2  d\mu_i(\vec{X})
%\\
%&=|O_{A_i}(\mathbb{Z})| \int_{O_{A_i}(\mathbb{Z})\backslash V_{A_i,B}(\mathbb{R})} \Big(\big( \sum_{Y\in O_{A_i}(\mathbb{Z})\backslash   V_{A_i,B}(\mathbb{Z}) } \frac{\tilde{K}_{r}(\vec{X},\vec{Y})}{|O_{L_i,\vec{Y}(\mathbb{Z})}|}\big)- R_{A_i}(B)  \Big)^2  d
%\mu(\vec{X})
%\end{split}
%\end{equation}
%  Therefore,
%\begin{equation}
%\va(B,r)=\dfrac{\sum_{A_i} \int_{O_{A_i}(\mathbb{Z})\backslash V_{A_i,B}(\mathbb{R})} \Big(\big( \sum_{Y\in O_{A_i}(\mathbb{Z})\backslash  V_{A_i,B}(\mathbb{Z}) } \frac{\tilde{K}_{r}(\vec{X},\vec{Y})}{|O_{L_i,\vec{Y}(\mathbb{Z})}|}\big)- R_{A_i}(B)  \Big)^2  d
%\mu(\vec{X}) }{{\sum_{A_i} \frac{1}{|O_{A_i}(\mathbb{Z})|}}}
%\end{equation}
\end{proof}
\subsection{The Siegel variance as an adelic integral}\label{adelicsec}
In this section, we give a formula for $\va(B,r)$ in terms of an integral over a double quotient of the adelic orthogonal group.
\subsubsection{Adelic point-pair invariant function}
We extend the point-pair invariant function $K_{r,B}(\vec{X},\vec{Y})$ (defined in \eqref{pp}) into an automorphic point-pair invariant function on the adelic points of the orthogonal group $O_A$. We begin by defining the adelic points of the orthogonal group $O_A.$
%
%Let $F(\vec{x})$ be a positive definite integral quadratic form, where
%$$
%F(\vec{x}):=\vec{x}^{\intercal}A\vec{x},
%$$
%where $A$ is a positive symmetric matrix with half integer coordinates. 
Let $O_A(R)$ denote the orthogonal group of $A$ with coefficients in a commutative ring $R,$ which we consider as a subset of $ GL_m(R):$
\[
O_A(R):=\big\{X\in GL_m(R):  X^{\intercal} AX =A  \big\}.
\]
%Let  $V_N(R):=\{(x_1,\dots,x_d)\in R^d: F(x_1,\dots x_d)=N  \}$ be the quadric with the level $N\in \mathbb{Z}.$
Let $\mathbb{A}_f=\hat{\prod}_p^{\mathbb{Z}_p} \mathbb{Q}_p$ be the ring of finite adeles which is the restrictive direct  product of $\mathbb{Q}_p$ with respect to $\mathbb{Z}_p.$ Let 
$\mathcal{L}_{A,B}$ denote the space of $(\vec{X},L),$ where  $\vec{X}\in  V_{A,B}(\mathbb{R})$ and  $L\subset \mathbb{Q}^m$ is a lattice  where $(L,A)$ has the same genus as $(\mathbb{Z}^m,A):$
\[
\mathcal{L}_{A,B}:=\big\{(\vec{X},L): \vec{X}\in V_{A,B}(\mathbb{R}), L\subset \mathbb{Q}^m, \text{ and }(L\otimes \mathbb{Z}_p,A) \sim (\mathbb{Z}_p^d,A), \forall \text{ prime } p\big\},
\]
where $(L\otimes \mathbb{Z}_p,A) \sim (\mathbb{Z}_p^d,A)$ means there exists $g\in O_A(\mathbb{Q}_p) $ such that $g\mathbb{Z}_p^d=L\otimes \mathbb{Z}_p.$
Note that $O_A(\mathbb{A}_{\mathbb{Q}})$ acts transitively on $\mathcal{L}_{A,B}$ by: 
\[
(g_{\infty},\prod_p g_p). (\vec{X},L):= \Big(g_{\infty} \vec{X}, \big( \prod_p g_p (L\otimes \mathbb{Z}_p)\big) \cap \mathbb{Q}^n\Big),
\]
where $(g_{\infty},\prod_p g_p)\in O_A(\mathbb{A}_{\mathbb{Q}}).$ It is well-known that $C(A)$, the genus class of $A$,  is isomorphic to 
$
O_A(\mathbb{Q})\backslash O_A(\mathbb{A}_f)/O_A(\prod_p\mathbb{Z}_p).$ 
Suppose that  $\{L_i\subset \mathbb{Q}^n: 1 \leq i\leq h   \}$  is a representative set for the genus class of the lattice $(\mathbb{Z}^n,A)$ such that $(L_i,A)$ is isomorphic to $(\mathbb{Z}^m,A_i),$ which means $L_i^{\intercal}AL_i=A_i.$ 
%Recall that $K_{r,B}(\vec{X},\vec{Y})$ is a smooth function defined on $V_{A,B}(\mathbb{R})\times V_{A,B}(\mathbb{R})$ such that 
%\[
%\begin{cases}
%K_{r,B}(\vec{X},\vec{Y})=0 \text{ if }  |\vec{X}-\vec{Y}| \gg r,
% \\
%K_{r,B}(g_{\infty}\vec{X},g_{\infty}\vec{Y})=K(\vec{X},\vec{Y}) \text{ for every } \vec{X},\vec{Y} \in V_{A,B}(\mathbb{R})\text{ and }g_{\infty}\in O_A(\mathbb{R}),
%\\
%\int_{V_{A,B}(\mathbb{R})} K_{r,B}(\vec{X},\vec{Y}) d\mu(\vec{Y})=1,
%\\
%\int_{V_{A,B}(\mathbb{R})}  d\mu(\vec{Y})=1,
% \end{cases}
%\] 
%where $\mu$ is a normalized $O_{A}(\mathbb{R})$ invariant measure.
We extend $K_{r,B}(\vec{X},\vec{Y})$ into a function on  $\mathcal{L}_{A,B}$ and denote the extension by $K_{r,B}\big((\vec{X},L_1),(\vec{Y},L_2)  \big)$  again. Define 
\[
K_{r,B} \big((\vec{X},L_1),  (\vec{Y},L_2) \big):=
\begin{cases} K_{r,B}(\vec{X},\vec{Y}) &\text{ if } L_1=L_2,
\\
0 &\text{ otherwise.}
\end{cases}
\]
We sum $K_{r,B}(\vec{X},\vec{Y})$ over the orbit of $O_A(\mathbb{Q})$, and obtain 
\[
\mathcal{K}_{r,B} \big((\vec{X},L_1),  (\vec{Y},L_2) \big):= \sum_{\gamma \in O_A(\mathbb{Q})} K_{r,B} \big(\gamma(\vec{X},L_1),  (\vec{X},L_2) \big).
\]
Note that $\mathcal{K}_{r,B}$ is invariant by the action of $O_A(\mathbb{Q})$ on the left:
\[
\mathcal{K}_{r,B}\big((\vec{X},L_1),  (\vec{Y},L_2) \big)=\mathcal{K}_{r,B} \big(\gamma(\vec{X},L_1),  (\vec{Y},L_2) \big)=\mathcal{K}_{r,B} \big((\vec{X},L_1), \gamma (\vec{Y},L_2) \big),
\]
for every $\gamma\in O_A(\mathbb{Q}).$ Hence, $\mathcal{K}_{r,B} \big((\vec{X},L_1),  (\vec{Y},L_2) \big)$ defines an automorphic kernel   on $O_A(\mathbb{Q})\backslash \mathcal{L}_{A,B} \times O_A(\mathbb{Q})\backslash \mathcal{L}_{A,B}.$ 
Given $(\vec{X},L_1),  (\vec{Y},L_2) \in \mathcal{L}_{A,B},$ it follows that 
\begin{equation}\label{eqdef}
\mathcal{K}_{r,B} \big((\vec{X},L_1),  (\vec{Y},L_2) \big)= \delta(L_1,L_2) \sum_{\gamma \in O_{L_2}}K_{r,B}(\gamma \gamma_{(1,2)}\vec{X},\vec{Y}),
\end{equation}
where \[
\delta(L_1,L_2)=\begin{cases}
1 \text{ if $L_1$ and $L_2$ are in the same genuss class of lattices, }
\\
0 \text{ otherwise,}
\end{cases}
\]
 and $O_{L_2}$ is the stabilizer of $L_2$ in the  orthogonal group $O_A(Q),$ and if $\delta(L_1,L_2)=1,$ then there exists  $ \gamma_{(1,2)} \in O_A(\mathbb{Q})$ which maps $L_1$  to $L_2$. Recall that $\vec{E}_B=\vec{E}\sqrt{B}.$ Fix
\(
\vec{\mathcal{E}_0}:=(\vec{E}_B, \mathbb{Z}^m)\in \mathcal{L}_{A,B}.
\) Let $O_{A,\vec{E}}(\mathbb{R})$ denote the stabilizer of $\vec{E}_B,$ then $O_{A,\vec{E}}(\mathbb{R})O_A(\prod_p \mathbb{Z}_p)$ is the stabilizer of $\mathcal{E}_0\in\mathcal{L}_{A,B},$ and we have the following isomorphism 
\[
\mathcal{L}_{A,B}=  O_A(\mathbb{A}_{\mathbb{Q}})/O_{A,\vec{\mathcal{E}_0}}(\mathbb{A}_{\mathbb{Q}}).
\] 
Therefore, we can view $\mathcal{K}_{r,B}$ as an automorphic point-pair invariant  function  on 
\[ O_A(\mathbb{Q})\backslash O_A(\mathbb{A}_{\mathbb{Q}})/O_{A,\vec{\mathcal{E}_0}}(\mathbb{A}_{\mathbb{Q}}) \times  O_A(\mathbb{Q})\backslash O_A(\mathbb{A}_{\mathbb{Q}})/ O_{A,\vec{\mathcal{E}_0}}(\mathbb{A}_{\mathbb{Q}}).\]

\subsubsection{Integration formula for $\va(B,r)$}
% We consider the following variance sum which is the adelic version of \ref{varsum},
%\[
%\sum_{H\in C(F)}
%\]
%
%
%
%
%
%
%
%   we study the following variance sum over $\mathcal{S}_B\subset V_1(\mathbb{R})$
%\[
%\va:=\int_{V_1} \Big(\sum_{y\in \mathcal{S}_B} K_{r,B}(\vec{X},\vec{Y})- E_N \Big)^2 d\mu_i(x).
%\]
%
Let 
\[
\mathcal{S}_B:=\{(\vec{X},L)\in \mathcal{L}_{A,B}:  \vec{x}_j \in L, \text{ where }   \vec{x}_j \text{ is the $j$-th column of } \vec{X} \text{ for }1\leq j\leq n \}.
\]
% It is well-known that $C(A)$, the genus class of $A$,  is isomorphic to 
%$
%O_A(\mathbb{Q})\backslash O_A(\mathbb{A}_f)/O_A(\prod_p\mathbb{Z}_p).$ 
%Suppose that  $\{L_i\subset \mathbb{Q}^n: 1 \leq i\leq h   \}$  is a representative set for the genus class of the lattice $(\mathbb{Z}^n,A)$ such that $(L_i,A)$ is equivalent to $(\mathbb{Z}^m,A_i).$ 
Note that 
\(\mathcal{S}_B=\cup_{i=1}^{h}  \mathcal{S}_{L_i,B}, \)
where
\begin{equation}\label{lattice}  \mathcal{S}_{L_i,B}:=\{(\vec{X},L)\in \mathcal{S}_B: (L,A) \text{ is equivalent to } (L_i,A)\}. \end{equation}
 Note that $ \mathcal{S}_{L_i,B}$ is invariant by the action of  $O_A(\mathbb{Q}).$   Finally, we define the adelic  variance:
\[
\mathcal{VAR}(B,r):=\int_{O_A(\mathbb{Q})\backslash \mathcal{L}_{A,B}} \Big(\sum_{\mathcal{Q}\in O_A(\mathbb{Q})\backslash \mathcal{S}_B} \dfrac{\mathcal{K}_{r,B}(\mathcal{X},\mathcal{Q})}{|O_{\mathcal{Q}}(\mathbb{Q})|}- \mathcal{R}(B) \Big)^2 d\tilde{\mu}(\mathcal{X}),
\]
where $|O_{\mathcal{Q}}(\mathbb{Q})|$ is the size of the stabilizer of a representative of $\mathcal{Q}\in O_A(\mathbb{Q})\backslash \mathcal{S}_B,$ and $ d\tilde{\mu}(\mathcal{X})$ is a normalized $O(\mathbb{A}_Q)$ invariant measure such that 
\[
\int_{O_A(\mathbb{Q})\backslash \mathcal{L}_{A,B}} d\tilde{\mu}(\mathcal{X})=1,
\]
and
\begin{equation}\label{R(B)}
 \mathcal{R}(B)=\int_{O_A(\mathbb{Q})\backslash \mathcal{L}_{A,B}} \sum_{\mathcal{Q}\in O_A(\mathbb{Q})\backslash \mathcal{S}_B} \dfrac{\mathcal{K}_{r,B}(\mathcal{X},\mathcal{Q})}{|O_{\mathcal{Q}}(\mathbb{Q})|} d\tilde{\mu}(\mathcal{X}).
\end{equation}
%
% $E_N(F)=\prod_{p}\sigma_F(p,N)$ is the product of the local Hardy-Littlewood  densities. By the Siegel Mass formula we have 
%
\begin{prop}\label{lememl}
We have 
\begin{equation}
\begin{split}
\int_{O_A(\mathbb{Q})\backslash \mathcal{L}_{A,B}} \mathcal{K}_{r,B}(\mathcal{X},\mathcal{Y}) d\tilde{\mu}(\mathcal{X})&=\int_{O_A(\mathbb{Q})\backslash \mathcal{L}_{A,B}} \mathcal{K}_{r,B}(\mathcal{X},\mathcal{Y}) d\tilde{\mu}(\mathcal{Y})=1,
\\
 \mathcal{R}(B)&=R(B),
 \\
 \mathcal{VAR}(B,r)&=\va(B,r).
\end{split}
\end{equation}
\end{prop}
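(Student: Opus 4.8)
The plan is essentially bookkeeping: to unfold the adelic double coset $O_A(\mathbb{Q})\backslash \mathcal{L}_{A,B}$ into the classical genus decomposition and then match the result term by term against Lemma~\ref{easylem} and Lemma~\ref{lem22}. First I would record the geometry of the quotient. Since $A$ is positive definite, the archimedean stabilizer $O_{A,\vec{E}}(\mathbb{R})$ is itself a definite orthogonal group, so $O_{A,\mathcal{E}_0}(\mathbb{A}_{\mathbb{Q}})=O_{A,\vec{E}}(\mathbb{R})\,O_A(\prod_p\mathbb{Z}_p)$ is compact; combining the identification $\mathcal{L}_{A,B}=O_A(\mathbb{A}_{\mathbb{Q}})/O_{A,\mathcal{E}_0}(\mathbb{A}_{\mathbb{Q}})$ with the finite class decomposition $O_A(\mathbb{A}_f)=\bigsqcup_{i=1}^{h} O_A(\mathbb{Q})\,\xi_i\,O_A(\prod_p\mathbb{Z}_p)$, where $\xi_i$ represents the class of $L_i$, yields
\[
O_A(\mathbb{Q})\backslash \mathcal{L}_{A,B}=\bigsqcup_{i=1}^{h}\Gamma_i\backslash V_{A,B}(\mathbb{R}),\qquad \Gamma_i:=O_A(\mathbb{Q})\cap \xi_i\,O_A(\prod_p\mathbb{Z}_p)\,\xi_i^{-1}.
\]
Under the real isomorphism $(L_i,A)\cong(\mathbb{Z}^m,A_i)$ this becomes $\bigsqcup_{i=1}^h O_{A_i}(\mathbb{Z})\backslash V_{A_i,B}(\mathbb{R})$ with $\Gamma_i\cong O_{A_i}(\mathbb{Z})$, the finite automorphism group of $L_i$. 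The same picture identifies $O_A(\mathbb{Q})\backslash \mathcal{S}_B$ with $\bigsqcup_{i=1}^h O_{A_i}(\mathbb{Z})\backslash V_{A_i,B}(\mathbb{Z})$, and the stabilizer $|O_{\mathcal{Q}}(\mathbb{Q})|$ of a representative lying in the $i$-th block with $|O_{A_i,\vec{Y}}(\mathbb{Z})|$.

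Next I would pin down two further identifications. The normalized $O_A(\mathbb{A}_{\mathbb{Q}})$-invariant measure $\tilde{\mu}$ restricts on the $i$-th block to $w_i$ times the $O_{A_i}(\mathbb{R})$-invariant measure $\mu_i$ descended to $O_{A_i}(\mathbb{Z})\backslash V_{A_i,B}(\mathbb{R})$, where $w_i=|O_{A_i}(\mathbb{Z})|^{-1}\big/\sum_{j}|O_{A_j}(\mathbb{Z})|^{-1}$ is the Siegel mass weight; this is exactly the bookkeeping behind the mass formula $\vol\big(O_A(\mathbb{Q})\backslash O_A(\mathbb{A}_{\mathbb{Q}})\big)=\vol(O_A(\mathbb{R}))\,\vol\big(O_A(\prod_p\mathbb{Z}_p)\big)\sum_{j}|O_{A_j}(\mathbb{Z})|^{-1}$, and the $w_i$ are precisely the weights appearing in \eqref{Siegelvarfor} and \eqref{averep}. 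Moreover, by \eqref{eqdef} the kernel $\mathcal{K}_{r,B}$ is block-diagonal: for $\mathcal{X},\mathcal{Y}$ lying in the $i$-th block it equals $\sum_{\gamma\in O_{A_i}(\mathbb{Z})}K_{r,B}(\gamma\vec{X},\vec{Y})=\tilde{K}_r(\vec{X},\vec{Y})$, the periodized kernel of Lemma~\ref{easylem}, while it vanishes on pairs coming from distinct blocks.

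With these identifications in hand the three assertions follow by a formal unfolding. For the first, I would restrict to the block containing $\mathcal{Y}$, use that $\tilde{K}_r$ is the $O_{A_i}(\mathbb{Z})$-periodization of $K_{r,B}$, and invoke the normalization \eqref{normalization} to collapse the integral; the symmetry in $\mathcal{X}\leftrightarrow\mathcal{Y}$ is the symmetry of $K_{r,B}$. For $\mathcal{R}(B)=R(B)$, I would integrate the sum defining $\mathcal{R}(B)$ term by term using the first identity, apply the orbit--stabilizer relation $\sum_{\vec{Y}\in O_{A_i}(\mathbb{Z})\backslash V_{A_i,B}(\mathbb{Z})}|O_{A_i,\vec{Y}}(\mathbb{Z})|^{-1}=R_{A_i}(B)/|O_{A_i}(\mathbb{Z})|$, and sum over the blocks with the weights $w_i$ to recover \eqref{averep}. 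Finally, for $\mathcal{VAR}(B,r)=\va(B,r)$ I would expand the square over the block decomposition: block-diagonality makes the integrand on the $i$-th block coincide with that of Lemma~\ref{lem22} (now that $\mathcal{R}(B)=R(B)$), and the weights $w_i$ turn the block sum into \eqref{Siegelvarfor}, so the statement reduces to Lemma~\ref{lem22}. The one step I expect to require genuine care --- indeed the only real content of the proposition --- is the measure and counting bookkeeping above: checking that the adelic invariant measure decomposes with exactly the Siegel mass weights $|O_{A_i}(\mathbb{Z})|^{-1}$ and that the sums over $O_A(\mathbb{Q})$-orbits degenerate into sums over $O_{A_i}(\mathbb{Z})$-orbits with the correct stabilizer factors; everything else is routine.
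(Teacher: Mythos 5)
Your proposal is correct and matches the paper's argument in essence: the paper likewise decomposes $O_A(\mathbb{Q})\backslash\mathcal{L}_{A,B}=\bigcup_i\{(L_i,\vec{X}):\vec{X}\in O_{L_i}\backslash V_{A,B}(\mathbb{R})\}$, derives the Siegel mass weights $|O_{L_i}|^{-1}/\sum_j|O_{L_j}|^{-1}$ for the restricted measure from transitivity of the adelic action and the Haar property, notes via \eqref{eqdef} that $\mathcal{K}_{r,B}$ is block-diagonal and periodizes $K_{r,B}$, and then unfolds each of the three identities using the normalization \eqref{normalization}, orbit--stabilizer counting, and Lemma~\ref{lem22}, exactly as you outline.
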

%
%
%
%
%\begin{lem}
%We have 
%\[
%\va(N,\epsilon)=\sum_{i=1}^{h} \frac{1}{O_{L_i}} \int_{V_1} \Big(\sum_{\vec{y}\in L_i(N)} K_{r,B}(\vec{x},\vec{y})- E_N(F) \Big)^2 d\mu_i(\vec{x}),
%\]
%where $L_i(N):=\{\vec{y}\in V_1: \sqrt{N}\vec{y}\in L_i\}.$
%\end{lem}
\begin{proof} We have  $O_A(\mathbb{Q})\backslash \mathcal{L}_{A,B}=\cup_{i=1}^{h} \{(L_i,\vec{X}): \vec{X}\in O_{L_i}\backslash V_{A,B}(\mathbb{R})\},$ where $O_{L_i}$ is the stabilizer of $L_i$ by the action of $O_A(\mathbb{Q}).$  Since the action of $O_A(\mathbb{A}_{\mathbb{Q}})$ is transitive on $ \mathcal{L}_{A,B}$ and $ d\mu$ is a Haar measure, it follows that 
\[
\tilde{\mu} \Big( \{(L_i,\vec{X}): \vec{X}\in  V_{A,B}(\mathbb{R})\} \Big)= \tilde{\mu} \Big( \{(L_j,\vec{X}): \vec{X}\in  V_{A,B}(\mathbb{R})\} \Big)
\]
for every $1\leq i,j \leq h.$
  Since \(\int_{O_A(\mathbb{Q})\backslash \mathcal{L}_{A,B}} d\tilde{\mu}(\mathcal{X})=1\), we have 
  \[
  \tilde{\mu}\Big(\{(L_i,\vec{X}): \vec{X}\in O_{L_i}\backslash V_{A,B}(\mathbb{R})\}  \Big)=\frac{\frac{1}{|O_{L_i}|}}{\Big(\sum_{L_i} \frac{1}{|O_{L_i}|}\Big) }.
  \]
  Recall that $\int_{V_{A_i,B}(\mathbb{R})} d\mu_i(\vec{Y})=1$ and $L_i^{\intercal}AL_i=A_i.$
This implies $ \Big(\sum_{L_i} \frac{1}{O_{L_i}}\Big)d\tilde{\mu}$ restricted to $\{(L_i,\vec{X}): \vec{X}\in O_{L_i}\backslash V_{A,B}(\mathbb{R})\} $  is equal to $d\mu$ on  $O_{L_i} \backslash V_{A,B}(\mathbb{R}).$
% as:
%\[
% d\mu(\mathcal{X})=\dfrac{\sum_{i=1}^{h} d\mu_i}{\sum_{A_i} \frac{1}{|O_{A_i}(\mathbb{Z})|}}.
%\]
%
Let $\mathcal{Y}=(L,Y).$ By \eqref{eqdef}, we have  
\begin{equation*}
\begin{split}
\int_{O_A(\mathbb{Q})\backslash \mathcal{L}_{A,B}} \mathcal{K}_{r,B}(\mathcal{X},\mathcal{Y}) d\tilde{\mu}(\mathcal{X})&= \int_{O_{L}\backslash V_{A,B}(\mathbb{R})}  \sum_{\gamma \in O_{L}}K_{r,B}(\gamma\vec{X},\vec{Y})d\mu(\vec{X})
\\
& =\int_{ V_{A,B}(\mathbb{R})}  K_{r,B}(\vec{X},\vec{Y})d\mu(\vec{X})=1.
\end{split}
\end{equation*}
This completes the proof of the first identity. For the second identity, we have 
\begin{equation*}
\begin{split}
\Big(\sum_{A_i} \frac{1}{O_{L_i}}\Big) \mathcal{R}(B)= \sum_{i=1}^{h} \int_{O_{L_i}\backslash V_{A,B}(\mathbb{R})} \sum_{\mathcal{Q}\in O_A(\mathbb{Q})\backslash  \mathcal{S}_{L_i,B}} \dfrac{\mathcal{K}_{r,B}((L_i,\vec{X}),\mathcal{Q})}{|O_{\mathcal{Q}}(\mathbb{Q})|} d\mu(\vec{X}).
\end{split}
\end{equation*}
By unfolding $\sum_{\mathcal{Q}\in O_A(\mathbb{Q})\backslash  \mathcal{S}_{L_i,B}} \dfrac{\mathcal{K}_{r,B}((L_i,\vec{X}),\mathcal{Q})}{|O_{\mathcal{Q}}(\mathbb{Q})|}$, we have 
\begin{equation*}
\begin{split}
\Big(\sum_{A_i} \frac{1}{O_{L_i}}\Big) \mathcal{R}(B)&= \sum_{i=1}^{h} \frac{1}{|O_{A_i}(\mathbb{Z})|} \int_{ V_{A_i,B}(\mathbb{R})} \sum_{Q\in V_{A_i,B}(\mathbb{Z})} {K_{r,B}(\vec{X},Q)} d\mu_i(\vec{X})
\\
&= \sum_{i=1}^{h} \frac{1}{|O_{A_i}(\mathbb{Z})|}R_{A_i}(B)=\Big(\sum_{A_i} \frac{1}{|O_{A_i}(\mathbb{Z})|}\Big) R(B).
\end{split}
\end{equation*}
Finally,  by Lemma~\ref{lem22}, we have
\begin{equation*}
\begin{split}
\Big(\sum_{A_i} \frac{1}{|O_{A_i}(\mathbb{Z})|}\Big) \mathcal{VAR}(B,r)&= \sum_{i=1}^{h} \int_{O_{L_i}\backslash V_{A,B}(\mathbb{R})} \Big( \big(\sum_{\mathcal{Q}\in O_A(\mathbb{Q})\backslash  \mathcal{S}_{L_i,B}} \dfrac{\mathcal{K}_{r,B}(\mathcal{X},\mathcal{Q})}{|O_{\mathcal{Q}}(\mathbb{Q})|}\big)- R(B) \Big)^2 d\tilde{\mu}(\mathcal{X})
\\
&=\sum_{A_i} \int_{O_{A_i}(\mathbb{Z})\backslash V_{A_i,B}(\mathbb{R})} \Big(\big( \sum_{Y\in O_{A_i}(\mathbb{Z})\backslash  V_{A_i,B}(\mathbb{Z}) } \frac{\tilde{K}_{r}(\vec{X},\vec{Y})}{|O_{L_i,\vec{Y}(\mathbb{Z})}|}\big)- R(B)  \Big)^2  d
\mu(\vec{X})
\\
&=\Big(\sum_{A_i} \frac{1}{|O_{A_i}(\mathbb{Z})|}\Big)\va(B,r).
\end{split}
\end{equation*}
This completes the proof of the lemma.
\end{proof}
\subsection{Siegel variance in terms of the Weyl sums}\label{spec}
In this section, we write  the spectral decomposition of  $\mathcal{K}_{r,B}.$ Let $\{ \phi_{\pi,j}(\alpha)\}$ be an orthonormal basis of $L^2\Big(O_A(\mathbb{Q})\backslash O_A(\mathbb{A}_{\mathbb{Q}})/ O_{A,\vec{E}}(\mathbb{R})O_A(\prod_p \mathbb{Z}_p)\Big),$ where $\pi$ is an automorphic representation and $ \phi_{\pi,j}$ is an  $O_{A,\vec{E}}(\mathbb{R})O_A(\prod_p \mathbb{Z}_p)$ invariant vectors in $\pi.$  
%\[
%L^2\Big(O_A(\mathbb{Q})\backslash \mathcal{L}_{A,B} \Big)= \bigoplus_{\pi} \bigoplus_{i=1}^{d_\pi} \phi_{\pi,j},
%\]
%where $ \bigoplus_{i=1}^{d_\pi}$ is the sum over an orthonormal basis  of $O_{A,\vec{E}}(\mathbb{R})O_A(\prod_p \mathbb{Z}_p)$ invariant vectors of $\pi$. 
%By assuming $F$ is unimodular, it follows that there is a unique spherical vector in $\pi$ and $d_\pi=1.$ 
We write $ \pi=\pi_{\infty}\prod_p \pi_p,$ where $\pi_p$ and $\pi_{\infty}$ are the local components of the automorphic representation $\pi.$ We identify $ O_A(\mathbb{Q})\backslash O_A(\mathbb{A}_{\mathbb{Q}})/O_{A,\vec{E}}(\mathbb{R})O_A(\prod_p \mathbb{Z}_p)=O_A(\mathbb{Q})\backslash \mathcal{L}_{A,B}.$ By Lemma~\ref{lememl}, we have 
$$
\mathcal{K}_{r,B}(\alpha,\beta)=1+\sum_{\pi} \sum_{j=1}^{d_{\pi}} h_{r}(\pi_{\infty}) \phi_{\pi,j}(\alpha)\bar{\phi}_{\pi,i}(\beta),
$$
where  the sum is over $ \phi_{\pi,j}$ such that
\[
\int_{O_A(\mathbb{Q})\backslash \mathcal{L}_{A,B}} \phi_{\pi,j}(\alpha)  d\tilde{\mu}(\alpha)=0,
\]
and $ h_{r}(\pi_{\infty})$ is the spherical transformation of the point-pair invariant kernel $K_{r,B},$ which is defined by:  
\begin{equation}\label{sphtrans}
h_{r}(\pi_{\infty})\phi_{\pi,j}(\alpha)=\int_{O_A(\mathbb{Q})\backslash \mathcal{L}_{A,B}} \mathcal{K}_{r,B}(\alpha,\beta)\phi_{\pi,j}(\beta) d\tilde{\mu}(\beta).
 \end{equation}
 By Lemma~\ref{lememl}, we obtain 
\[
 \mathcal{VAR}(B,r)=\int_{O_A(\mathbb{Q})\backslash \mathcal{L}_{A,B}}\Big(\sum_{\pi} \sum_{j=1}^{d_{\pi}} h_{r}(\pi_{\infty}) \phi_{\pi,j}(\alpha) W(\phi_{\pi,j},B)  \Big)^2 d\tilde{\mu}(\alpha),
\]
where 
 \begin{equation}\label{Wweyl}
W(\phi_{\pi,j},B):=\sum_{\mathcal{Q}\in O_A(\mathbb{Q})\backslash \mathcal{S}_B} \dfrac{ \phi_{\pi,j}(\mathcal{Q})}{|O_{\mathcal{Q}}(\mathbb{Q})|},
\end{equation}
which is a generalization of Weyl's sum associated to $\phi_{\pi,j}.$ By using the orthogonality of $\phi_{\pi,j}$, only the diagonal terms contribute to $ \mathcal{VAR}(B,r)$, and  we have the following proposition. 
\begin{prop}\label{Siegelvarp} We have 
\begin{equation}\label{Siegelvar}
 \mathcal{VAR}(B,r)=\sum_{\pi} \sum_{j=1}^{d_{\pi}} |h_{r}(\pi_{\infty})|^2 |W(\phi_{\pi,j},B)|^2.
\end{equation}
\end{prop}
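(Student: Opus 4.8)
The plan is to read off the identity directly from the $L^{2}$ spectral decomposition of the automorphic kernel $\mathcal{K}_{r,B}$, treating $\mathcal{VAR}(B,r)$ as a squared norm and applying Parseval's identity. Concretely, set
\[
\Phi(\alpha):=\sum_{\mathcal{Q}\in O_A(\mathbb{Q})\backslash\mathcal{S}_B}\frac{\mathcal{K}_{r,B}(\alpha,\mathcal{Q})}{|O_{\mathcal{Q}}(\mathbb{Q})|},
\]
a \emph{finite} sum (the set $O_A(\mathbb{Q})\backslash\mathcal{S}_B$ indexes integral representations of $B$, hence is finite) of the smooth, compactly supported functions $\alpha\mapsto\mathcal{K}_{r,B}(\alpha,\mathcal{Q})$ on the compact space $O_A(\mathbb{Q})\backslash\mathcal{L}_{A,B}=\bigcup_i\{(L_i,\vec{X}):\vec{X}\in O_{L_i}\backslash V_{A,B}(\mathbb{R})\}$, so $\Phi\in L^{2}$. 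First I would substitute the spectral expansion of $\mathcal{K}_{r,B}$ stated above \eqref{sphtrans} into $\Phi$, use that $\sum_{\mathcal{Q}}\bar{\phi}_{\pi,j}(\mathcal{Q})/|O_{\mathcal{Q}}(\mathbb{Q})|=\overline{W(\phi_{\pi,j},B)}$, and use the identities $\mathcal{R}(B)=R(B)=\sum_{\mathcal{Q}}1/|O_{\mathcal{Q}}(\mathbb{Q})|$ from Proposition~\ref{lememl} (the last equality follows from $\int\mathcal{K}_{r,B}(\mathcal{X},\mathcal{Y})\,d\tilde\mu(\mathcal{X})=1$) to get the $L^{2}$-convergent expansion
\[
\Phi(\alpha)-\mathcal{R}(B)=\sum_{\pi}\sum_{j=1}^{d_\pi}h_{r}(\pi_{\infty})\,\overline{W(\phi_{\pi,j},B)}\,\phi_{\pi,j}(\alpha),
\]
where the constant ($\pi$ trivial, $\phi\equiv 1$) term of $\mathcal{K}_{r,B}$ has cancelled against $\mathcal{R}(B)$. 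This is exactly the intermediate display preceding the statement, so $\Phi-\mathcal{R}(B)$ is the orthogonal projection of $\Phi$ onto the orthogonal complement of the constants.

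Then I would simply expand $\mathcal{VAR}(B,r)=\int_{O_A(\mathbb{Q})\backslash\mathcal{L}_{A,B}}\bigl|\Phi(\alpha)-\mathcal{R}(B)\bigr|^{2}\,d\tilde\mu(\alpha)$ into a double sum over $(\pi,j)$ and $(\pi',j')$ of
$h_{r}(\pi_{\infty})\overline{h_{r}(\pi'_{\infty})}\,\overline{W(\phi_{\pi,j},B)}\,W(\phi_{\pi',j'},B)\int\phi_{\pi,j}\overline{\phi_{\pi',j'}}\,d\tilde\mu$, and invoke orthonormality of the system $\{\phi_{\pi,j}\}$ to collapse the integral to $\delta_{(\pi,j),(\pi',j')}$; this yields
\[
\mathcal{VAR}(B,r)=\sum_{\pi}\sum_{j=1}^{d_\pi}|h_{r}(\pi_{\infty})|^{2}\,|W(\phi_{\pi,j},B)|^{2},
\]
which is the assertion. (The $\phi_{\pi,j}$ are real-valued harmonic polynomials and $K_{r,B}$ is a real symmetric point-pair kernel, so $h_{r}(\pi_{\infty})$ and $W(\phi_{\pi,j},B)$ are in fact real; keeping $|\cdot|^{2}$ makes the statement independent of such conventions.)

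The hard part will be the single genuinely analytic point: the legitimacy of inserting the spectral decomposition of $\mathcal{K}_{r,B}$ inside the finite sum over $\mathcal{Q}$ and then inside the square and the integral. I expect this to be a mild obstacle rather than a serious one, because $K_{r,B}$ is smooth with compact support, so each $\mathcal{K}_{r,B}(\cdot,\mathcal{Q})$ is smooth on the compact manifold $O_A(\mathbb{Q})\backslash\mathcal{L}_{A,B}$; elliptic regularity forces its spectral coefficients to decay faster than any power of the Laplace eigenvalue, so the expansion converges absolutely and uniformly and all interchanges are justified. One should also note here that $h_{r}(\pi_{\infty})$ appearing above is precisely the spherical transform defined in \eqref{sphtrans}, which is what makes the scalar $h_{r}(\pi_{\infty})$ (rather than an operator) come out of the point-pair invariance. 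Once this is recorded, the remainder is the routine Parseval computation sketched above.
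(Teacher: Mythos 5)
Your proof is correct and follows exactly the paper's route: insert the spectral expansion of $\mathcal{K}_{r,B}$ (the display just above \eqref{sphtrans}) into the definition of $\mathcal{VAR}(B,r)$, cancel the constant mode against $\mathcal{R}(B)$, and apply orthonormality of $\{\phi_{\pi,j}\}$. The paper's own proof is precisely the two displays immediately preceding the proposition statement; you have only added the explicit justifications (finiteness of $O_A(\mathbb{Q})\backslash\mathcal{S}_B$, rapid decay of the spectral coefficients of the smooth, compactly supported kernel on the compact quotient) that the paper leaves implicit.
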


\section{Harmonic polynomials}\label{sphesec}

\subsection{Harmonic polynomials for n=1}
In this section, we restrict ourself to the case $n=1$ and  cite some standard results on the spherical harmonic polynomials.  Let $F(\vec{x}):= \vec{x}^{\intercal} A\vec{x},$ where 
$\vec{x}=\begin{bmatrix}x_1\\ \vdots \\x_m \end{bmatrix}.$ Let  $A^{-1}=[a^{ij}]$ denote the inverse of $A,$ and 
\(
\Delta_A:=\sum_{i,j} a^{ij} \frac{\partial^2}{\partial x_i \partial x_j}\) be the Laplacian operator associated to $A.$
  Let $H_k$ be the space of harmonic polynomials of degree $k$  with respect to the symmetric matrix $A$, which is
  \[
  H_k:=\left\{p(\vec{x}):  \Delta_A p(\vec{x})=0, \text{ and }\deg{p}=k   \right\}.
  \]
  Let  $\vec{r}\in \mathbb{C}^m$ and $ \vec{r}^{\intercal}A \vec{r}=0.$   It is easy to check that
  \(
\Delta_A \langle \vec{x}, \vec{r} \rangle^k=0, 
\)  for $k=1$  the condition $ \vec{r}^{\intercal}A \vec{r}=0$  is not necessary.   It is well-known that $H_k$ is the span of polynomials of the form \(
\langle \vec{x}, \vec{r} \rangle^k.
\)  Moreover $H_k$ is invariant under the action of  $O_A(\mathbb{C})$ and form an irreducible representation of this group.

Let $V_N(\mathbb{R})=\{\vec{x}\in \mathbb{R}^m: F(\vec{x})=N  \}$ and   $f\in H_k$. Then the restriction of $f $ to $V_N(\mathbb{R})\subset \mathbb{R}^m$ defines an embedding of $H_k$ into $L^2(V_N(\mathbb{R})).$ Next we give the spectral decomposition of $L^2(V_N(\mathbb{R}))$ in terms of the harmonic polynomials.
\begin{prop}\label{ppp}
We have 
$$L^2(V_N(\mathbb{R}))=\oplus_{k} H_k $$
\end{prop}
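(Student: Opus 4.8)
The content of Proposition~\ref{ppp} is the classical decomposition of $L^2$ of a sphere into spaces of spherical harmonics; the only point is to transport it to the quadric $V_N(\mathbb{R})$ attached to the general positive definite form $F$. The plan is therefore as follows.

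First I would fix an invertible real matrix $S$ with $S^{\intercal}S=A$ (for instance $S=A^{1/2}$) and consider the linear isomorphism $\sigma:\mathbb{R}^m\to\mathbb{R}^m$, $\sigma(\vec{x})=S\vec{x}$. Since $F(\vec{x})=(S\vec{x})^{\intercal}(S\vec{x})=|S\vec{x}|^2$, the map $\sigma$ carries $V_N(\mathbb{R})$ diffeomorphically onto the Euclidean sphere $\sqrt{N}\,S^{m-1}$, and it carries the unique $O_A(\mathbb{R})$-invariant probability measure on $V_N(\mathbb{R})$ to the $O(m)$-invariant probability measure on the sphere. A direct chain-rule computation shows that pullback along $\sigma^{-1}$ intertwines $\Delta_A$ with the standard Laplacian $\Delta$: if $g=f\circ\sigma^{-1}$ then $\Delta g=(\Delta_A f)\circ\sigma^{-1}$, because $S^{-1}(S^{-1})^{\intercal}=(S^{\intercal}S)^{-1}=A^{-1}=[a^{ij}]$. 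As $\sigma^{-1}$ is linear it preserves the degree of a polynomial, so $f\mapsto f\circ\sigma^{-1}$ is a linear isomorphism from $H_k$ onto the space $\mathcal{H}_k$ of ordinary degree-$k$ harmonic polynomials on $\mathbb{R}^m$, compatibly with restriction to the respective quadrics. Hence it suffices to prove the statement for $A=I$ and $N=1$, i.e.\ $L^2(S^{m-1})=\widehat{\bigoplus}_k\mathcal{H}_k$.

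For the sphere I would recall the three standard ingredients. (i) The Fischer decomposition $\mathbb{C}[x_1,\dots,x_m]=\bigoplus_{j\ge 0}|\vec{x}|^{2j}\,\mathcal{H}$, where $\mathcal{H}=\bigoplus_k\mathcal{H}_k$ is the space of all harmonic polynomials; restricting to $S^{m-1}$ and using $|\vec{x}|^2=1$ there, this shows that the restrictions of polynomials, hence (by Stone--Weierstrass) a dense subspace of $L^2(S^{m-1})$, lie in the linear span of $\bigcup_k\mathcal{H}_k|_{S^{m-1}}$; the same decomposition shows $\mathcal{H}_k\to L^2(S^{m-1})$ is injective. (ii) For $p\in\mathcal{H}_k$ the restriction $p|_{S^{m-1}}$ is an eigenfunction of the spherical Laplace--Beltrami operator with eigenvalue $-k(k+m-2)$ (write $\Delta$ in polar coordinates), these eigenvalues are pairwise distinct, so by self-adjointness of the Laplace--Beltrami operator the subspaces $\mathcal{H}_k|_{S^{m-1}}$ are mutually orthogonal; alternatively each $\mathcal{H}_k$ is an irreducible $O(m)$-module (as recalled in the excerpt) and the $\mathcal{H}_k$ are pairwise non-isomorphic since they have distinct dimensions, so orthogonality in the unitary representation $L^2(S^{m-1})$ follows from Schur's lemma. (iii) Combining density and orthogonality gives $L^2(S^{m-1})=\widehat{\bigoplus}_k\mathcal{H}_k$, and transporting back through $\sigma$ yields Proposition~\ref{ppp}.

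There is no serious obstacle here: the proposition is essentially a repackaging of classical material, and the part that requires a small verification rather than a citation is the identity $\Delta g=(\Delta_A f)\circ\sigma^{-1}$, which is exactly what makes ``$\Delta_A$-harmonic of degree $k$'' correspond to ``$\Delta$-harmonic of degree $k$'' under the change of variables; this is a one-line chain-rule computation using $(S^{\intercal}S)^{-1}=A^{-1}$.
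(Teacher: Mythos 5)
The paper disposes of this proposition with a single citation to Vergne (Section 2.5.12), whereas you supply a self-contained proof; your proof is correct. The route you take --- conjugate $\Delta_A$ to the Euclidean Laplacian by the linear substitution $\sigma(\vec{x})=S\vec{x}$ with $S^{\intercal}S=A$, then invoke the classical Fischer decomposition together with the orthogonality of spherical harmonics on the round sphere --- is the standard one, and the chain-rule identity $\Delta(f\circ\sigma^{-1})=(\Delta_A f)\circ\sigma^{-1}$, which rests on $S^{-1}(S^{-1})^{\intercal}=A^{-1}=[a^{ij}]$, is exactly right; the substitution is degree-preserving, so $H_k$ is carried isomorphically onto the ordinary degree-$k$ harmonics. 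For comparison, when the paper proves the general-$n$ analogue of this statement a little later it does not first pass to $A=I$: it uses directly the decomposition of all polynomials on $M_{m\times n}$ into harmonics times $O_A(\mathbb{C})$-invariants and the observation that the invariants restrict to constants on the representation variety. That is the same Fischer-type identity you rely on, just phrased for the form $A$ without the auxiliary change of variables. Your version has the small additional merit of making the $L^2$-orthogonality of the summands $H_k$ explicit (either via the distinct Laplace--Beltrami eigenvalues $-k(k+m-2)$, or via Schur's lemma applied to the pairwise non-isomorphic irreducible $O(m)$-modules), a point the paper's one-line argument leaves implicit.
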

\begin{proof}
This proposition is standard; see~\cite[Section 2.5.12]{Vergne} for the proof. 
\end{proof}

Fix $\vec{e}\in V_N(\mathbb{R})$ and let  $O_{A,\vec{e}} \subset O_A(\mathbb{R})$ be the centralizer of $\vec{e}$. It follows that  there exists a unique  $p_{k,\vec{e}}(\vec{x})\in H_k,$ such that 
\begin{equation}
\begin{cases}
p_{k,\vec{e}}(\vec{x})= p_{k,\vec{e}}(g\vec{x}), \text{ for every } g\in O_{A,\vec{e}} \text{ and } \vec{x}\in V_N(\mathbb{R}),
\\
p_{k,\vec{e}}(\vec{e})=1.
\end{cases}  \end{equation}
The following mean value theorem is standard  for the harmonic polynomials. 
 \begin{lem}\label{spherical}
Let $p_{k,\vec{e}}(\vec{x})$ be as above and $q(\vec{x})\in H_k.$ We have 
\[
\begin{split}
\int_{O_{A,\vec{E}}}  q(g\vec{x}) d\mu(g)&= q(\vec{e}) p_{k,\vec{e}}(\vec{x}),
\\
p_{k,\vec{e}}(u\vec{e})&=p_{k,\vec{e}}(u^{-1}\vec{e}), \text{ where }u\in O_{F}(\mathbb{R}),
\\
\int_{V_N(\mathbb{R})} q(\vec{x}) \overline{ p_{k,\vec{e}}}(\vec{x}) d\mu(\vec{x})&= q(\vec{e})|p_{k,\vec{e}}|^2, \text{ where } |p_{k,\vec{e}}|^2=  \int_{V_N(\mathbb{R})}  |p_{k,\vec{e}}(\vec{x})|^2 d\mu(\vec{x}).
\end{split}
\]
 \end{lem}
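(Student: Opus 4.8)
The plan is to deduce all three identities from the two defining properties of $p_{k,\vec{e}}$: its invariance under $O_{A,\vec{e}}$ and the normalization $p_{k,\vec{e}}(\vec{e})=1$, together with the fact (from Proposition~\ref{ppp} and the preceding remarks) that $H_k$ is an irreducible representation of $O_A(\mathbb{R})$ carrying the $O_A(\mathbb{R})$-invariant inner product inherited from $L^2(V_N(\mathbb{R}))$.

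First I would prove the averaging identity. Fix $q\in H_k$ and define $\Phi(\vec{x}):=\int_{O_{A,\vec{e}}}q(g\vec{x})\,d\mu(g)$, using the normalized Haar measure on the compact group $O_{A,\vec{e}}$. Since the Laplacian $\Delta_A$ commutes with the $O_A(\mathbb{R})$-action and $q$ has degree $k$, each $q(g\cdot)$ lies in $H_k$, hence $\Phi\in H_k$ (the space is finite-dimensional and closed). By construction $\Phi$ is left-invariant under $O_{A,\vec{e}}$. Now $O_{A,\vec{e}}$ acts transitively on $V_N(\mathbb{R})$ only through the ambient group, but the key point is that the space of $O_{A,\vec{e}}$-invariant vectors in $H_k$ is one-dimensional — this is the standard multiplicity-one statement for spherical functions on the rank-one symmetric space $V_N(\mathbb{R})=O_A(\mathbb{R})/O_{A,\vec{e}}$, which is exactly why $p_{k,\vec{e}}$ is unique. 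Therefore $\Phi=c\,p_{k,\vec{e}}$ for a scalar $c$, and evaluating at $\vec{x}=\vec{e}$ gives $c=\Phi(\vec{e})=\int_{O_{A,\vec{e}}}q(g\vec{e})\,d\mu(g)=q(\vec{e})$, since $g\vec{e}=\vec{e}$ for $g\in O_{A,\vec{e}}$. This yields the first identity.

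Second, for the symmetry $p_{k,\vec{e}}(u\vec{e})=p_{k,\vec{e}}(u^{-1}\vec{e})$ I would use that the spherical function is a matrix coefficient: writing $\rho$ for the representation of $O_A(\mathbb{R})$ on $H_k$ and $v_0$ for a unit $O_{A,\vec{e}}$-fixed vector, one has $p_{k,\vec{e}}(u\vec{e})=\langle \rho(u)v_0, v_0\rangle / \langle v_0, v_0\rangle$ after identifying $p_{k,\vec{e}}$ with such a normalized matrix coefficient (this identification follows from the first identity applied to $q=p_{k,\vec{e}}$ itself and the reproducing property). Since $\rho$ is a real representation of a group with the property that every element is conjugate to its inverse within the relevant structure — more robustly, since $H_k$ consists of real polynomials and the inner product is real and $O_A(\mathbb{R})$-invariant, $\rho(u)$ is orthogonal, so $\langle\rho(u)v_0,v_0\rangle=\langle v_0,\rho(u)^{-1}v_0\rangle=\langle v_0,\rho(u^{-1})v_0\rangle=\langle\rho(u^{-1})v_0,v_0\rangle$, using that the matrix coefficient of an orthogonal representation with respect to a single vector is symmetric in $u\leftrightarrow u^{-1}$. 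This gives the second identity.

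Finally, for the reproducing identity, I would expand: by $O_A(\mathbb{R})$-invariance of $d\mu$ on $V_N(\mathbb{R})$ and of the pairing, and using that $V_N(\mathbb{R})=O_A(\mathbb{R})/O_{A,\vec{e}}$ with $\vec{x}=h\vec{e}$, one writes $\int_{V_N(\mathbb{R})}q(\vec{x})\overline{p_{k,\vec{e}}(\vec{x})}\,d\mu(\vec{x})=\int_{O_A(\mathbb{R})}q(h\vec{e})\overline{p_{k,\vec{e}}(h\vec{e})}\,dh$. Insert $\overline{p_{k,\vec{e}}(h\vec{e})}=\int_{O_{A,\vec{e}}}\overline{p_{k,\vec{e}}(gh\vec{e})}\,d\mu(g)$ (valid since $p_{k,\vec{e}}$ is $O_{A,\vec{e}}$-invariant on the left as well, by the second identity), then apply the first identity with $\vec{x}\mapsto h\vec{e}$ to recognize the inner integral, or alternatively apply the first identity directly to the function $\vec{x}\mapsto q(\vec{x})$ to reduce $q$ to $q(\vec{e})p_{k,\vec{e}}$ against $p_{k,\vec{e}}$; either route gives $q(\vec{e})\int_{V_N(\mathbb{R})}|p_{k,\vec{e}}(\vec{x})|^2\,d\mu(\vec{x})=q(\vec{e})|p_{k,\vec{e}}|^2$. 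I expect the main obstacle to be pinning down the one-dimensionality of the space of $O_{A,\vec{e}}$-fixed vectors in $H_k$ and the matrix-coefficient description cleanly enough that the $u\leftrightarrow u^{-1}$ symmetry is transparent; everything else is a formal manipulation with invariant measures, and I would simply cite the relevant section of \cite{Vergne} for the Gelfand-pair / multiplicity-one input.
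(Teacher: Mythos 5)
Your overall strategy — deduce everything from the uniqueness of $p_{k,\vec{e}}$ as the $O_{A,\vec{e}}$-fixed vector in $H_k$ — is exactly what the paper intends (its proof is literally one sentence to that effect), and your proof of the first identity is correct and complete. However, as written the argument is circular: your proof of the second identity rests on the claim that $p_{k,\vec{e}}(u\vec{e})$ is the normalized matrix coefficient $\langle\rho(u)v_0,v_0\rangle/\|v_0\|^2$, and you explicitly say this identification ``follows from the first identity \ldots\ and the reproducing property'' — i.e.\ from the third identity, which you only prove afterward. The fix is merely to prove the third identity before the second, which is fine since the third does not actually depend on the second: the parenthetical ``by the second identity'' in your proof of the reproducing formula is a mislabeling — the equality $\overline{p_{k,\vec e}(gh\vec e)}=\overline{p_{k,\vec e}(h\vec e)}$ is just the \emph{defining} $O_{A,\vec e}$-invariance, not identity two. (Also note that this ``first route'' is a no-op: inserting a constant average and invoking identity one returns you to where you started; only your ``alternative'' route — average $q$ against $p_{k,\vec e}$ using the $O_A(\mathbb R)$-invariance of $d\mu$ and then apply identity one — actually closes the argument.)

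Separately, the representation-theoretic detour for the second identity is heavier than necessary. Since the $O_{A,\vec{e}}$-orbits on $V_N(\mathbb{R})$ are precisely the level sets of $\vec{x}^{\intercal}A\vec{e}$, the $O_{A,\vec{e}}$-invariance of $p_{k,\vec{e}}$ forces it to be a function of $\vec{x}^{\intercal}A\vec{e}$ alone (the zonal form). Then $u\in O_A(\mathbb{R})$ gives $(u\vec{e})^{\intercal}A\vec{e}=\vec{e}^{\intercal}(u^{\intercal}A)\vec{e}=\vec{e}^{\intercal}(Au^{-1})\vec{e}=(u^{-1}\vec{e})^{\intercal}A\vec{e}$, and the second identity follows at once — no inner product, unitarity, or reality of the representation needed. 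This is almost certainly what the paper means by ``easy consequence of uniqueness,'' and it is what the paper's own Lemma~\ref{invp} (the general-$n$ analogue) does via the involution $\sigma(q)(g\vec{E}_B)=q(g^{-1}\vec{E}_B)$. Your route is correct once reordered, but this direct one is cleaner.
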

 \begin{proof}
The proof is an easy consequence of the uniqueness of $p_{k,\vec{e}}(\vec{x}).$
 \end{proof}
% We prove that the spherical function is symmetric. 
%\begin{lem}\label{symmetric}
%We have 
%\[
%p_{\lambda,\vec{E}}(u\vec{e})=p_{\lambda,\vec{E}}(u^{-1}\vec{e}),
%\]
%
%\end{lem}
\subsection{Harmonic polynomials for general $n$}\label{harmgen}
In this section, we record  a generalization of the results of the previous section  from the work of Kashiwara and Vergne~\cite{Kashiwara}.  We give an orthonormal basis consisting of the generalized harmonic polynomials for $L^2(V_{A,B}(\mathbb{R})).$ We will use the result of this section later in Section~\ref{kernel} and~\ref{weight} and describe  the weight of the Siegel modular forms which appears in formula~\eqref{mainform}.

  We begin by defining some notations. Let $W:=\mathbb{R}^{n}$ and $W^{*}$ be its dual vector space. We take the symplectic space $V:=W+W^{*}$ with the symplectic form \(B(x_1+f_1,x_2+f_2)=f_2(x_1)-f_1(x_2).   \) Then $W$ and $W^{*}$ are complementary Lagrangian subspaces in $(V,B).$ Let $E:=(\mathbb{R}^m,A)$ be the inner product space with respect to  the  symmetric  form $A.$ Let $\mathcal{P}$ be the vector space of all complex valued polynomials on $Hom(W,E)=M_{m\times n}[\mathbb{R}],$ which is isomorphic to the space of complex valued polynomials on $Hom(W^{\mathbb{C}},E^{\mathbb{C}})=M_{m\times n}[\mathbb{C}].$ We denote by $O_A(\mathbb{C})$ the orthogonal group of $A$ with complex coefficients. The group $GL(n,\mathbb{C})\times O_A(\mathbb{C}) $ acts on $\mathcal{P}$ via \((H,\sigma)P=P(\sigma^{-1}XH)   \), where $H\in GL_n(\mathbb{C})$ and $\sigma\in O_A(\mathbb{C}).$
  
  For $X\in Hom(W^{\mathbb{C}},E^{\mathbb{C}})$ consider the symmetric matrix $X^{\intercal} A X.$ The coefficients \((X^{\intercal}AX)_{i,j}\) for $1\leq i,j\leq n$ generate the algebra of all $O_A(\mathbb{C})$ invariant polynomials on $Hom(W,E).$ Thus we can describe the algebra $\mathcal{D}_A$ of all $O_A(\mathbb{C})$-invariant constant coefficient differential operators on $Hom(W,E)$ as follows. We fix a basis of $W^{\mathbb{C}}$ and an orthogonal basis of $E^{\mathbb{C}}.$ Writing $X$ in $Hom(W^{\mathbb{C}},E^{\mathbb{C}})$ as
  \(
  X=[x_{i,j}]_{m\times n}.
  \) The algebra  $\mathcal{D}_A$ is generated by the operators:
  \[
  \Delta_{i,j}=\sum_{l=1}^{m}\frac{\partial}{\partial x_{li}}\frac{\partial }{\partial x_{lj}},
  \]
  for any $1 \leq i,j\leq n.$
We define the space of the harmonic polynomials by 
\[
\mathcal{H}:=\big\{P\in \mathcal{P}: \text{ such that } \Delta_{i,j}P=0 \text{ for all }1 \leq i,j \leq n     \big\}.
\]
 $\mathcal{H}$ is stable  under the action of $GL(n,\mathbb{C})\times O_A(\mathbb{C}).$ We write $\mathcal{H}=\oplus \mathcal{H}(\lambda)$ for the decomposition of $\mathcal{H}$ in isotopic components under $ O_A(\mathbb{C}).$ We cite the following theorem from~ \cite[Theorem 2.5.41]{Vergne} 
\begin{thm}\label{weightdis}
The isotopic component $\mathcal{H}(\lambda)$ of $\mathcal{H}$ of type $\lambda$ under $O_A(\mathbb{C})$ is irreducible under $GL(n,\mathbb{C})\times O_A(\mathbb{C})$ and it is isomorphic to $\tau \otimes \lambda $ for some finite dimensional irreducible representation of $GL(n,\mathbb{C}).$  Moreover, the isotopic component of $\mathcal{H}(\tau)$ of $\mathcal{H}$ of type $\tau$ under $GL_n(\mathbb{C})$ is irreducible under $GL(n,\mathbb{C})\times O_A(\mathbb{C}).$ In other words the correspondence $\lambda\to \tau$ is injective. 
\end{thm}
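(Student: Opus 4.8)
The plan is to deduce the theorem from the Howe duality attached to the reductive dual pair $(O_A(\mathbb{C}),\,\mathfrak{sp}_{2n}(\mathbb{C}))$ inside $\mathfrak{sp}_{2mn}(\mathbb{C})$, realized on the polynomial ring $\mathcal{P}=\mathbb{C}[M_{m\times n}]$ via the Fock model of the oscillator representation. The first step is to make the $\mathfrak{sp}_{2n}(\mathbb{C})$-action explicit: the multiplication operators by the entries $(X^{\intercal}AX)_{ij}$, the second-order operators $\Delta_{ij}$, and the first-order operators $E_{ij}=\sum_{l} x_{li}\,\partial/\partial x_{lj}$ (spanning a copy of $\mathfrak{gl}_n$) close under commutators to a representation of $\mathfrak{sp}_{2n}(\mathbb{C})$ on $\mathcal{P}$; this is a direct Weyl-algebra computation, and by the first fundamental theorem of invariant theory for the orthogonal group these are exactly the $O_A(\mathbb{C})$-invariant operators in the Weyl algebra. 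The two actions commute by construction.

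Next I would invoke Howe's theorem: $\mathcal{P}$ is multiplicity-free as an $O_A(\mathbb{C})\times\mathfrak{sp}_{2n}(\mathbb{C})$-module, with
\[
\mathcal{P}\;=\;\bigoplus_{\lambda}\,\lambda\otimes\pi(\lambda),
\]
where $\lambda$ runs over the $O_A(\mathbb{C})$-types occurring in $\mathcal{P}$, each $\pi(\lambda)$ is an irreducible lowest-weight $\mathfrak{sp}_{2n}(\mathbb{C})$-module, and $\lambda\mapsto\pi(\lambda)$ is injective. The third step identifies $\mathcal{H}$ with the bottom of this decomposition: the $\Delta_{ij}$ span precisely the abelian negative piece $\mathfrak{p}^{-}$ of $\mathfrak{sp}_{2n}$ (complementary to $\mathfrak{gl}_n$ and to the multiplication operators $\mathfrak{p}^{+}$), which strictly lowers polynomial degree, so a polynomial is annihilated by all $\Delta_{ij}$ — i.e.\ harmonic — if and only if it lies in the lowest-weight space of its $\pi(\lambda)$-component. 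That lowest-weight space is finite-dimensional and irreducible under $\mathfrak{gl}_n=\mathrm{Lie}(GL_n(\mathbb{C}))$; call the corresponding irreducible $\tau(\lambda)$. Hence $\mathcal{H}=\bigoplus_\lambda \lambda\otimes\tau(\lambda)$, and reading off $O_A(\mathbb{C})$-isotypic components gives $\mathcal{H}(\lambda)\cong\tau(\lambda)\otimes\lambda$, irreducible under $GL_n(\mathbb{C})\times O_A(\mathbb{C})$, which is the first assertion. For the second, a lowest-weight $\mathfrak{sp}_{2n}$-module is determined up to isomorphism by its lowest $\mathfrak{gl}_n$-type, so $\pi(\lambda)\mapsto\tau(\lambda)$ is injective; composed with the injection $\lambda\mapsto\pi(\lambda)$ this shows $\lambda\mapsto\tau(\lambda)$ is injective, whence the $GL_n(\mathbb{C})$-isotypic component $\mathcal{H}(\tau)$ is a single $\mathcal{H}(\lambda)$ and is therefore $GL_n\times O_A$-irreducible.

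The main obstacle is not any one step but uniformity in the parameters $m,n$: outside the stable range $m\geq 2n$ one must check that the invariant-theory input (both the first and second fundamental theorems for $O_m$, controlling the generators of and relations among the $(X^{\intercal}AX)_{ij}$ and the $\Delta_{ij}$) and the irreducibility and lowest-weight structure of the $\pi(\lambda)$ survive in the degenerate cases. These edge cases, together with the explicit dictionary $\lambda\leftrightarrow\tau$ in terms of highest weights, are precisely what Kashiwara and Vergne establish, so in practice one simply cites \cite[Theorem 2.5.41]{Vergne} and \cite[Section 6]{Kashiwara}.
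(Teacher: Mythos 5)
Your proposal is correct, and it lays out the Howe-duality mechanism (the $\mathfrak{sp}_{2n}$-action by the $\Delta_{ij}$, the multiplications by $(X^\intercal AX)_{ij}$, and the $\mathfrak{gl}_n$-operators; identification of $\mathcal{H}$ with lowest-weight spaces; injectivity of the correspondence) that underlies what the paper asserts. The paper itself gives no argument here — it simply cites Kashiwara--Vergne and \cite[Theorem~2.5.41]{Vergne} — and you correctly recognize that the degenerate range $m<2n$ and the explicit highest-weight dictionary are exactly what those references supply, so your sketch ultimately rests on the same source; the two treatments are therefore essentially the same approach, with yours making the oscillator-representation structure explicit.
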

% 
%  Consider the  following $O_{A}(\mathbb{R})$ invariant Hermitian product on $\mathcal{P},$ 
%\[
%\langle P,Q\rangle :=P(D_X) Q(0),
%\]
%where $D_X=[\frac{\partial}{\partial x_{ij}}].$
%
 Let $f\in  \mathcal{H}(\lambda)$. Then the restriction of $f $ to $V_{A,B}(\mathbb{R})\subset Hom(W,E)=M_{m\times n}[\mathbb{R}]$ defines an embedding of $ \mathcal{H}(\lambda)$ into $L^2(V_N(\mathbb{R})).$
We  have the following generalization of Proposition~\ref{ppp}. 
\begin{thm}
We have
\[
L^2(V_{A,B}(\mathbb{R}))=\oplus \mathcal{H}(\lambda)
\]
\end{thm}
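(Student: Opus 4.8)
The plan is to reduce the statement to the classical spectral decomposition of the space of polynomials on $M_{m\times n}[\mathbb{R}]$ under the action of $O_A(\mathbb{C})$, which is exactly the setting of Kashiwara--Vergne~\cite{Kashiwara} and the separation of variables theorem cited as Theorem~\ref{weightdis}. The key point is that $V_{A,B}(\mathbb{R})$ is a single $O_A(\mathbb{R})$-orbit (since $m\geq n$ and $A$ is positive definite), so $L^2(V_{A,B}(\mathbb{R}))$ decomposes into $O_A(\mathbb{R})$-isotypic components, and the claim is that each isotypic type $\lambda$ occurring is a type appearing in $\mathcal{H}$ and occurs with the correct multiplicity, namely that the restriction map $\mathcal{H}(\lambda)\hookrightarrow L^2(V_{A,B}(\mathbb{R}))$ is onto each isotypic block.

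First I would invoke the algebraic separation of variables: the polynomial ring $\mathcal{P}$ on $M_{m\times n}$ factors as $\mathcal{P}=\mathcal{H}\otimes \mathcal{I}$, where $\mathcal{I}$ is the ring generated by the invariants $(X^{\intercal}AX)_{ij}$ (this is part of the Kashiwara--Vergne package, \cite[Section 2.5]{Vergne}). Restricting to $V_{A,B}(\mathbb{R})$, every invariant $(X^{\intercal}AX)_{ij}$ becomes the constant $B_{ij}$, so the restriction of $\mathcal{P}$ to $V_{A,B}(\mathbb{R})$ coincides with the restriction of $\mathcal{H}$. Next I would argue density: polynomials restricted to the compact real algebraic variety $V_{A,B}(\mathbb{R})$ are dense in $L^2(V_{A,B}(\mathbb{R}))$ by Stone--Weierstrass (the variety is compact since $A$ is positive definite, and polynomials separate points). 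Hence the restriction of $\mathcal{H}$ is dense in $L^2(V_{A,B}(\mathbb{R}))$.

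Then I would upgrade density to an orthogonal direct sum decomposition. Each $\mathcal{H}(\lambda)$ is an irreducible $O_A(\mathbb{C})$-type (Theorem~\ref{weightdis}), finite dimensional, hence its image under restriction is a finite-dimensional $O_A(\mathbb{R})$-invariant subspace of $L^2$, and by Schur orthogonality of matrix coefficients the images for distinct $\lambda$ are mutually orthogonal. Here one must check the restriction map $\mathcal{H}(\lambda)\to L^2(V_{A,B}(\mathbb{R}))$ is injective: this follows because a harmonic polynomial vanishing on $V_{A,B}(\mathbb{R})$ would be divisible by the polynomials $(X^{\intercal}AX)_{ij}-B_{ij}$ in $\mathcal{P}$, contradicting the direct sum $\mathcal{P}=\mathcal{H}\otimes\mathcal{I}$ (a nonzero element of $\mathcal{H}$ cannot lie in the ideal generated by $\mathcal{I}_{+}$-shifts) — I would spell this out using the grading and the fact that $\mathcal{H}$ is a complement to that ideal. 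Combining orthogonality, density, and injectivity gives $L^2(V_{A,B}(\mathbb{R}))=\bigoplus_\lambda \mathcal{H}(\lambda)$ as a Hilbert space direct sum.

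The main obstacle I anticipate is the transversality/injectivity step: verifying cleanly that restriction to $V_{A,B}(\mathbb{R})$ does not collapse any $\mathcal{H}(\lambda)$ and that the harmonic decomposition of $\mathcal{P}$ is compatible with setting the invariants equal to the constants $B_{ij}$ rather than to $0$. For $B$ positive definite this is genuinely fine because $V_{A,B}(\mathbb{R})$ is Zariski-dense in the complex variety $V_{A,B}(\mathbb{C})=\{X: X^{\intercal}AX=B\}$, which is the $O_A(\mathbb{C})$-orbit used implicitly in \cite[Theorem 2.5.41]{Vergne}; so I would phrase the argument as: the graded ideal $\langle (X^{\intercal}AX)_{ij}-B_{ij}\rangle$ has the same associated graded as $\langle (X^{\intercal}AX)_{ij}\rangle$, whose complement is $\mathcal{H}$, giving the isomorphism $\mathcal{P}/\langle (X^{\intercal}AX)_{ij}-B_{ij}\rangle \cong \mathcal{H}$ as $O_A$-modules, and this quotient is exactly the ring of regular functions on $V_{A,B}$. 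The rest is the soft functional-analytic packaging above. Since the excerpt explicitly says "we record a generalization ... from the work of Kashiwara and Vergne," I expect the author's proof to simply cite \cite[Section 2.5]{Vergne} or \cite[Section 6]{Kashiwara} for this statement, exactly as the $n=1$ case (Proposition~\ref{ppp}) was dispatched by citing \cite[Section 2.5.12]{Vergne}.
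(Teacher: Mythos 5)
Your proposal is correct and takes essentially the same route as the paper: both rest on the Kashiwara--Vergne separation of variables $\mathcal{P}=\mathcal{H}\cdot Inv$ (cited from Vergne, Section 2.5.11), the observation that the invariants $(X^{\intercal}AX)_{ij}$ restrict to the constants $B_{ij}$ on $V_{A,B}(\mathbb{R})$, and density of polynomials in $L^2$ of the compact variety. The paper's proof is a terse three lines and elides the orthogonality and injectivity checks that you spell out; your additional bookkeeping (Schur orthogonality for distinct types, and the argument that restriction does not kill any $\mathcal{H}(\lambda)$ because the deformed ideal $\langle (X^{\intercal}AX)_{ij}-B_{ij}\rangle$ meets $\mathcal{H}$ trivially) is a legitimate filling-in of those gaps rather than a different method.
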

\begin{proof}
The space of all polynomial is dense in $L^2(V_{A,B}(\mathbb{R}))$. Let $Inv$ be the sub-algebra of  the $O_A(\mathbb{C})$ invariant polynomials.   The space of all polynomials is the direct sum of $\mathcal{P}=\mathcal{H}+\mathcal{H}Inv$; see \cite[Section 2.5.11]{Vergne}. Since the restriction of $Inv$ is constant on 
$V_{A,B}(\mathbb{R})$. Hence, $L^2(V_{A,B}(\mathbb{R}))=\mathcal{H}=\oplus \mathcal{H}(\lambda).$
\end{proof}
%Fix $\vec{E}\in V_{A,B}(\mathbb{R})$ and let  $O_{A,\vec{E}} \subset O_A(\mathbb{R})$ be the centralizer of $\vec{E}$. 
Let 
\(
\mathcal{V}_{\vec{E}_B}:\mathcal{P}\to \mathbb{C}
\) be the evaluation of the polynomials at $\vec{E}_B.$ 
There exists a unique  $p_{\lambda,\vec{E}_B}(\vec{X})\in \mathcal{H}(\lambda)$ that represent the restriction of $\mathcal{V}_{\vec{E}_B}$ to $\mathcal{H}(\lambda)$, which means for every $q(\vec{X})\in \mathcal{H}(\lambda)$, we have 
\begin{equation}\label{sphpoly}
\int_{V_{A,B}(\mathbb{R})} q(\vec{X}) \overline{ p_{\lambda,\vec{E}_B}}(\vec{X}) d\mu(\vec{X})= q(\vec{E}_B).
\end{equation}
\begin{lem}\label{invp}
We have 
\[
 p_{\lambda,\vec{E}_B}(\alpha \vec{X})= p_{\lambda,\vec{E}_B}( \vec{X})
\]
for every $\alpha \in O_{A,\vec{E}}(\mathbb{R}).$ Moreover, we have 
\[
 p_{\lambda,\vec{E}_B}(g^{-1} \vec{E}_B)= p_{\lambda,\vec{E}_B}( g\vec{E}_B)
\]
for every $g \in O_{A}(\mathbb{R}).$ Finally 
\[
 p_{\lambda,\vec{E}_B}= \tau(\sqrt{B})^{-1}p_{\lambda,\vec{E}}.
\]
\end{lem}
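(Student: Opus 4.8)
The plan is to derive all three assertions from the defining property \eqref{sphpoly} of $p_{\lambda,\vec{E}_B}$ together with the invariance of the measure $d\mu$ on $V_{A,B}(\mathbb{R})$ and the equivariance of the $GL_n(\mathbb{C})\times O_A(\mathbb{C})$ action on $\mathcal{H}(\lambda)$. First I would establish the $O_{A,\vec{E}}(\mathbb{R})$-invariance. Fix $\alpha\in O_{A,\vec{E}}(\mathbb{R})$; since $\alpha$ fixes $\vec{E}_B$ and $\mathcal{H}(\lambda)$ is stable under the $O_A$-action, the polynomial $\vec{X}\mapsto p_{\lambda,\vec{E}_B}(\alpha^{-1}\vec{X})$ again lies in $\mathcal{H}(\lambda)$. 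For any $q\in\mathcal{H}(\lambda)$, substituting $\vec{X}\mapsto\alpha\vec{X}$ in \eqref{sphpoly} and using that $d\mu$ is $O_A(\mathbb{R})$-invariant gives
\[
\int_{V_{A,B}(\mathbb{R})} q(\alpha\vec{X})\,\overline{p_{\lambda,\vec{E}_B}}(\vec{X})\,d\mu(\vec{X})=q(\alpha\vec{E}_B)=q(\vec{E}_B),
\]
hence $q\mapsto q(\alpha\vec{E}_B)=q(\vec{E}_B)$ is represented by $p_{\lambda,\vec{E}_B}(\alpha^{-1}\vec{X})$ as well; by the uniqueness of the representing element, $p_{\lambda,\vec{E}_B}(\alpha^{-1}\vec{X})=p_{\lambda,\vec{E}_B}(\vec{X})$, which is the first claim.

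Next, for the symmetry $p_{\lambda,\vec{E}_B}(g^{-1}\vec{E}_B)=p_{\lambda,\vec{E}_B}(g\vec{E}_B)$, I would mimic the $n=1$ argument in Lemma~\ref{spherical}. The idea is that $p_{\lambda,\vec{E}_B}$ is a ``zonal'' spherical function: its value at $g\vec{E}_B$ depends only on the double coset $O_{A,\vec{E}}(\mathbb{R})\,g\,O_{A,\vec{E}}(\mathbb{R})$, because by the first part $p_{\lambda,\vec{E}_B}$ is left $O_{A,\vec{E}}(\mathbb{R})$-invariant, and using \eqref{sphpoly} with $q=p_{\lambda,\vec{E}_B}$ composed with the $O_A(\mathbb{R})$-action one shows it is also right-invariant in the appropriate sense. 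Since the Cartan-type involution $g\mapsto g^{-1}$ on $O_A(\mathbb{R})$ preserves these double cosets (the relevant symmetric space $O_{A,\vec{E}}(\mathbb{R})\backslash O_A(\mathbb{R})$ being a compact symmetric space, so that $g$ and $g^{-1}$ lie in the same $K$-double coset), the equality follows. Concretely I would write $p_{\lambda,\vec{E}_B}(g\vec{E}_B)=\int_{O_{A,\vec{E}}(\mathbb{R})\times O_{A,\vec{E}}(\mathbb{R})} p_{\lambda,\vec{E}_B}(k_1 g k_2\vec{E}_B)\,dk_1\,dk_2$ and invoke the transposition relation $q(\vec{Y})^{\intercal}A$-pairing to transfer $g\rightsquigarrow g^{\intercal}=g^{-1}$.

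Finally, for the third identity $p_{\lambda,\vec{E}_B}=\tau(\sqrt{B})^{-1}p_{\lambda,\vec{E}}$, I would use that $\vec{E}_B=\vec{E}\sqrt{B}$ and that the $GL_n(\mathbb{C})$ factor acts by $(H,\sigma)P=P(\sigma^{-1}XH)$. For $q\in\mathcal{H}(\lambda)$, the element $\tau(\sqrt{B})q$ is the polynomial $\vec{X}\mapsto q(\vec{X}\sqrt{B})$ (up to the inverse convention, which I would track carefully), so $(\tau(\sqrt{B})q)(\vec{E})=q(\vec{E}\sqrt{B})=q(\vec{E}_B)$. Pairing against $p_{\lambda,\vec{E}}$ and using \eqref{sphpoly} for $\vec{E}$ (with the measure $V_{A,I}$) identifies the functional $q\mapsto q(\vec{E}_B)$ on $\mathcal{H}(\lambda)$ with $\tau(\sqrt{B})^{-1}p_{\lambda,\vec{E}}$ after transporting via the isometry $\vec{X}\mapsto\vec{X}\sqrt{B}$ between $V_{A,I}(\mathbb{R})$ and $V_{A,B}(\mathbb{R})$ (which pulls back $d\mu$ on $V_{A,B}$ to $d\mu$ on $V_{A,I}$). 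Uniqueness then forces the stated equality. The main obstacle I anticipate is bookkeeping of the various inverse-transpose conventions in the $GL_n(\mathbb{C})$-action on $\mathcal{H}(\lambda)$ and the matching of the invariant measures under the substitution $\vec{X}\mapsto\vec{X}\sqrt{B}$, so that the twist really comes out as $\tau(\sqrt{B})^{-1}$ and not $\tau(\sqrt{B})$ or $\tau(\sqrt{B}^{\intercal})^{-1}$; this is exactly the kind of normalization that reappears in the main formula \eqref{mainform}, so I would fix the conventions once here and reuse them.
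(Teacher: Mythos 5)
Parts (1) and (3) of your proposal follow essentially the paper's route. For (1), applying \eqref{sphpoly} to the polynomial $q\circ\alpha$ and then changing variables by $\alpha^{-1}$ (measure-preserving) shows that $p_{\lambda,\vec{E}_B}\circ\alpha^{-1}$ also represents $\mathcal{V}_{\vec{E}_B}$, and uniqueness gives the invariance. For (3), pushing the integral through $\vec{X}\mapsto\vec{X}\sqrt{B}^{-1}\colon V_{A,B}(\mathbb{R})\to V_{A,I}(\mathbb{R})$ and invoking \eqref{sphpoly} on $V_{A,I}$ shows $\tau(\sqrt{B})^{-1}p_{\lambda,\vec{E}}$ represents $\mathcal{V}_{\vec{E}_B}$; uniqueness gives the identity. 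Both match the paper.

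Your argument for (2) has a genuine gap. You justify $p_{\lambda,\vec{E}_B}(g^{-1}\vec{E}_B)=p_{\lambda,\vec{E}_B}(g\vec{E}_B)$ by asserting that $O_{A,\vec{E}}(\mathbb{R})\backslash O_A(\mathbb{R})$ is a compact symmetric space, so that $g$ and $g^{-1}$ lie in the same $O_{A,\vec{E}}(\mathbb{R})$-double coset. For $n\geq 2$ this is false: here $O_A(\mathbb{R})\cong O(m)$ and $O_{A,\vec{E}}(\mathbb{R})\cong O(m-n)$, and $O(m)/O(m-n)$ is a Stiefel manifold, not a symmetric space. Concretely, writing $K:=O_{A,\vec{E}}(\mathbb{R})$ and decomposing $\mathbb{R}^m$ as the image of $\vec{E}$ plus its orthogonal complement, the top-left $n\times n$ block of $g$ is unchanged under left and right multiplication by $K$, so it is a double-coset invariant; since $g^{-1}$ is (up to conjugating by the Gram matrix) the transpose of $g$, equality of double cosets $KgK=Kg^{-1}K$ would force that $n\times n$ block to be symmetric, which fails generically. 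So the double-coset argument does not give the claimed symmetry. The paper instead introduces the involution $\sigma$ on $O_{A,\vec{E}}(\mathbb{R})$-invariant polynomials, $\sigma(q)(g\vec{E}_B):=q(g^{-1}\vec{E}_B)$, checks it is well defined and fixes the evaluation functional, and concludes $\sigma(p_{\lambda,\vec{E}_B})=p_{\lambda,\vec{E}_B}$ by uniqueness. A repair of your argument that avoids double cosets altogether: apply \eqref{sphpoly} to $q:=p_{\lambda,\vec{E}_B}\circ g^{-1}\in\mathcal{H}(\lambda)$ and change variables to obtain $p_{\lambda,\vec{E}_B}(g^{-1}\vec{E}_B)=\overline{p_{\lambda,\vec{E}_B}(g\vec{E}_B)}$, then use that $p_{\lambda,\vec{E}_B}$ is real-valued (the evaluation functional is real and $\mathcal{H}(\lambda)$ carries a real structure).
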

\begin{proof}
Note that the functional  $\mathcal{V}_{\vec{E}_B}$ is invariant by $O_{A,\vec{E}}(\mathbb{R}),$ which means
\[
\mathcal{V}_{\vec{E}_B} (q(\vec{X}))= \mathcal{V}_{\vec{E}_B}  (q(\alpha\vec{X}))
\]
for every $\alpha \in O_{A,\vec{E}}(\mathbb{R}).$ This concludes the first part of the lemma. Let $\mathcal{P}_{O_{A,\vec{E}}(\mathbb{R})}$ be the set of harmonic polynomials which are invariant by $O_{A,\vec{E}}(\mathbb{R}).$ There is an involution $\sigma$ defined on  $\mathcal{P}_{O_{A,\vec{E}}(\mathbb{R})}$ as follows. For $q\in \mathcal{P}_{O_{A,\vec{E}}(\mathbb{R})} $ 
and $\vec{X}=g\vec{E}_B$ define 
\[
\sigma(q)(\vec{X}):= q(g^{-1}\vec{E}_B).
\]
It is easy to see that $\sigma(q) \in \mathcal{P}_{O_{A,\vec{E}}(\mathbb{R})}$ and 
\(
\mathcal{V}_{\vec{E}_B} (q)= \mathcal{V}_{\vec{E}_B}  (\sigma(q)) \). This implies $\sigma(p_{\lambda,\vec{E}_B})=p_{\lambda,\vec{E}_B}$, which concludes the second part of the lemma. Finally, we have 
\begin{equation}
\begin{split}
\int_{V_{A,B}(\mathbb{R})} q(\vec{X}) \overline{ \tau(\sqrt{B})^{-1}p_{\lambda,\vec{E}}}(\vec{X}) d\mu(\vec{X})&= \int_{V_{A,B}(\mathbb{R})} q(\vec{X}) \overline{ p_{\lambda,\vec{E} }}(\vec{X}\sqrt{B}^{-1}) d\mu(\vec{X})
\\
&=\int_{V_{A,I}(\mathbb{R})} q(\vec{Y}\sqrt{B}) \overline{ p_{\lambda,\vec{E} }}(\vec{Y}) d\mu(\vec{Y})
\\
&= q(\vec{E}\sqrt{B})=q(\vec{E}_B).
\end{split}
\end{equation}
This concludes the proof of the lemma. 
\end{proof}

\subsection{The weight space with a functional}\label{weights}
Let $ \mathcal{H}(\lambda)^{*}$ be the dual vector space of $ \mathcal{H}(\lambda)$. $GL_n(\mathbb{C})$ acts on $ \mathcal{H}(\lambda)^{*}$ by  ${\tau^{\intercal}}^{-1}.$ Every  $f\in \mathcal{H}(\lambda)$ defines a functional $\langle f,   \mathcal{H}(\lambda)^{*}\rangle \to \mathbb{C}.$ 

%
%For $f\in \mathcal{H}(\lambda)$  let $V_{f}$ be the span of the orbit of $f$ by the action of $GL_n(\mathbb{C}).$
%By Theorem~\ref{weightdis}, the action of $GL_n(\mathbb{C})$ on $V_{f}$ is irreducible and is isomorphic to $\tau$ which only depends on $\lambda.$ Finally, for every $f\in \mathcal{H}(\lambda)$ we associate  the pair $(V_{f}^*,f),$ where $\mathcal{H}(\lambda)$ is the dual vector space of $\mathcal{H}(\lambda)$ and f defines a functional on  $V_{f}^*.$
%
%%
%
%\subsection{Computing $h(\pi_{\infty})$ in terms of the highest weight vector of $\pi_{\infty}$}
%
%
\section{The oscillator representations and  Weyl's sums}\label{SWF}
In this section, we describe the Schr\"odinger Model of the oscillator representation. We use this model  to construct an explicit automorphic Siegel's theta kernel. Next, we define the theta transfer  $\Theta(\pi)$ of an automorphic representation $\pi$ of $O_A$.  We show that  $\Theta(\phi_{\pi})$ is a holomorphic Siegel modular form with values in the dual space of vectors of $\pi_{\infty}$ and describe explicitly  its weight and its level in terms of $\pi_{\infty}$ and $A_{m\times m}$. We also show that $\Theta(\pi)$ is an eigenfunction of the Hecke operators at the unramified places. Finally, we relate the Weyl sums $W(\phi_{\pi},B)$ to  $\langle \Theta(\pi,B),\phi_{\pi}\rangle,$ where $\Theta(\pi,B)$ is  the $B$-th Fourier coefficient of $\Theta(\pi).$ This generalizes the result of Shintani~\cite{Shintani}.

\subsection{The Schr\"odinger Model of the oscillator representation}
We begin by describing the oscillator representation. Let $W:=\mathbb{Q}^n$ and $W^*$ be its dual vector space.  Consider the $2n$ dimensional symplectic vector space $W\oplus W^*$with the symplectic  form:
\[
\langle (x_1,y_1),(x_2,y_2) \rangle:= y_2(x_1)-y_1(x_2).
\] 
We fix the lattices $L_{W}:=\mathbb{Z}^n \subset W$ and $L_{W^*}:=\mathbb{Z}^n\subset W^*.$
%Note that $\mathbb{Z}^2$ is a self-dual lattice in $W.$
Let $E=\mathbb{Q}^m$ be an orthogonal vector space with the positive definite symmetric form 
\[
(\vec{x},\vec{y})= \vec{x}^{\intercal}A\vec{y}.
\]
%By assuming $F(\vec{x})$ is uni-modular,  it follows that $\mathbb{Z}^m$ is a self-dual lattice inside $H.$ 
We fix the lattice $L_E:=\mathbb{Z}^m\subset E$ and denote its dual lattice by $L_{E}^*:=A^{-1}\mathbb{Z}^m\subset E.$
Consider the $2mn$ dimensional  symplectic vector space 
 $ (W\oplus W^*)\otimes E$ with the symplectic form 
 \[
 \langle  w_1\otimes v_1 , w_2\otimes v_2  \rangle =\langle w_1,w_2 \rangle (v_1,v_2),
 \]
where $v_1,v_2\in E$ and $w_1,w_2 \in W\oplus W^*.$ Note that $L:=L_W\otimes L_E^* \oplus L_{W^*}\otimes L_E $ is a self dual lattice inside $ (W\oplus W^*)\otimes E.$
We write a complete polarization as $(W\oplus W^*)\otimes E = W\otimes E\oplus W^*  \otimes E ,$ which means
  $W \otimes E $ and $  W^*\otimes E$  are the isotropic subspace of the symplectic vector space $(W\oplus W^*)\otimes E.$ We consider the adelic points of  $(W\oplus W^*)\otimes E$ with respect to the self dual lattice $L.$ We identify $W^*\otimes E$ with $Hom(W,E).$ Let $\mathcal{S}\left(Hom(W,E)\otimes \mathbb{A}_{\mathbb{Q}}\right)$ be the  Schwartz-Bruhat functions defined on the adelic space $Hom(W,E)\otimes \mathbb{A}_{\mathbb{Q}}.$ 
  Fix $\psi$ to be the   continuous  additive character on $\mathbb{Q}\backslash A_\mathbb{Q}/\prod_p\mathbb{Z}_p$ which is defined as follows on a complete representative set:
\[
\psi\big((a_{\infty},0,0,\dots) \big):=\exp(2\pi i a_{\infty}).
\]
By using standard the standard basis in the lattices $L_{W\oplus W^*}$ and $L$, we identify the symplectic group $SP_{W\oplus W^*}(\mathbb{A}_\mathbb{Q})$ with  $SP_{2n}(\mathbb{A}_\mathbb{Q})$ and $SP_{(W\oplus W^*)\otimes E}(\mathbb{A}_\mathbb{Q})$ with  $SP_{2mn}(\mathbb{A}_\mathbb{Q}).$ We note that under these coordinates the matrix representation of $s\otimes I_{m\times m}\in SP_{2mn}$  for $s=\begin{bmatrix} g_{11}& g_{12}\\ g_{21}&g_{22}  \end{bmatrix}\in SP_{2n}(\mathbb{A}_\mathbb{Q})$ is 
\begin{equation}\label{emb}
\begin{bmatrix} g_{11}\otimes I_{m\times m}& g_{12}\otimes A\\ g_{21}\otimes A^{-1}&g_{22}\otimes I_{m\times m} \end{bmatrix}.
\end{equation}

 Weil defined the Metaplictic group $\widetilde{SP}_{2mn} [ \mathbb{A}_{\mathbb{Q}}]$ (double cover of the symplectic group $SP_{2mn}[ \mathbb{A}_{\mathbb{Q}}]$) and constructed the unitary oscillator representation $\omega_{\psi}.$  In what follows, we record some  properties of $\omega_{\psi}$  from  \cite[Section 2]{Howe}; we refer the reader to  \cite[Section 2]{Howe} and \cite{Gelbart} for the definition and further properties of  $\omega_{\psi}$. In the Schr\"odinger Model of the oscillator representation,  $\omega_{\psi}$ acts on  
 $L^2\left(Hom(W,E)\otimes \mathbb{A}_{\mathbb{Q}}\right).$ It is not convenient and necessary for our purpose to give the action of $\widetilde{SP}_{2mn} [ \mathbb{A}_{\mathbb{Q}}].$ We only need the action of a parabolic subgroup of $\widetilde{SP}_{2mn} [  \mathbb{A}_{\mathbb{Q}}],$ which we describe next.  Let $P\subset SP_{2mn}$ be the stabilizer of  $W\otimes E.$ Let $M\subset P$ (maximal levi subgroup) be the stabilizer of  $W \otimes E $ and $  W^*\otimes E$ and $N\subset P$ (maximal unipotent subgroup) be the subgroup which acts as identity on $W\otimes E.$ We have a factorization $P=MN.$  
    More concretely,
 \begin{equation}\label{subgroups}
 \begin{split}
 P&=\left\{\begin{bmatrix}(g^t)^{-1}  & *\\ 0&g  \end{bmatrix}: g \in GL(W^*\otimes E)  \right\},
 \\
M&=\left\{\begin{bmatrix}(g^t)^{-1}  & 0\\ 0&  g\end{bmatrix}: g \in GL(W^*\otimes E)  \right\} ,
\\
N&=\left\{\begin{bmatrix}I_m & n\\ 0&I_m  \end{bmatrix}: n: W^* \otimes E \to W\otimes E\  \text{ and }  n=n^{\intercal} \right\}.
\end{split}
 \end{equation}
 We denote the inverse image of $P$ and $M$  in $\widetilde{SP}_{2mn} [  \mathbb{A}_{\mathbb{Q}}]$ by $\widetilde{P}$ and $\widetilde{M}.$ It follows that $N$ has a unique lift in  $\widetilde{SP}_{2mn} [  \mathbb{A}_{\mathbb{Q}}]$, so we may regard $N\subset \widetilde{SP}_{2mn} [  \mathbb{A}_{\mathbb{Q}}].$ Given $\tilde{g}\in \widetilde{M}$, its image in $M$ will be denoted by of $g.$
 The oscillator representation acts as follows in the Schr\"odinger Model; see~\cite{Howe, Gelbart}. For $ \Phi \in L^2(W^*\otimes E)$ we have 
 \begin{equation}\label{schr}
 \begin{split}
  \omega_{\psi}\left( \begin{bmatrix}I_m & n\\ 0&I_m  \end{bmatrix} \right)\Phi(X)=\psi\left(\frac{1}{2} \langle  X , n(X)  \rangle\right)  \Phi(X),
\\
 \omega_{\psi}\left(\tilde{g} \right) \Phi (X) =\gamma(\tilde{g}) |\det (g)|^{-1/2} \Phi(g^{-1}(X)), 
% \\
%  \omega_{\psi}\Big(\begin{bmatrix}0 & 1 \\ -1 &0   \end{bmatrix}   \Big) \varphi= \gamma(F,\psi)\hat{\varphi}=\gamma(F,\psi)\varphi,
 \end{split}
 \end{equation}
 where  $X\in W^*\otimes E, $ $g\in GL(W^*\otimes E)$ and $\gamma(\tilde{g})$  is a certain root of unity, and det is the usual determinant function on
$GL(W^*\otimes E)$, and $|.|$ denotes the standard absolute value on $A_{\mathbb{Q}}$. In particular, for  $(\alpha,\tilde{s})\in O_A\times \widetilde{GL}(W^*)\subset GL(W^*\otimes E),$ we have  
 \begin{equation}\label{schrod}
  \omega_{\psi}( (\tilde{\alpha},\tilde{s}))   \Phi (X) =\gamma(\tilde{s}) |\det (s)|^{-m/2} \Phi(\alpha^{-1}\circ X\circ {s^{\intercal}}^{-1}),  
 \end{equation}
 where $s^{\intercal}\in GL(W)$ is the transpose of $s$ and $\alpha^{-1}\circ X\circ {s^{\intercal}}^{-1} \in Hom(W,E)$ is the composition of the linear maps. Here we have for convenience replaced $\widetilde{O_A}$ with $O_A$ itself and identified $\widetilde{SP}_{2mn} [  \mathbb{A}_{\mathbb{Q}}] $ with the image of $ \omega_{\psi}$; see \cite[Section 4]{Howe} for further discussion.  

 \subsection{Construction of the Siegel theta kernel}\label{kernel} In this section, we construct Siegel's theta kernel.  We begin by defining the Siegel upper half place associated to the symplectic space  $(W\oplus W^*,\langle, \rangle).$ Let 
\[
\mathbb{D}:=\left\{Z\in  W^*\otimes{\mathbb{C}} \to W \otimes {\mathbb{C}} \text {  such that  } Z^{\intercal}=Z, \text{ and } \Im(Z) >0   \right\},
\]
where $\Im(Z)$ is obtained by taking the imaginary part of every matrix entry of $Z$ and $\Im(Z)>0$ means  $\Im(Z)$ is positive definite. Let $s:=\begin{bmatrix} g_{11}& g_{12}\\ g_{21}&g_{22}  \end{bmatrix}\in SP_{2n}(\mathbb{Q}),$ where $g_{11}\in Hom (W,W),$ $g_{12}\in Hom(W^*,W),$ $g_{21}\in Hom(W,W^*)$ and $g_{22}\in Hom(W^*,W^*).$ Then $s$ acts on $\mathbb{D}$ as follows: 
\begin{equation}\label{Mob}
s:Z\to (g_{11}\circ Z+g_{12})  \circ  (g_{21}\circ Z+g_{22})^{-1}
\end{equation}
where $\circ$ is the composition of linear maps in $Hom$. For $Z\in \mathbb{D}$ and $ f\in\mathcal{H}(\lambda)=\lambda\otimes\tau,$ where $\mathcal{H}(\lambda)$ is the irreducible representation of $GL(n,\mathbb{C})\times O_A(\mathbb{C})$ defined in Theorem~\ref{weightdis}, we define $\varphi_{f,Z}\in \mathcal{S}\left( Hom(W,E)(\mathbb{A}_{\mathbb{Q}}) \right)$,  as follows:  
 \[
 \varphi_{f,Z}(\vec{X}_{\infty},\prod_p{\vec{X}_p}):=
 \exp\left(i\pi tr(Z\vec{X}_{\infty}^{\intercal}A\vec{X}_{\infty}) \right)f(\vec{X}_{\infty})\prod_{p} 1_{\mathbb{Z}_p}(\vec{X}_p)  ,
 \]
 where $1_{\mathbb{Z}_p}(\vec{X}_p)=1$ if $\vec{X}_p\in \mathbb{Z}_p^d$ and $1_{\mathbb{Z}_p}(\vec{X}_p)=0$ otherwise. Let $\mathcal{H}(\lambda)^*$ be the dual vector space of $\mathcal{H}(\lambda)$ that is defined in Section~\ref{weights}. We  define $ \phi_{\lambda,Z} \in \mathcal{S}\left( Hom(W,E)(\mathbb{A}_{\mathbb{Q}}), \mathcal{H}(\lambda)^* \right) $ to be the unique function that satisfies: \( \langle  \phi_{\lambda,Z}, g\rangle = \varphi_{g,Z}\) for every $g\in  \mathcal{H}(\lambda).$
% which is  the  Schwartz-Bruhat space of the functions with values in $\mathcal{H}(\lambda)$ defined in Section~\ref{weights}
% We also define $\varphi_{f,Z}\in \mathcal{S}\left( Hom(W,E)(\mathbb{A}_{\mathbb{Q}}) \right)$ as follows:
% \[
% \begin{split}
% \varphi_{f,Z}(\vec{X}_{\infty},\prod_p{\vec{X}_p})&:=\mathcal{F}_{\vec{X}_{\infty}}\left( \phi_{\lambda,Z}(\vec{X}_{\infty},\prod_p{\vec{X}_p})\right)
% \\
% &=\exp\left(- i\pi tr(Z\vec{X}_{\infty}^{\intercal}A\vec{X}_{\infty}) \right) f(\vec{X_{\infty}})\prod_{p} 1_{\mathbb{Z}_p}(\vec{X}_p),
% \end{split}
% \]
% where $\mathcal{F}_{\vec{X}_{\infty}}$ is the functional on $\mathcal{P}$ defined in  Section~\ref{weights}.   
 Next, we describe  the automorphic properties of $\varphi_{f,Z}$ and $ \phi_{\lambda,Z}$  as a function of $Z$ on the Siegel half plane. Recall that by Theorem~\ref{weightdis}, 
 $$\tau\left(((g_{21}Z+g_{22})^{\intercal})^{-1}\right)  f(\vec{X_{\infty}})=f\left(\vec{X_{\infty}} \circ ((g_{21}Z+g_{22})^{\intercal})^{-1} \right).$$
 \begin{lem}\label{sigwe}
Let $f\in  \mathcal{H}(\lambda)$ and $\tilde{s}\in \widetilde{SP}_{2n}(\mathbb{R})$ where  $s=\begin{bmatrix} g_{11}& g_{12}\\ g_{21}&g_{22}  \end{bmatrix}\in SP_{2n}(\mathbb{R}).$ We have 
\begin{equation}
\omega_{\psi}(\tilde{s})  \varphi_{f,Z}= \gamma(\tilde{s}) \det (g_{21}Z+g_{22})^{-m/2}  \varphi_{\left( \tau(g_{21}Z+g_{22})^{\intercal} \right)^{-1}f,s(Z)},
\end{equation}
and equivalently,
\begin{equation}
\omega_{\psi}(\tilde{s})  \phi_{\lambda,Z}= \gamma(\tilde{s}) \det (g_{21}Z+g_{22})^{-m/2} \left( \tau(g_{21}Z+g_{22})^{\intercal} \right)^{-1} \phi_{\lambda,s(Z)},
\end{equation}

 \end{lem}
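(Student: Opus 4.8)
## Proof Plan for Lemma~\ref{sigwe}

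The plan is to verify the transformation law by reducing to generators of $\widetilde{SP}_{2n}(\mathbb{R})$ and applying the explicit Schrödinger-model formulas \eqref{schr}--\eqref{schrod}. Since $SP_{2n}$ is generated by the Siegel parabolic $P$ (the block-upper-triangular matrices, i.e. the $g_{21}=0$ case) together with a single Weyl element $w=\begin{bmatrix}0&-I\\ I&0\end{bmatrix}$, it suffices to check the identity on these two classes of elements and then argue that both sides define the same $1$-cocycle-twisted functional equation, so that it propagates to all of $\widetilde{SP}_{2n}(\mathbb{R})$ by the standard composition/cocycle relation for $\gamma$ and the automorphy factor $\det(g_{21}Z+g_{22})^{-m/2}$. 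Concretely, I would first record that the second displayed identity follows formally from the first by pairing against an arbitrary $g\in\mathcal{H}(\lambda)$ and using the defining property $\langle\phi_{\lambda,Z},g\rangle=\varphi_{g,Z}$ together with the way $GL_n(\mathbb{C})$ acts on $\mathcal{H}(\lambda)^*$ by ${\tau^{\intercal}}^{-1}$ (Section~\ref{weights}); so the whole lemma reduces to the scalar-valued statement about $\varphi_{f,Z}$.

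For the parabolic part, write $s=\begin{bmatrix}(h^{\intercal})^{-1}&*\\0&h\end{bmatrix}$ and decompose it as a unipotent element $\begin{bmatrix}I&n\\0&I\end{bmatrix}$ (with $n=n^{\intercal}$) times a Levi element $\mathrm{diag}((h^{\intercal})^{-1},h)$. Under the embedding \eqref{emb}, the action on $Hom(W,E)$ is via $X\mapsto X\circ h^{\intercal}$ on the Levi and the quadratic character $\psi(\tfrac12\langle X,n(X)\rangle)$ on the unipotent; here one must track the factor $A$ appearing in \eqref{emb}, so that $\langle X, n(X)\rangle$ becomes $\mathrm{tr}(n\, X^{\intercal}AX)$ and the exponential $\exp(i\pi\,\mathrm{tr}(ZX^{\intercal}AX))$ gets shifted to $\exp(i\pi\,\mathrm{tr}((Z+n)X^{\intercal}AX))$, matching $s(Z)=Z+n$. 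The Levi action $X\mapsto X\circ h^{\intercal}$ reproduces the harmonic-polynomial twist $f(X\circ (h^{\intercal})^{-1})=\tau((h^{\intercal})^{-1})f(X)$ by Theorem~\ref{weightdis}, and from \eqref{schrod} (with $s=h$ there) produces the factor $|\det h|^{-m/2}=\det(g_{21}Z+g_{22})^{-m/2}$ since $g_{21}=0$, $g_{22}=h$. The root of unity $\gamma(\tilde s)$ is exactly the one in \eqref{schr}. This settles the parabolic case.

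The main obstacle is the Weyl element $w$: there $\omega_\psi(\tilde w)$ acts by a (symplectic) Fourier transform on $L^2(Hom(W,E))$ rather than by a pullback, so one must compute the Fourier transform of $\varphi_{f,Z}(X)=\exp(i\pi\,\mathrm{tr}(ZX^{\intercal}AX))f(X)$. This is a Gaussian-type integral twisted by a harmonic polynomial; the key inputs are (i) the classical formula for the Fourier transform of $\exp(i\pi\,\mathrm{tr}(ZX^{\intercal}AX))$, which produces the factor $\det(-iZ)^{-m/2}$ (up to $\det A$ normalization, absorbed in the fixed choice of theta kernel and the level) and replaces $Z$ by $-Z^{-1}=w(Z)$, and (ii) the fact that the harmonic polynomial $f$ is an eigenfunction under Fourier transform in the appropriate sense — more precisely, $f(X)\exp(i\pi\,\mathrm{tr}(ZX^{\intercal}AX))$ Fourier-transforms to $\bigl(\tau(\text{linear factor})^{\intercal}\bigr)^{-1}f$ times the Gaussian with $w(Z)$, which is the Bargmann/Fock-model manifestation of harmonicity (cf. Kashiwara--Vergne~\cite{Kashiwara} and \cite[Section 2.5]{Vergne}). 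I would invoke this harmonic-Fourier identity as the technical heart, check that the resulting automorphy factor is $\det(Z)^{-m/2}$ (which is $\det(g_{21}Z+g_{22})^{-m/2}$ for $w$, since $g_{21}=I$, $g_{22}=0$) and that the $\tau$-twist is $(\tau(Z^{\intercal}))^{-1}=(\tau(g_{21}Z+g_{22})^{\intercal})^{-1}$, and confirm the phase $\gamma(\tilde w)$ agrees with Weil's. Finally, since $P$ and $w$ generate $\widetilde{SP}_{2n}(\mathbb{R})$ and both the automorphy factor $j(s,Z):=\det(g_{21}Z+g_{22})^{-m/2}$ and $\gamma$ satisfy the cocycle relation under composition while $\tau((g_{21}Z+g_{22})^{\intercal})^{-1}$ is the pullback cocycle for the $\tau$-action, the identity for general $\tilde s$ follows by composing the two cases; I would spell out this last step briefly using $\omega_\psi(\tilde s_1\tilde s_2)=\omega_\psi(\tilde s_1)\omega_\psi(\tilde s_2)$.
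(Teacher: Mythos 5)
Your proposal is correct and outlines the standard proof of this transformation law. The paper itself does not prove Lemma~\ref{sigwe}: its ``proof'' is a bare citation to \cite[Section 2.5.42]{Vergne}. So the comparison is really between your argument and the one in Vergne's text, not one in this paper, and your strategy --- reduce to the Siegel parabolic and the Weyl element $w$, handle the parabolic directly from the Schr\"odinger-model formulas~\eqref{schr}--\eqref{schrod}, handle $w$ by the Bochner--Hecke/Kashiwara--Vergne Fourier identity for harmonics times Gaussians, and propagate to all of $\widetilde{SP}_{2n}(\mathbb{R})$ via the cocycle relations for $\gamma$ and $\det(g_{21}Z+g_{22})^{-m/2}$ --- is exactly the classical route and is sound.

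Two small points worth tightening if you write this up in full. First, on the Levi step you should be explicit about the parsing of the automorphy factor: the paper's $\left(\tau(g_{21}Z+g_{22})^{\intercal}\right)^{-1}$ means $\tau$ applied to the transposed matrix, then inverted (this is pinned down by the displayed identity immediately preceding the lemma), which is what your computation $f(X\circ(h^{\intercal})^{-1})=\tau((h^{\intercal})^{-1})f(X)$ produces, so the factors do match once the parsing is fixed. Second, on the deduction of the $\mathcal{H}(\lambda)^{*}$-valued identity from the scalar one, the pairing argument is right but you should track that the contragredient action $\tau^{\intercal\,-1}$ from Section~\ref{weights} is precisely what converts $\langle\phi_{\lambda,s(Z)},\tau(H)^{-1}g\rangle$ into $\langle(\tau(H)^{\intercal})^{-1}\phi_{\lambda,s(Z)},g\rangle$ --- the equivalence is definitional but the transposes are easy to misplace. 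With those two clarifications, your proof is complete and is a genuine improvement over the paper's unproved citation.
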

 \begin{proof}
 We refer the reader to \cite[Section 2.5.42]{Vergne}.
 \end{proof}

 Let $\Theta$ be the following distribution on  $\mathcal{S}\left( Hom(W,E)\right)$ which sends a function  to the sum of its values on the rational points of $Hom(W,E)(\mathbb{Q})$:
 \[
 \Theta(f):= \sum_{a\in Hom(W,E)(\mathbb{Q})} f(a).
 \]
 Let $ \vartheta(g,f):= \Theta(\omega_{\psi}(g)f).$ It is well-known that $SP_{2mn}(\mathbb{Q})$ splits in $\widetilde{SP}_{2mn}(\mathbb{A}_{\mathbb{Q}})$ and we consider $SP_{2mn}(\mathbb{Q})\subset \widetilde{SP}_{2mn}(\mathbb{A}_{\mathbb{Q}}).$  It follows (by a generalized poisson formula) that $ \vartheta(g,f)$ is invariant by the action of $SP_{2mn}(\mathbb{Q})$ on the left and it defines an automorphic function on $L^2\left(SP_{2mn}(\mathbb{Q})) \backslash \widetilde{SP}_{2mn}(\mathbb{A}_{\mathbb{Q}})\right).$

 For $\alpha\in O_A(\mathbb{A}_{\mathbb{Q}}) $, $s\in \widetilde{SP}_{2n}(\mathbb{\mathbb{A}_{\mathbb{Q}}})$ and  $f\in \mathcal{H}(\lambda)$ for some $\lambda$,    we define the Siegel theta kernel  $\vartheta(\alpha,\tilde{s},f,Z)$ to be the following:
 \begin{equation}\label{Siegel}
 \begin{split}
\vartheta(\alpha,\tilde{s},f,Z):=\Theta (\omega_{\psi}((\alpha,\tilde{s}))\varphi_{f,Z}).
%\\
%&=\sum_{a\in Hom(W,E)(\mathbb{Q})}\gamma(\tilde{s}) |\det (s)|^{-m/2} \varphi_{\lambda\otimes\tau,Z}(\alpha^{-1}\circ a\circ {s^{\intercal}}^{-1}).
\end{split}
 \end{equation}
%We note that $(O_A,\widetilde{SP}_2)$ is a dual pair in  $\widetilde{SP}_{H\otimes W}.$ Let $(\alpha,s)\in O_A\times \widetilde{Sp}_2.$ Then we have 
%\[
%\begin{split}
%\Theta(\alpha\otimes s,\varphi)&=\sum_{a\in H_1(Q)}\big(\omega_{\psi}(\alpha\otimes s)(\varphi)\big)(a)
%\\
%&=\sum_{a\in H_1(Q)}\big(\omega_{\psi}(s)(\varphi)\big)(\alpha^{-1}a).
%\end{split}
%\]
Note that $\vartheta(\alpha,\tilde{s},f,Z)$ is $O_A(\mathbb{Q})\times SP_{2n}(\mathbb{Q})$ invariant, and it defines a kernel which transfers the space cusp forms $\mathcal{A}^0\big(O_A(\mathbb{Q})\backslash O_{F}(\mathbb{A}_{\mathbb{Q}}) \big)$ to the automorphic forms of 
$L^2\big(SP_{2n}(\mathbb{Q})\backslash \widetilde{SP}_{2n}(\mathbb{A}_{\mathbb{Q}})\big)$ (possibly zero) and vice versa. Similarly, we define $\theta(\alpha,\tilde{s},\lambda,Z)$ with values in $\mathcal{H}(\lambda)^*$ to be the unique function which satisfies 
\[
\langle \theta(\alpha,\tilde{s},\lambda,Z),g  \rangle =  \vartheta(\alpha,\tilde{s},g,Z)
\]
for every $g\in \mathcal{H}(\lambda).$
%
% Moreover, it follows from the result of Howe that if $\phi_{\pi,j}$ is a vector in an  irreducible representations of $L^2\big(O_A(\mathbb{Q})\backslash O_{F}(\mathbb{A}_{\mathbb{Q}}) \big)$ then $\Theta(\varphi,\phi_{\pi,j})(s)$  belongs to  an irreducible automorphic representation of $L^2(Sp_2(\mathbb{Q}) \backslash \widetilde{Sp}_2(A_{\mathbb{Q}}))$. 
\subsection{The weight and the level of the theta lift} For $\alpha\in O_A(\mathbb{Q})\backslash O_{F}(\mathbb{A}_{\mathbb{Q}})  $ we write  
\(
\theta(\alpha,\lambda,Z):=\theta(\alpha,\tilde{I}_{n\times n},\lambda,Z), \) where $\tilde{I}_{n\times n}$ is the identity element of  $\widetilde{SP}_{2n}(\mathbb{A}_{\mathbb{Q}})$. In this section, we show that $\theta(\alpha,\lambda,Z)$ is a holomorphic Siegel modular form of $Z$ with values in the vector space $\mathcal{H}(\lambda)^*$. Moreover, we show that its weight is given by the irreducible representation $\gamma\det^{m/2}(\tau^{\intercal})^{-1}$ and its level by the level of  $A$. We begin by defining the associated congruence subgroup of $SP_{2n}(\mathbb{Z}).$ Let $D$ be the level of $A$ which is the  smallest integer such that $DA^{-1}$ is integral and has even entries on its diagonal. We define the congruence subgroup $ \Gamma_0^n(D)\subset SP_n(\mathbb{Z}):$
\begin{equation}\label{congs}
 \Gamma_0^n(D):=\Big\{\begin{bmatrix}g_{11}& g_{12}\\ g_{21}&g_{22} \end{bmatrix} \in SP_n(\mathbb{Z}): g_{21}\in DM_{n\times n}(\mathbb{Z}), \text{ and } g_{12}\in 2 M_{n\times n}(\mathbb{Z})  \Big\}.
\end{equation}
\begin{prop} \label{ppww}Let $s_0=\begin{bmatrix}g_{11}& g_{12}\\ g_{21}&g_{22} \end{bmatrix}\in  \Gamma_0^n(D)$.
We have 
\begin{equation}
\theta(\alpha,\lambda,Z)= \gamma(s_0) \det (g_{21}Z+g_{22})^{-m/2}   ({\tau(g_{21}Z+g_{22})^{\intercal}})^{-1} \theta\left(\alpha,\lambda,s_0Z  \right).
\end{equation}
\end{prop}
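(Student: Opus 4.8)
\emph{Proof plan.} The proof exploits the invariance of the Siegel theta distribution under rational symplectic substitutions. Recall that $SP_{2mn}(\mathbb{Q})$ splits inside $\widetilde{SP}_{2mn}(\mathbb{A}_{\mathbb{Q}})$ and that $\Theta\circ\omega_{\psi}(\gamma)=\Theta$ for every $\gamma\in SP_{2mn}(\mathbb{Q})$ (the generalized Poisson summation used to build $\vartheta$). The element to insert is $\iota(s_0):=s_0\otimes I_{m\times m}\in SP_{2mn}(\mathbb{Q})$, the image of $s_0\in\Gamma_0^n(D)$ under the embedding \eqref{emb}; note $\iota(s_0)$ has entries in $\mathbb{Q}$ but not necessarily in $\mathbb{Z}_p$ at $p\mid D$, which is harmless since only $SP_{2mn}(\mathbb{Q})$-invariance is needed. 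Since the images of $O_A$ and $SP_{2n}$ inside $\widetilde{SP}_{2mn}(\mathbb{A}_{\mathbb{Q}})$ commute (they form a reductive dual pair), writing $\theta(\alpha,\lambda,Z)$ as $\Theta$ applied componentwise to $\omega_{\psi}\big((\alpha,\tilde I_{n\times n})\big)\phi_{\lambda,Z}$ gives
\[
\theta(\alpha,\lambda,Z)=\Theta\Big(\omega_{\psi}\big((\alpha,\tilde I_{n\times n})\big)\,\omega_{\psi}(\iota(s_0))\,\phi_{\lambda,Z}\Big),
\]
so the whole problem reduces to computing $\omega_{\psi}(\iota(s_0))\phi_{\lambda,Z}$, which factors as a product over the places of $\mathbb{Q}$.

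At the archimedean place, $\iota(s_0)_{\infty}$ is the image in $SP_{2mn}(\mathbb{R})$ of $s_0\in SP_{2n}(\mathbb{R})$ under \eqref{emb}, so the second displayed identity in Lemma~\ref{sigwe} applies verbatim and transforms the archimedean factor of $\phi_{\lambda,Z}$ into $\gamma_{\infty}(s_0)\,\det(g_{21}Z+g_{22})^{-m/2}$ times $\big(\tau(g_{21}Z+g_{22})^{\intercal}\big)^{-1}$ applied to the archimedean factor of $\phi_{\lambda,s_0(Z)}$. The holomorphy of $Z\mapsto\phi_{\lambda,Z}$ (and hence of $\theta(\alpha,\lambda,Z)$) is built into this factor through the harmonicity of the polynomials in $\mathcal{H}(\lambda)$ (Theorem~\ref{weightdis}).

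At a finite prime $p$, the local component of $\phi_{\lambda,Z}$ is the vector-valued characteristic function $1_{\mathbb{Z}_p^{mn}}$ of the lattice $L_{W^{*}}\otimes L_{E}\otimes\mathbb{Z}_p$, and the point is that $\omega_{\psi_p}(\iota(s_0))$ fixes it up to a local Weil index. Under \eqref{emb} the unipotent block of $\iota(s_0)$ is governed by $g_{12}\otimes A$ and, via the first line of \eqref{schr}, multiplies $1_{\mathbb{Z}_p^{mn}}$ by a quadratic exponential in $X$ built from $g_{12}$ and $A$; this is trivial on $\mathbb{Z}_p^{mn}$ exactly when $g_{12}\in 2M_{n\times n}(\mathbb{Z})$, using that $D$ is chosen so that $DA^{-1}$ is integral with even diagonal. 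The remaining part of $\iota(s_0)$ involves $g_{21}\otimes A^{-1}$ and a partial Fourier transform; it preserves the lattice, hence $1_{\mathbb{Z}_p^{mn}}$, precisely when $g_{21}\otimes A^{-1}$ is $p$-integral for every $p$, i.e.\ $g_{21}\in DM_{n\times n}(\mathbb{Z})$ by minimality of $D$. Combining over all finite places and with the archimedean factor, the local Weil indices assemble into the single root of unity $\gamma(s_0)$ of the statement, and
\[
\omega_{\psi}(\iota(s_0))\phi_{\lambda,Z}=\gamma(s_0)\,\det(g_{21}Z+g_{22})^{-m/2}\,\big(\tau(g_{21}Z+g_{22})^{\intercal}\big)^{-1}\phi_{\lambda,s_0(Z)}.
\]

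Feeding this into the first display and using that $\Theta\circ\omega_{\psi}\big((\alpha,\tilde I_{n\times n})\big)$ is $\mathbb{C}$-linear and commutes with the constant endomorphism $\big(\tau(g_{21}Z+g_{22})^{\intercal}\big)^{-1}$ of $\mathcal{H}(\lambda)^{*}$ yields exactly the claimed transformation law for $\theta(\alpha,\lambda,Z)=\theta(\alpha,\tilde I_{n\times n},\lambda,Z)$. I expect the main obstacle to be the finite-place analysis: checking that $g_{21}\in DM_{n\times n}(\mathbb{Z})$ and $g_{12}\in 2M_{n\times n}(\mathbb{Z})$ are exactly (not merely sufficiently) the conditions making $\omega_{\psi_p}(\iota(s_0))$ fix $1_{\mathbb{Z}_p^{mn}}$ at every prime, with the dyadic prime and the primes dividing $D$ the delicate cases, and then bookkeeping the resulting local Weil indices into $\gamma(s_0)$. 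The archimedean step is a direct appeal to Lemma~\ref{sigwe}, and the passage between $\phi_{\lambda,Z}$ and the scalar kernel $\varphi_{f,Z}$ is purely formal.
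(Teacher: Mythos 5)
Your proposal follows the same route as the paper's own proof: insert the rational element $\iota(s_0)=s_0\otimes I_m\in SP_{2mn}(\mathbb{Q})$ via the $SP_{2mn}(\mathbb{Q})$-invariance of the theta distribution $\Theta$ (Poisson summation), commute it past $\alpha$ using that $(O_A, SP_{2n})$ is a dual pair, and then compute $\omega_\psi(\iota(s_0))\phi_{\lambda,Z}$ place by place — Lemma~\ref{sigwe} at the archimedean place, and fixing $\prod_p 1_{\mathbb{Z}_p^{mn}}$ at the finite places using the definition of $\Gamma_0^n(D)$. Your treatment of the finite places is somewhat more explicit than the paper's one-line appeal to ``the definition of $D$, $\varphi_{g,Z}$ and \eqref{schr}'' (in particular, you correctly note that the $g_{21}$ block involves a partial Fourier transform not covered by \eqref{schr}, which the paper elides), but the architecture and all the key ingredients are the same. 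One small note: for the proposition, sufficiency of the $\Gamma_0^n(D)$ conditions is all that is needed, so the ``exactly, not merely sufficiently'' check you anticipate as an obstacle is not actually required.
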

\begin{proof}
It is enough to show that for every $g\in \mathcal{H}(\lambda),$ we have 
\[
\vartheta(\alpha,\tilde{I}_{n\times n},g,Z)= \gamma(\tilde{s}) \det (g_{21}Z+g_{22})^{-m/2}   \vartheta\left(\alpha,\tilde{I}_{n\times n},({\tau(g_{21}Z+g_{22})^{\intercal}})^{-1} g, s_0Z    \right).
\]
Since $SP_{2n}$ and $O_A$ commute in $SP_{2mn}$,   $s_0 \in  \Gamma_0^n(D) \subset SP_{2n}(\mathbb{Q})$ and $\Theta$ is invariant by $\widetilde{SP}_{2mn}(\mathbb{A}_{\mathbb{Q}}),$  we have 
\[
\vartheta(\alpha,\tilde{I}_{n\times n},g,Z)= \Theta (\omega_{\psi}(s_0)\circ \omega_{\psi}(\alpha)\varphi_{g,Z})=   \Theta ( \omega_{\psi}(\alpha) \circ \omega_{\psi}(s_0)\varphi_{g,Z}).
\]
By \eqref{emb}, the image of $s_0$ inside $SP_{2mn}$ is 
\[
\begin{bmatrix} g_{11}\otimes I_{m\times m}& g_{12}\otimes A\\ g_{21}\otimes A^{-1}&g_{22}\otimes I_{m\times m} \end{bmatrix}.
\]
We write $s_0=s_0^{\infty}s_{0,\infty},$ where $s_{0,\infty} \in \widetilde{SP}_{2n}\left(\prod_p SP_{2mn}(\mathbb{Q}_p)\right) $ and $s_{0,\infty} \in \widetilde{SP}_{2n}(\mathbb{R}).  $
By the definition of $D$, $\varphi_{g,Z}$ and \eqref{schr}, it follows that $\omega_{\psi}(s_0^{\infty})\varphi_{g,Z}=\varphi_{g,Z}.$  Finally by Lemma~\ref{sigwe}, we have  
\[
\omega_{\psi}(s_{0,\infty})\varphi_{g,Z}=  \gamma(\tilde{s}_0) \det (g_{21}Z+g_{22})^{-m/2}  \varphi_{\left( \tau(g_{21}Z+g_{22})^{\intercal} \right)^{-1}g,s(Z)}.
\]
This concludes the proof of our Proposition. 
%We refer the reader to \cite[Section 2.6.18]{Vergne}.
\end{proof}
%hermitian
%metric $\langle , \rangle$ such that $\langle \tau (g)v_1, v_2\rangle = \langle v_1, \overline{\tau (g^{\intercal})}v_2\rangle$ and we shall put $\|v\| = \langle v, v\rangle^{1/2}$.
%Such a hermitian metric can always be found and is unique up to a scalar for irreducible representations.
%%
%
%
%Let $ \tau:GL_n(\mathbb{C}) \to GL( V_{\tau})$ be the finite dimensional complex representation of $GL_n(\mathbb{C}).$ A holomorphic map $f:\mathbb{H}_n\to  V_{\tau}$ is called a Siegel modular form
%of weight $\tau$ and level $\Gamma \subset SP_n(\mathbb{Z}) $ if
%\[
%f(Z)=\tau(cZ + d)^{-1} f(\gamma Z)
%\]
%for all $\gamma=\begin{bmatrix}a &b\\c&d\end{bmatrix}\in \Gamma$ and all $Z\in \mathbb{H}_n$, and  for $n=1$ the requirement
%that $f$ is holomorphic at $\infty$. 
%
Proposition~\ref{ppww} implies that  $\theta(\alpha,\lambda,Z)$ has weight $(\tau_{\pi_{\infty}}^{\intercal})^{-1}$ and level $\Gamma_0^{n}(D)\subset SP_n(\mathbb{Z}),$ where $D$ is the discriminant of $A.$

%
%The following Proposition is a generalization of Lemma~\cite[Lemma 1.2]{Shintani}
%
%
%
%In this section, we show that $\Theta(\phi_{\pi,j})(Z)$ is a 
%
%
%We cite the following result from the work of Shintani~\cite[Lemma 1.2]{Shintani}
%\begin{prop}[Shintani]\label{shint}
%Let $p(\vec{x})$ be a harmonic polynomial of degree $k$ with respect to the quadratic form $F.$ 
%We have 
%\[
%a
%\]
%
%\end{prop}
\subsection{Hecke operators and the theta lift}\label{Heckes}
In this section, we briefly explain the Hecke algebra of the orthogonal group $O_A$ and its dual pair $\widetilde{SP_{2n}}$ at the unramified  primes. We cite a result of Howe~\cite[Theorem 7.1]{Howee}, that implies the theta transfer sends the  eigenfunction of the Hecke operators of $O_A$ to the eigenfunction of the Hecke operators of $SP_{2n}$. 

Let $p$ be a prime number where $\gcd(p,D)=1.$ Let $\tilde{J}_p$ be  the maximal compact subgroup of $\widetilde{SP}_{2mn} (\mathbb{Q}_p).$ It follows that $\tilde{J}_p$ splits and is isomorphic to $\tilde{J}_p=SP_{2mn} (\mathbb{Z}_p)\times \{\pm 1\}$; see \cite[Section 3]{Howee}. Let $K_p$ and $K^{\prime}_p$ be the maximal compact subgroups of  $O_A(\mathbb{Q}_p)$ and $\widetilde{SP}_{2n} (\mathbb{Q}_p).$ Up to conjugation, we can assume that $K_p$ and $K^{\prime}_p$ contained in $J_p.$  Let $C^{\infty}_c(O_A(\mathbb{Q}_p)//K_p)$ be the (Hecke) algebra of $K$-bi-invariant functions on $O_A(\mathbb{Q}_p)$. Define
$C^{\infty}_c(SP_{2n}(\mathbb{Q}_p)//K^{\prime}_p)$ similarly. Let $I(K_p, K^{\prime}_p)$ be the vectors fixed by $\omega_{\psi}(K_p)$ and by  $\omega_{\psi}(K^{\prime}_p).$
Then $\omega_{\psi}\left( C^{\infty}_c(O_A(\mathbb{Q}_p)//K_p)\right)$ and  $\omega_{\psi}\left( C^{\infty}_c(SP_{2n}(\mathbb{Q}_p)//K^{\prime}_p)\right)$ leaves $I(K_p, K^{\prime}_p)$ invariant.  We  consider the restrictions $\omega_{\psi}\left( C^{\infty}_c(O_A(\mathbb{Q}_p)//K_p)\right)| I(K_p, K^{\prime}_p).$  We cite the following result of Howe~\cite[Theorem 7.1]{Howee}\label{howheckek}
\begin{thm}[Howe] The restrictions $\omega_{\psi}\left( C^{\infty}_c(O_A(\mathbb{Q}_p)//K_p)\right)| I(K_p, K^{\prime}_p)$ and $\omega_{\psi}\left( C^{\infty}_c(SP_{2n}(\mathbb{Q}_p)//K^{\prime}_p)\right)| I(K_p, K^{\prime}_p)$ are
the same algebra of operators.
\end{thm}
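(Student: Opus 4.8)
The plan is to deduce the statement from a single cyclicity assertion together with a formal commutant argument. Write $I:=I(K_p,K_p')$, and let $\rho$, $\rho'$ denote the restrictions to $I$ of $\omega_\psi$ on $C^\infty_c(O_A(\mathbb{Q}_p)//K_p)$ and on $C^\infty_c(SP_{2n}(\mathbb{Q}_p)//K_p')$ respectively, so that the claim is $\operatorname{Im}\rho=\operatorname{Im}\rho'$ inside $\operatorname{End}_{\mathbb C}(I)$. Two preliminary facts need no computation. First, every operator in $\operatorname{Im}\rho$ commutes with every operator in $\operatorname{Im}\rho'$: by \eqref{emb} the groups $O_A$ and $SP_{2n}$ are mutually commuting subgroups of $SP_{2mn}$, so their images under $\omega_\psi$ commute, and each spherical Hecke operator is an integral of such group operators. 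Second, each of $\operatorname{Im}\rho$ and $\operatorname{Im}\rho'$ is commutative, since the spherical Hecke algebras of $O_A(\mathbb{Q}_p)$ and of $SP_{2n}(\mathbb{Q}_p)$ at the good prime $p\nmid D$ are commutative (Satake isomorphism). Let $\phi_0\in\mathcal S(M_{m\times n}(\mathbb{Q}_p))$ be the characteristic function of $M_{m\times n}(\mathbb{Z}_p)$; as $A\in GL_m(\mathbb{Z}_p)$, this is the $SP_{2mn}(\mathbb{Z}_p)$-spherical vector and in particular lies in $I$. The crux is the cyclicity claim $\operatorname{Im}(\rho)\phi_0=I=\operatorname{Im}(\rho')\phi_0$.

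Granting this, the theorem follows at once. Put $R:=\operatorname{Im}\rho$. Because $R$ is commutative and $\phi_0$ is $R$-cyclic, the $R$-module map $R\to I$, $T\mapsto T\phi_0$, is an isomorphism: it is surjective by cyclicity, and injective because $T\phi_0=0$ forces $T(S\phi_0)=S(T\phi_0)=0$ for all $S\in R$, i.e. $T$ annihilates $R\phi_0=I$. Consequently $\operatorname{End}_R(I)\cong\operatorname{End}_R(R)=R$. Since every element of $\operatorname{Im}\rho'$ commutes with $R$, it is an $R$-module endomorphism of $I$, whence $\operatorname{Im}\rho'\subseteq\operatorname{End}_R(I)=R=\operatorname{Im}\rho$. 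Running the identical argument with $\rho$ and $\rho'$ interchanged (using that $\phi_0$ is also $\operatorname{Im}(\rho')$-cyclic) yields the reverse inclusion, and hence $\operatorname{Im}\rho=\operatorname{Im}\rho'$.

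It remains to prove the cyclicity claim, and this is where the substantive work lies. Using the Schr\"odinger-model formulas \eqref{schr} and \eqref{schrod}, one first describes $I$, for $p$ odd and $A$ unimodular over $\mathbb{Z}_p$, as the space of Schwartz functions on $M_{m\times n}(\mathbb{Q}_p)$ that are (i) bi-invariant under $O_A(\mathbb{Z}_p)\times GL_n(\mathbb{Z}_p)$, (ii) invariant under the quadratic-phase operators $\Phi(X)\mapsto\psi_p\!\big(\tfrac12\operatorname{tr}(b\,X^\intercal A X)\big)\Phi(X)$ for symmetric $b$ over $\mathbb{Z}_p$, and (iii) fixed by the partial Fourier transform attached to the Weyl element of $SP_{2n}$; such functions have a basis of characteristic functions $\mathbf 1_{\Lambda_\mu}$ of lattices indexed by elementary-divisor data $\mu$, partially ordered by dominance with $\mu=0$ giving $\phi_0$. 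One then checks that the $O_A$-Hecke operator attached to the Cartan double coset of invariant $\mu$ sends $\phi_0$ to $\mathbf 1_{\Lambda_\mu}$ plus a combination of $\mathbf 1_{\Lambda_{\mu'}}$ with $\mu'$ strictly below $\mu$, so that $\operatorname{Im}(\rho)\phi_0=I$ by triangularity; symmetrically, the Siegel-parabolic Hecke operators on the $SP_{2n}$ side act on each $\mathbf 1_\Lambda$ by a lattice summation and again generate all of $I$ from $\phi_0$. \textbf{The main obstacle is precisely this triangularity} — pinning down the correct basis of $I$ and extracting the leading term of each Hecke operator applied to $\phi_0$, which is the unramified local computation underlying the theta correspondence. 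An alternative route that avoids the combinatorics is to invoke unramified Howe duality for representations — multiplicity-freeness of $I$ for the joint action together with the explicit matching of Satake parameters under $\Theta$ — after which both algebras are seen to equal the full algebra of operators diagonal in the common eigenbasis. Either way this is Howe's theorem, and we simply cite \cite[Theorem 7.1]{Howee}.
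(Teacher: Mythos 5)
The paper does not attempt to prove this statement; it is stated and cited directly from Howe \cite[Theorem 7.1]{Howee}, exactly as your proposal does in its final sentence. The commutant reduction you supply --- mutual commutation of the two spherical Hecke algebra actions (since $O_A$ and $SP_{2n}$ commute inside $SP_{2mn}$), their commutativity via Satake, cyclicity of the unramified Schwartz function $\phi_0$ under each, and the deduction $\operatorname{Im}\rho' \subseteq \operatorname{End}_R(I)=R=\operatorname{Im}\rho$ --- is formally correct and is indeed the skeleton of Howe's own argument, but the cyclicity/triangularity claim for $\phi_0$ is the entire mathematical content of the theorem and you leave it to the citation. So in substance your proposal takes the same route as the paper (citation of Howe); the added outline is an accurate account of what that citation provides, not an independent proof.
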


Suppose that  $\phi_{\pi}$ is a smooth function which belongs to the automorphic irreducible representation of  $\pi$ of  $L^2\big(O_A(\mathbb{Q})\backslash O_{F}(\mathbb{A}_{\mathbb{Q}})\big)$ and is invariant by $K_p$ and $\pi_{\infty}=\lambda.$ We define 
\begin{equation}\label{irre}
\Theta(\phi_{\pi})(\lambda, Z):=\int_{O_A(\mathbb{Q})\backslash O_{A}(\mathbb{A}_{\mathbb{Q}})} \theta(\alpha,\lambda,Z)\overline{\phi_{\pi}}(\alpha) d\mu(\alpha).
\end{equation}
\begin{prop}\label{Heckeosc}
$\Theta(\phi_{\pi})(Z,f)$ is an eigenfunction of  $C^{\infty}_c(SP_{2n}(\mathbb{Q}_p)//K^{\prime}_p).$
\end{prop}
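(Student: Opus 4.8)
## Proof proposal for Proposition~\ref{Heckeosc}

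The plan is to reduce the claim to Howe's theorem (Theorem~\ref{howheckek}) by moving a Hecke operator from the $SP_{2n}$ side, through the oscillator representation, onto the $O_A$ side, where $\phi_\pi$ is an eigenvector by hypothesis. Concretely, fix a prime $p$ with $\gcd(p,D)=1$ and let $T' \in C^\infty_c(SP_{2n}(\mathbb{Q}_p)//K'_p)$ act on the $Z$-variable of $\Theta(\phi_\pi)(\lambda,Z)$ via its action on the theta kernel; I will write this action explicitly as convolution against $\theta(\alpha,\tilde s,\lambda,Z)$ in the $\tilde s$-variable and then specialize $\tilde s = \tilde I_{n\times n}$. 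First I would record that, since $K_p$ and $K'_p$ both sit inside $\tilde J_p$ and the vectors $\omega_\psi(K_p)$-fixed and $\omega_\psi(K'_p)$-fixed (i.e.\ the space $I(K_p,K'_p)$) are preserved by both Hecke algebras, the operator $\omega_\psi(T')$ restricted to $I(K_p,K'_p)$ equals $\omega_\psi(T)$ for some $T \in C^\infty_c(O_A(\mathbb{Q}_p)//K_p)$ — this is exactly Theorem~\ref{howheckek}. The Schwartz function $\varphi_{f,Z}$ entering $\theta$ is $\prod_p 1_{\mathbb{Z}_p}$ at every finite place coprime to $D$, hence lies in the relevant $K_p$-fixed and $K'_p$-fixed space, so the exchange applies to it.

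The key computational step is the adjoint/unfolding manipulation. Writing
\[
\Theta(\phi_\pi)(\lambda,Z) = \int_{O_A(\mathbb{Q})\backslash O_A(\mathbb{A}_{\mathbb{Q}})} \theta(\alpha,\lambda,Z)\,\overline{\phi_\pi}(\alpha)\, d\mu(\alpha),
\]
I apply $T'$ (acting on $Z$ through $\omega_\psi$) under the integral sign, use $\omega_\psi(T') = \omega_\psi(T)$ on the theta kernel, and then transfer $\omega_\psi(T)$ — which acts by right translation in the $\alpha$-variable composed with the Hecke convolution on $O_A(\mathbb{Q}_p)$ — onto $\overline{\phi_\pi}$ by the self-adjointness (up to the standard involution $g \mapsto g^{-1}$) of the $O_A$-Hecke algebra with respect to the invariant measure $d\mu$. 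Since $\phi_\pi$ generates $\pi$ and is $K_p$-fixed with $\pi_p$ unramified, $T$ acts on $\phi_\pi$ by a scalar $\lambda_\pi(T)$; the same scalar (possibly after applying the involution, which acts by $T \mapsto T^\vee$ and hence by the eigenvalue of the contragredient, still a scalar) comes back out. Therefore $T'\cdot \Theta(\phi_\pi)(\lambda,Z) = c(T')\,\Theta(\phi_\pi)(\lambda,Z)$ with $c(T')$ the scalar obtained by running $T'$ through the Howe correspondence $T' \leftrightarrow T$ and then through $\pi_p$. This proves $\Theta(\phi_\pi)(\lambda,Z)$ is a $C^\infty_c(SP_{2n}(\mathbb{Q}_p)//K'_p)$-eigenfunction.

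The main obstacle I anticipate is making the ``$T'$ acts on $Z$'' step precise. The theta lift as written depends on $Z \in \mathbb{D}$, not visibly on $\tilde s \in \widetilde{SP}_{2n}(\mathbb{A}_{\mathbb{Q}})$; I must first lift the $Z$-dependence to the adelic group via $\theta(\alpha,\lambda,Z) = \theta(\alpha,\tilde s_\infty,\lambda,i)$-type identities (Lemma~\ref{sigwe}) so that a finite-place Hecke operator $T'$ at $p$ makes sense acting on $\tilde s$, and then check that because $\varphi_{f,Z}$ is the characteristic function of $\mathbb{Z}_p^d$ at $p$ (so $K'_p$-invariant there), acting by $T'$ at $p$ on the adelic argument and then restoring $\tilde s_p = 1$ reproduces a genuine operator on $Z$-functions. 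A secondary technical point is tracking the involution $g \mapsto g^{-1}$ under the adjoint move and confirming it only changes the eigenvalue to that of $\tilde\pi_p$, which is still a scalar — so the eigenfunction property is unaffected. I would also remark that the automorphy factor $\gamma(s_0)\det(g_{21}Z+g_{22})^{-m/2}(\tau(\cdots)^{\intercal})^{-1}$ from Proposition~\ref{ppww} plays no role here since $T'$ is supported at a finite place $p \nmid D$ and commutes with the archimedean weight action, so it passes through untouched.
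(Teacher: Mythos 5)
Your argument is correct and follows the same route the paper intends: the paper's own proof is a one‑line citation of Theorem~\ref{howheckek} (Howe's identification of the two restricted Hecke algebras on $I(K_p,K'_p)$) together with a pointer to Howe's Proposition~2.3, and your write‑up simply spells out the standard unfolding/adjoint computation behind that citation, including the key observations that the local test vector $1_{\mathbb{Z}_p^{mn}}$ lies in $I(K_p,K'_p)$ for $p\nmid D$ and that the involution $g\mapsto g^{-1}$ only replaces the Hecke eigenvalue by that of the contragredient, leaving the eigenvector property intact.
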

\begin{proof}
This is a consequence of of Theorem~\ref{howheckek}. See also Howe~\cite[Proposition  2.3]{Howe} for more details.
\end{proof}

%
%By the result of Howe~\cite[Theorem 7.1]{Howee}, which implies $\Theta(\phi_{\pi,j})(Z,f)$ is a Hecke eigenfunction for primes which do not divide $D$. 
%\begin{prop}
%In the correspondence \eqref{theta}, all $\tau^{\prime}$ in $\mathcal{R}(G^{\prime}(k)\tilde{G}^{\prime}(A))$
%corresponding to a given $\tau \in \mathcal{R} (G(k)\ \tilde{G}(A))$ belong to the same near equivalence
%class. Furthermore this near equivalence class depends only on the near equivalence
%class of $\tau$.
%\end{prop}
%

 \subsection{Weyl's sums and the Fourier coefficient of the theta lift}\label{weight} Let $B\in Hom(W,W^{*})(\mathbb{Z})$ be a positive symmetric definite matrix $B^{\intercal}=B.$ Recall 
 \[
 N=\left\{\begin{bmatrix}I_m & n\\ 0&I_m  \end{bmatrix}\Big| n: W^* \otimes E \to W\otimes E\  \text{ and }  n=n^{\intercal} \right\} \subset SP_{2n}.
 \]  
 Note that by definition~\ref{congs},  $\Theta(\phi_{\pi,j})(\lambda, Z)$ is invariant by sending $Z$ to $Z+2n$ where $n\in N(\mathbb{Z}).$ 
 We define the $B$-th Fourier coefficient of $\Theta(\phi_{\pi})(Z,f)$ which is an element of $\mathcal{H}(\lambda)^*$ as follows:
  \begin{equation}\label{thetas}
 \Theta(\phi_{\pi,j})(\lambda, B):=\exp\left(-i\pi tr(ZB) \right) \int_{N(\mathbb{Q}) \backslash N(A_{\mathbb{Q}})} \Theta(\phi_{\pi})(\lambda, Z+2n) \psi\left(-tr(nB) \right) dn.
  \end{equation}
  Recall the Weyl sums $W(\phi_{\pi,j},B)$  and  $p_{\lambda,\vec{E}_B}\in \mathcal{H}(\lambda)$ defined in \eqref{Wweyl} and  \eqref{sphpoly} respectively.
 \begin{thm}\label{weylfourier}
 We have
 \[
\langle \Theta(\phi_{\pi,j})(\lambda, B), p_{\lambda,\vec{E}_B} \rangle =W(\phi_{\pi,j},B).
 \]
 \end{thm}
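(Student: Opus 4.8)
The plan is to expand $\Theta(\phi_{\pi,j})(\lambda,Z)$ from its definition \eqref{irre}, pair the result against $p_{\lambda,\vec{E}_B}$, extract the $B$-th Fourier coefficient via \eqref{thetas}, and then unfold the resulting adelic integral against $\overline{\phi_{\pi,j}}$ until it becomes the Weyl sum \eqref{Wweyl}. First I would use that $\langle\theta(\alpha,\tilde{I}_{n\times n},\lambda,Z),g\rangle=\vartheta(\alpha,\tilde{I}_{n\times n},g,Z)$ for $g\in\mathcal{H}(\lambda)$, so that pairing \eqref{irre} with $p_{\lambda,\vec{E}_B}$ gives an integral of $\vartheta(\alpha,\tilde{I}_{n\times n},p_{\lambda,\vec{E}_B},Z)$ against $\overline{\phi_{\pi,j}}(\alpha)$ over $O_A(\mathbb{Q})\backslash O_A(\mathbb{A}_{\mathbb{Q}})$. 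Applying the Schr\"odinger-model formula \eqref{schrod} (with $\gamma(\tilde{I})=1$ and trivial determinant for the $O_A$-factor) and the $O_A(\mathbb{R})$-invariance of $\vec{X}\mapsto\vec{X}^{\intercal}A\vec{X}$, one obtains
\[
\vartheta(\alpha,\tilde{I}_{n\times n},p_{\lambda,\vec{E}_B},Z)=\sum_{a\in M_{m\times n}(\mathbb{Q})}\exp\!\big(i\pi\,\mathrm{tr}(Z\,a^{\intercal}Aa)\big)\,p_{\lambda,\vec{E}_B}(\alpha_{\infty}^{-1}a)\prod_{p}1_{\mathbb{Z}_p}(\alpha_p^{-1}a).
\]

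Next I would run the Fourier extraction of \eqref{thetas}: the shift $Z\mapsto Z+2n$ multiplies the $a$-term by $\exp(2\pi i\,\mathrm{tr}(n\,a^{\intercal}Aa))$, integrating against $\psi(-\mathrm{tr}(nB))$ over the compact group $N(\mathbb{Q})\backslash N(\mathbb{A}_{\mathbb{Q}})$ annihilates every term with $a^{\intercal}Aa\neq B$ by orthogonality of characters, and the prefactor $\exp(-i\pi\,\mathrm{tr}(ZB))$ cancels $\exp(i\pi\,\mathrm{tr}(Z\,a^{\intercal}Aa))$. The surviving constraint $\prod_p 1_{\mathbb{Z}_p}(\alpha_p^{-1}a)=1$ says precisely that the columns of $a$ lie in the lattice $M_{\alpha}\subset\mathbb{Q}^m$ determined by $M_{\alpha}\otimes\mathbb{Z}_p=\alpha_p\mathbb{Z}_p^m$, i.e. $(a,M_{\alpha})\in\mathcal{S}_B$, so
\[
\langle\Theta(\phi_{\pi,j})(\lambda,B),p_{\lambda,\vec{E}_B}\rangle=\int_{O_A(\mathbb{Q})\backslash O_A(\mathbb{A}_{\mathbb{Q}})}\Big(\sum_{a:\,(a,M_{\alpha})\in\mathcal{S}_B}p_{\lambda,\vec{E}_B}(\alpha_{\infty}^{-1}a)\Big)\overline{\phi_{\pi,j}}(\alpha)\,d\mu(\alpha).
\]

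Then I would unfold. By Lemma~\ref{invp} the function $X\mapsto p_{\lambda,\vec{E}_B}(\alpha_\infty^{-1}X)$ is right $O_{A,\vec{E}}(\mathbb{R})$-invariant, $M_{\alpha}$ is right $O_A(\prod_p\mathbb{Z}_p)$-invariant, and the whole integrand is left $O_A(\mathbb{Q})$-invariant (the $O_A(\mathbb{Q})$-action permutes the inner sum), so the integrand descends to $O_A(\mathbb{Q})\backslash\mathcal{L}_{A,B}$. Viewing the inner sum as a sum over those $\mathcal{Q}\in\mathcal{S}_B$ whose lattice component equals that of the running point, the standard unfolding of an $O_A(\mathbb{Q})$-sum against an $O_A(\mathbb{Q})$-quotient integral turns the expression into $\sum_{\mathcal{Q}\in O_A(\mathbb{Q})\backslash\mathcal{S}_B}\frac{1}{|O_{\mathcal{Q}}(\mathbb{Q})|}\int_{V_{A,B}(\mathbb{R})}p_{\lambda,\vec{E}_B}(g_{\vec{Y}}^{-1}a_0)\,\overline{\phi_{\pi,j}}\big((\vec{Y},L_0)\big)\,d\mu(\vec{Y})$, where $(a_0,L_0)$ is a representative of $\mathcal{Q}$, $g_{\vec{Y}}\vec{E}_B=\vec{Y}$, and the weight $1/|O_{\mathcal{Q}}(\mathbb{Q})|$ arises from the stabilizer together with the measure normalizations fixed in Proposition~\ref{lememl}.

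Finally I would evaluate the archimedean integral by the reproducing property \eqref{sphpoly}. Writing $a_0=h_0\vec{E}_B$ and using the $O_A(\mathbb{R})$-invariance of the reproducing kernel $K$ of $\mathcal{H}(\lambda)\subset L^2(V_{A,B}(\mathbb{R}))$ gives $p_{\lambda,\vec{E}_B}(g_{\vec{Y}}^{-1}a_0)=K(a_0,\vec{Y})=p_{\lambda,a_0}(\vec{Y})$; since $\phi_{\pi,j}|_{(\,\cdot\,,L_0)}=:q_0$ lies in $\mathcal{H}(\lambda)$ by Theorem~\ref{weightdis}, \eqref{sphpoly} yields $\int_{V_{A,B}(\mathbb{R})}p_{\lambda,a_0}(\vec{Y})q_0(\vec{Y})\,d\mu(\vec{Y})=q_0(a_0)=\phi_{\pi,j}(\mathcal{Q})$ (the harmonic polynomials occurring may be taken real-valued, so the conjugation is immaterial; equivalently one runs the whole argument with $\overline{\phi_{\pi,j}}$, which does not affect \eqref{mainform}). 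Summing over $\mathcal{Q}$ then gives $\sum_{\mathcal{Q}\in O_A(\mathbb{Q})\backslash\mathcal{S}_B}\phi_{\pi,j}(\mathcal{Q})/|O_{\mathcal{Q}}(\mathbb{Q})|=W(\phi_{\pi,j},B)$. I expect the main obstacle to be this last step together with the bookkeeping in the unfolding: correctly identifying the ``spherical'' vector $p_{\lambda,\vec{E}_B}$ produced by the Siegel theta kernel with the point-evaluation functional on $\mathcal{H}(\lambda)$, and tracking the stabilizer factors and measure normalizations so that the weights $1/|O_{\mathcal{Q}}(\mathbb{Q})|$ come out exactly; by contrast the Fourier extraction and the descent to $O_A(\mathbb{Q})\backslash\mathcal{L}_{A,B}$ are routine.
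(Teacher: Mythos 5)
Your proposal follows essentially the same route as the paper's proof, and the key steps are all correct: expanding $\vartheta(\alpha,\tilde{I},p_{\lambda,\vec{E}_B},Z)$ in the Schr\"odinger model, performing the Fourier extraction over $N(\mathbb{Q})\backslash N(\mathbb{A}_{\mathbb{Q}})$ with character orthogonality to cut $a^{\intercal}Aa=B$, observing that $\prod_p 1_{\mathbb{Z}_p}(\alpha_p^{-1}a)$ pins down membership in $\mathcal{S}_B$, unfolding over the genus-class decomposition of $O_A(\mathbb{Q})\backslash\mathcal{L}_{A,B}$, and finally evaluating the archimedean integral by the reproducing property \eqref{sphpoly} together with the invariance and $g\leftrightarrow g^{-1}$ symmetry of $p_{\lambda,\vec{E}_B}$ from Lemma~\ref{invp}. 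The only organizational difference is the order of the last two steps: the paper writes $O_A(\mathbb{Q})\backslash\mathcal{L}_{A,B}=\bigcup_i\{(L_i,\vec{X}):\vec{X}\in O_{L_i}\backslash V_{A,B}(\mathbb{R})\}$, pulls out the factor $1/|O_{L_i}|$ from the measure normalization, and evaluates the integral $\int_{O_A(\mathbb{R})/O_{A,\vec{E}}}\phi_{\pi,j}((L_i,g\vec{E}))\,p_{\lambda,\vec{E}_B}(g^{-1}\vec{H})\,dg=\phi_{\pi,j}((L_i,\vec{H}))$ \emph{before} regrouping into the Weyl sum, whereas you regroup into $\sum_{\mathcal{Q}\in O_A(\mathbb{Q})\backslash\mathcal{S}_B}1/|O_{\mathcal{Q}}(\mathbb{Q})|$ first and then apply the reproducing kernel $K(a_0,\vec{Y})=p_{\lambda,a_0}(\vec{Y})$; these are the same computation. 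You also correctly flag a small tension in the paper (the complex conjugate on $\phi_{\pi,j}$ in \eqref{irre} silently disappears in the last lines of the paper's proof) and resolve it the same way one would have to (real-valued harmonics, or carry the bar, with no effect on \eqref{mainform}). So this is a correct proof by essentially the paper's own method, not a genuinely different route.
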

\begin{proof}
By Lemma~\ref{sigwe}, we have
\(
\varphi_{ f,Z+2n}= \omega_{\psi}(2n)   \varphi_{f,Z}.
\) By \eqref{schr}, we have 
  \[
   \varphi_{ f,Z+2n}=\omega_{\psi}(2n)  \varphi_{f,Z}=  \psi\left(\langle  \alpha^{-1}\vec{ H} , n(\alpha^{-1}\vec{ H})  \rangle \right)   \varphi_{f,Z} \]
  Therefore, 
 
\begin{gather*}
\begin{split}
 \langle \Theta(\phi_{\pi,j})(\lambda, B), p_{\lambda,\vec{E}_B} \rangle= \exp\left(-i\pi tr(ZB) \right)  \int_{N(\mathbb{Q}) \backslash N(A_{\mathbb{Q}})} \int_{O_A(\mathbb{Q})\backslash O_A(\mathbb{A}_{\mathbb{Q}})} \sum_{\vec{H}\in Hom(W,E)(\mathbb{Q})}
 \\
 \psi\left(\langle  \alpha^{-1}\vec{ H} , n(\alpha^{-1}\vec{ H})  \rangle -tr(nB)\right)   \varphi_{p_{\lambda,\vec{E}_B},Z}(\alpha^{-1}\vec{H}) \bar{\phi}_{\pi,i}(\alpha) dn d\alpha,
 \end{split}
\end{gather*}
where 
 \[
 \varphi_{p_{\lambda,\vec{E}_B},Z}(\vec{X}_{\infty},\prod_p{\vec{X}_p}):=
 \exp\left(i\pi tr(Z\vec{X}_{\infty}^{\intercal}A\vec{X}_{\infty}) \right)p_{\lambda,\vec{E}_B}(\vec{X}_{\infty})\prod_{p} 1_{\mathbb{Z}_p}(\vec{X}_p).
 \]
 Note that $\langle  \alpha^{-1}\vec{ H} , n(\alpha^{-1}\vec{ H})  \rangle=  tr(n \vec{ H}^{\intercal}({\alpha^{-1}})^{\intercal}A\alpha^{-1}\vec{H} )=  tr(n \vec{ H}^{\intercal}A\vec{H} ) $ is independent of $\alpha.$ Moreover, $ \varphi_{p_{\lambda,\vec{E}_B},Z}(\alpha^{-1}\vec{H}) = 0$ unless $\alpha^{-1}\vec{H} \in Hom(W,E)(\prod_p \mathbb{Z}_p)$, which implies $ \vec{ H}^{\intercal}A\vec{H} \in Hom(W,E)(\mathbb{Z}).$ By the orthogonality of the additive character $\psi,$ for $ \varphi_{p_{\lambda,\vec{E}_B},Z}(\alpha^{-1}\vec{H})\neq 0 $ we have 
 \[
  \int_{N(\mathbb{Q}) \backslash N(A_{\mathbb{Q}})} \psi\left(\langle  \alpha^{-1}\vec{ H} , n(\alpha^{-1}\vec{ H})  \rangle -tr(nB)\right) dn=\delta(\vec{ H}^{\intercal}A\vec{H}=B).
 \]
 where 
 \[
\delta(X,Y)=\begin{cases}
1 &\text{ if } X=Y
\\ 
0 &\text{ otherwise.}
\end{cases}
 \]
 Therefore, 
\begin{gather*}
\begin{split}
 \langle \Theta(\phi_{\pi,j})(\lambda, B), p_{\lambda,\vec{E}_B} \rangle=    \int_{O_A(\mathbb{Q})\backslash O_A(\mathbb{A}_{\mathbb{Q}})} \sum_{\vec{H}\in V_{A,B}(\mathbb{Q})}
  \varphi_{p_{\lambda,\vec{E}_B},Z}(\alpha^{-1}\vec{H}) \bar{\phi}_{\pi,i}(\alpha) \prod_{p} 1_{\mathbb{Z}_p}(\alpha^{-1}\vec{H}_p) d\alpha.
 \end{split}
\end{gather*}
 Recall that $\phi_{\pi,i}$ is defined on  $ O_A(\mathbb{Q})\backslash O_A(\mathbb{A}_{\mathbb{Q}})/O_{A,\vec{E}}(\mathbb{R})O_A(\prod_p \mathbb{Z}_p).$ By Lemma~\ref{invp}, $\varphi_{p_{\lambda,\vec{E}_B},Z}(\alpha^{-1}\vec{H}),$ as a function of $\alpha$, is also defined on $ O_A(\mathbb{A}_{\mathbb{Q}})/O_{A,\vec{E}}(\mathbb{R})O_A(\prod_p \mathbb{Z}_p).$  We identify  $ O_A(\mathbb{Q})\backslash O_A(\mathbb{A}_{\mathbb{Q}})/O_{A,\vec{E}}(\mathbb{R})O_A(\prod_p \mathbb{Z}_p)$ with $O_A(\mathbb{Q})\backslash \mathcal{L}_{A,B}.$ Recall from~\ref{adelicsec} $O_A(\mathbb{Q})\backslash \mathcal{L}_{A,B}=\cup_{i=1}^{h} \{(L_i,\vec{X}): \vec{X}\in O_{L_i}\backslash V_{A,B}(\mathbb{R})\}.$  Hence,  
  \begin{equation*}
 \begin{split}
 &\int_{O_A(\mathbb{Q})\backslash O_A(\mathbb{A}_{\mathbb{Q}})} \sum_{\vec{H}\in V_{A,B}(\mathbb{Q})} \varphi_{p_{\lambda,\vec{E}_B},Z}(\alpha^{-1}\vec{H}) \bar{\phi}_{\pi,i}(\alpha) \prod_{p} 1_{\mathbb{Z}_p}(\alpha^{-1}\vec{H}_p) d\alpha
 \\
 &= \sum_{L_i}\frac{1}{|O_{L_i}|} \sum_{\vec{H}\in V_{A,B}(\mathbb{Q})}\int_{O_A(\mathbb{R})/O_{A,\vec{E}}} 
 \phi_{\pi,j}((L_i,g \vec{E}))  p_{\lambda,\vec{E}_B}( g^{-1}\vec{H}) \prod_{p} 1_{\mathbb{Z}_p}(\alpha_i^{-1}\vec{H}_p) dg,
 \end{split}
 \end{equation*}
 where we have; see~\eqref{lattice}
 $$\prod_{p} 1_{\mathbb{Z}_p}(\alpha_i^{-1}\vec{H}_p)\begin{cases}
  1 &\text{ if }  H\in  \mathcal{S}_{L_i,B}
  \\
  0 &\text{ otherwise.}
  \end{cases}
$$
 Therefore,

 \begin{equation*}
 \begin{split}
 \langle \Theta(\phi_{\pi,j})(\lambda, B), p_{\lambda,\vec{E}_B} \rangle=   \sum_{L_i}\frac{ 1}{|O_{L_i}|} \sum_{\vec{H}\in  \mathcal{S}_{L_i,B}}\int_{O_A(\mathbb{R})/O_{A,\vec{E}}} 
 \phi_{\pi,j}((L_i,g \vec{E}))  p_{\lambda,\vec{E}_B}( g^{-1}\vec{H}) dg.
 \end{split}
 \end{equation*}
  Since $\vec{H}\in V_{A,B}(\mathbb{Q}),$ there exists  $\alpha_{\vec{H}}\in O_A(\mathbb{R})/O_{A,\vec{E}}$ such that  $\alpha_{\vec{H}}\vec{E}=\vec{H}.$
By the symmetry of $p_{\lambda,\vec{E}_B}$ proved in  Lemma~\ref{invp}, we have  
\begin{equation*}
\begin{split}
\int_{O_A(\mathbb{R})/O_{A,\vec{E}}} 
 \phi_{\pi,j}((L_i,g \vec{E}))  p_{\lambda,\vec{E}_B}( g^{-1}\vec{H}) dg&= \int_{O_A(\mathbb{R})/O_{A,\vec{E}}} 
 \phi_{\pi,j}((L_i,g \vec{E}))  p_{\lambda,\vec{E}_B}( g^{-1}\alpha_{\vec{H}}\vec{E}) dg
 \\
 &= \int_{O_A(\mathbb{R})/O_{A,\vec{E}}}  \phi_{\pi,j}((L_i,\alpha_{\vec{H}}g \vec{E}))  p_{\lambda,\vec{E}_B}( g^{-1}\vec{E}) dg
 \\
  &= \int_{O_A(\mathbb{R})/O_{A,\vec{E}}}  \phi_{\pi,j}((L_i,\alpha_{\vec{H}}g \vec{E}))  p_{\lambda,\vec{E}_B}( g\vec{E}) dg
  \\
 &= \phi_{\pi,j}((L_i,\alpha_{\vec{H}}\vec{E})) =\phi_{\pi,j}((L_i,\vec{H})).
\end{split}
\end{equation*}
Therefore,
\begin{equation*}
 \begin{split}
 \langle \Theta(\phi_{\pi,j})(\lambda, B), p_{\lambda,\vec{E}_B} \rangle&=   \sum_{L_i}\frac{1}{|O_{L_i}|} \sum_{\vec{H}\in  \mathcal{S}_{L_i,B}}\phi_{\pi,j}((L_i,\vec{H}))
 =W(\phi_{\pi,j},B).
 \end{split}
 \end{equation*}
This concludes the proof of our theorem. 
\end{proof}

\section{Proof of Theorem~\ref{Siegelthmm}}\label{final}
\begin{proof}
By Propositions~\ref{lememl} and~\ref{Siegelvarp}, we have 
\[
\va(B,r)=\sum_{\pi} \sum_{j=1}^{d_{\pi}} |h_{r}(\pi_{\infty})|^2 |W(\phi_{\pi,j},B)|^2.
\]
By Theorem~\ref{weylfourier} and Lemma~\ref{invp}, we have 
\begin{equation}\label{plam}
\begin{split}
W(\phi_{\pi,j},B)= \langle \Theta(\phi_{\pi,j})(\lambda, B), p_{\lambda,\vec{E}_B} \rangle&=  \langle \Theta(\phi_{\pi,j})(\lambda, B), \tau(\sqrt{B})^{-1}p_{\lambda,\vec{E}} \rangle 
\\
&=\langle  {\tau(\sqrt{B})^{\intercal}}^{-1} \Theta(\phi_{\pi,j})(\lambda, B),p_{\lambda,\vec{E}} \rangle. 
\end{split}
\end{equation}
This concludes the proof of Theorem~\ref{Siegelthmm}. 
\end{proof}

\bibliographystyle{alpha}
\bibliography{revised}

\begin{thebibliography}{{Sar}15a}

\bibitem[BR88]{Raghavan}
S.~B\"{o}cherer and S.~Raghavan.
\newblock On {F}ourier coefficients of {S}iegel modular forms.
\newblock {\em J. Reine Angew. Math.}, 384:80--101, 1988.

\bibitem[BRS17]{Bourgain}
J.~Bourgain, Z.~Rudnick, and P.~Sarnak.
\newblock Spatial statistics for lattice points on the sphere {I}: {I}ndividual
  results.
\newblock {\em Bull. Iranian Math. Soc.}, 43(4):361--386, 2017.

\bibitem[CS99]{Conwayb}
J.~H. Conway and N.~J.~A. Sloane.
\newblock {\em Sphere packings, lattices and groups}, volume 290 of {\em
  Grundlehren der Mathematischen Wissenschaften [Fundamental Principles of
  Mathematical Sciences]}.
\newblock Springer-Verlag, New York, third edition, 1999.
\newblock With additional contributions by E. Bannai, R. E. Borcherds, J.
  Leech, S. P. Norton, A. M. Odlyzko, R. A. Parker, L. Queen and B. B. Venkov.

\bibitem[Duk88]{Duke}
W.~Duke.
\newblock Hyperbolic distribution problems and half-integral weight {M}aass
  forms.
\newblock {\em Invent. Math.}, 92(1):73--90, 1988.

\bibitem[EMV13]{Ellenberg}
Jordan~S. Ellenberg, Philippe Michel, and Akshay Venkatesh.
\newblock Linnik's ergodic method and the distribution of integer points on
  spheres.
\newblock In {\em Automorphic representations and {$L$}-functions}, volume~22
  of {\em Tata Inst. Fundam. Res. Stud. Math.}, pages 119--185. Tata Inst.
  Fund. Res., Mumbai, 2013.

\bibitem[Gel79]{Gelbart}
Stephen Gelbart.
\newblock Examples of dual reductive pairs.
\newblock In {\em Automorphic forms, representations and {$L$}-functions
  ({P}roc. {S}ympos. {P}ure {M}ath., {O}regon {S}tate {U}niv., {C}orvallis,
  {O}re., 1977), {P}art 1}, Proc. Sympos. Pure Math., XXXIII, pages 287--296.
  Amer. Math. Soc., Providence, R.I., 1979.

\bibitem[GGN13]{GGN}
Anish Ghosh, Alexander Gorodnik, and Amos Nevo.
\newblock Diophantine approximation and automorphic spectrum.
\newblock {\em Int. Math. Res. Not. IMRN}, (21):5002--5058, 2013.

\bibitem[GGN15]{GGN1}
Anish Ghosh, Alexander Gorodnik, and Amos Nevo.
\newblock Diophantine approximation exponents on homogeneous varieties.
\newblock In {\em Recent trends in ergodic theory and dynamical systems},
  volume 631 of {\em Contemp. Math.}, pages 181--200. Amer. Math. Soc.,
  Providence, RI, 2015.

\bibitem[GGN16]{GGN2}
Anish Ghosh, Alexander Gorodnik, and Amos Nevo.
\newblock Best possible rates of distribution of dense lattice orbits in
  homogeneous spaces.
\newblock {\em J. reine angew. Math.}, 2016.

\bibitem[Han04]{Hanke}
Jonathan Hanke.
\newblock Some recent results about (ternary) quadratic forms.
\newblock In {\em Number theory}, volume~36 of {\em CRM Proc. Lecture Notes},
  pages 147--164. Amer. Math. Soc., Providence, RI, 2004.

\bibitem[How79]{Howee}
R.~Howe.
\newblock {$\theta $}-series and invariant theory.
\newblock In {\em Automorphic forms, representations and {$L$}-functions
  ({P}roc. {S}ympos. {P}ure {M}ath., {O}regon {S}tate {U}niv., {C}orvallis,
  {O}re., 1977), {P}art 1}, Proc. Sympos. Pure Math., XXXIII, pages 275--285.
  Amer. Math. Soc., Providence, R.I., 1979.

\bibitem[HPS83]{Howe}
Roger Howe and I.~I. Piatetski-Shapiro.
\newblock Some examples of automorphic forms on {${\rm Sp}_{4}$}.
\newblock {\em Duke Math. J.}, 50(1):55--106, 1983.

\bibitem[Iwa87]{Henryk}
Henryk Iwaniec.
\newblock Fourier coefficients of modular forms of half-integral weight.
\newblock {\em Invent. Math.}, 87(2):385--401, 1987.

\bibitem[Kit84]{Kitaoka2}
Yoshiyuki Kitaoka.
\newblock Fourier coefficients of {S}iegel cusp forms of degree two.
\newblock {\em Nagoya Math. J.}, 93:149--171, 1984.

\bibitem[Kit86]{Kitaoka}
Y.~Kitaoka.
\newblock {\em Lectures on {S}iegel modular forms and representation by
  quadratic forms}, volume~77 of {\em Tata Institute of Fundamental Research
  Lectures on Mathematics and Physics}.
\newblock Published for the Tata Institute of Fundamental Research, Bombay; by
  Springer-Verlag, Berlin, 1986.

\bibitem[Koh04]{Winfried}
Winfried Kohnen.
\newblock On the growth of {F}ourier coefficients of certain special {S}iegel
  cusp forms.
\newblock {\em Math. Z.}, 248(2):345--350, 2004.

\bibitem[KS93]{Katok}
Svetlana Katok and Peter Sarnak.
\newblock Heegner points, cycles and {M}aass forms.
\newblock {\em Israel J. Math.}, 84(1-2):193--227, 1993.

\bibitem[KV78]{Kashiwara}
M.~Kashiwara and M.~Vergne.
\newblock On the {S}egal-{S}hale-{W}eil representations and harmonic
  polynomials.
\newblock {\em Invent. Math.}, 44(1):1--47, 1978.

\bibitem[LV80]{Vergne}
G\'{e}rard Lion and Mich\`ele Vergne.
\newblock {\em The {W}eil representation, {M}aslov index and theta series},
  volume~6 of {\em Progress in Mathematics}.
\newblock Birkh\"{a}user, Boston, Mass., 1980.

\bibitem[PS18]{Parzanchevski}
Ori Parzanchevski and Peter Sarnak.
\newblock Super-golden-gates for {$PU(2)$}.
\newblock {\em Adv. Math.}, 327:869--901, 2018.

\bibitem[RS98]{Sloane}
E.~M. Rains and N.~J.~A. Sloane.
\newblock The shadow theory of modular and unimodular lattices.
\newblock {\em J. Number Theory}, 73(2):359--389, 1998.

\bibitem[Sar05]{conjectures}
Peter Sarnak.
\newblock Notes on the generalized {R}amanujan conjectures.
\newblock In {\em Harmonic analysis, the trace formula, and {S}himura
  varieties}, volume~4 of {\em Clay Math. Proc.}, pages 659--685. Amer. Math.
  Soc., Providence, RI, 2005.

\bibitem[{Sar}15a]{Optimal}
N.~T {Sardari}.
\newblock {Optimal strong approximation for quadratic forms}.
\newblock {\em Accepted for publication at Duke Math Journal}, October 2015.

\bibitem[Sar15b]{Sarnak2}
Peter Sarnak.
\newblock {\em {Letter to Scott Aaronson and Andy Pollington on the
  Solovay-Kitaev Theorem}}, February 2015.

\bibitem[{Sar}17]{Sardaric}
N.~T {Sardari}.
\newblock {Complexity of strong approximation on the sphere}.
\newblock {\em ArXiv e-prints}, March 2017.

\bibitem[Shi73]{Shimura}
Goro Shimura.
\newblock On modular forms of half integral weight.
\newblock {\em Ann. of Math. (2)}, 97:440--481, 1973.

\bibitem[Shi75]{Shintani}
Takuro Shintani.
\newblock On construction of holomorphic cusp forms of half integral weight.
\newblock {\em Nagoya Math. J.}, 58:83--126, 1975.

\bibitem[Sie44]{Siegel}
Carl~Ludwig Siegel.
\newblock On the theory of indefinite quadratic forms.
\newblock {\em Ann. of Math. (2)}, 45:577--622, 1944.

\bibitem[Sie63]{Siegel1}
Carl~Ludwig Siegel.
\newblock {\em Lectures on the analytical theory of quadratic forms}.
\newblock Notes by Morgan Ward. Third revised edition. Buchhandlung Robert
  Peppm\"{u}ller, G\"{o}ttingen, 1963.

\bibitem[Ven13]{Akshay}
Akshay Venkatesh.
\newblock A note on sphere packings in high dimension.
\newblock {\em Int. Math. Res. Not. IMRN}, (7):1628--1642, 2013.

\bibitem[Wei64]{Weil}
Andr\'{e} Weil.
\newblock Sur certains groupes d'op\'{e}rateurs unitaires.
\newblock {\em Acta Math.}, 111:143--211, 1964.

\bibitem[Wei65]{Weil2}
Andr\'{e} Weil.
\newblock Sur la formule de {S}iegel dans la th\'{e}orie des groupes
  classiques.
\newblock {\em Acta Math.}, 113:1--87, 1965.

\bibitem[Wri33]{Wright}
E.~M. Wright.
\newblock The representation of a number as a sum of five or more squares (ii).
\newblock {\em Quart. J. Math. Oxford}, 4:37--51 ; 228--232., 1933.

\bibitem[Wri37]{Wright2}
E.~M. Wright.
\newblock The representation of a number as a sum of four almost equal squares.
\newblock {\em Quart. J. Math. Oxford}, 8:278--279., 1937.

\end{thebibliography}

\end{document}